\title{Side-to-side Tiling of the Sphere by Congruent Curvilinear Triangles}
\author{Keyi Jin, East China Normal University \\
	Linming Lu, Chinese University of Hong Kong \\
	Erxiao Wang\thanks{Corresponding author (wang.eric@zjnu.edu.cn).  Research was supported by National Natural Science Foundation of China NSFC-RGC 12361161603 and Key Projects of Zhejiang Natural Science Foundation LZ22A010003.}, Zhejiang Normal University \\
	Lijuan Wu, Jinhua No.4 Middle School \\
	Min Yan\thanks{Research was supported by NSFC-RGC Joint Research Scheme N-HKUST607/23 and Hong Kong RGC General Research Fund 16310925.}, 
	Hong Kong University of Science and Technology}
\newcommand{\mc}{\mathcal}
\newcommand*\circled[1]{\tikz[baseline=(char.base)]{
		\node[shape=circle,draw,inner sep=0.5pt] (char) {#1};}}
\newtheorem{theorem}{Theorem}
\newtheorem{lemma}[theorem]{Lemma}
\newtheorem{proposition}[theorem]{Proposition}
\newtheorem*{theorem*}{Theorem}
\theoremstyle{definition}
\newtheorem*{definition*}{Definition}
\newtheorem*{case*}{Case}
\newtheorem*{subcase*}{Subcase}
\theoremstyle{remark}
\numberwithin{equation}{section}
\begin{document}
\date{}
\maketitle

\begin{abstract}
The edge-to-edge tilings of the sphere by congruent polygons, where all edges are straight, have been completely classified. We classify the curvilinear version of the similar triangular tilings, where the edges may not be straight, and find that these are the modifications of the straight triangular tilings.
	
	{\it Keywords}:
	spherical tiling,  curvilinear polygon, Platonic solid, subdivision, earth map tiling.
	
	{\it 2020 MR Subject Classification}:	52C20, 05B45.
\end{abstract}

\section{Introduction}

Isometry or congruence is defined for objects on surfaces of constant Gaussian curvature, and we may consider tilings by congruent polygons on such surfaces. Here a polygon has corners and sides, and the interior is homeomorphic to an open disk. However, we do not assume the sides are straight (i.e., geodesic). We call such a polygon {\em curvilinear}. 

In this paper, we classify {\em side-to-side} tilings of the sphere by congruent curvilinear triangles. Figure \ref{bird} is an example of such tiling. In the literature, side-to-side tilings are usually called {\em edge-to-edge} tilings. For tilings by congruent curvilinear polygons, however, we need to distinguish edges and sides. Since the concept of sides characterises the curvilinear property, side-to-side is more precise than edge-to-edge. Moreover, to simplify the discussion, we also assume that all vertices in a tiling have degree $\ge 3$.

\begin{figure}[h]
	\centering
	\includegraphics[width=0.3\textwidth]{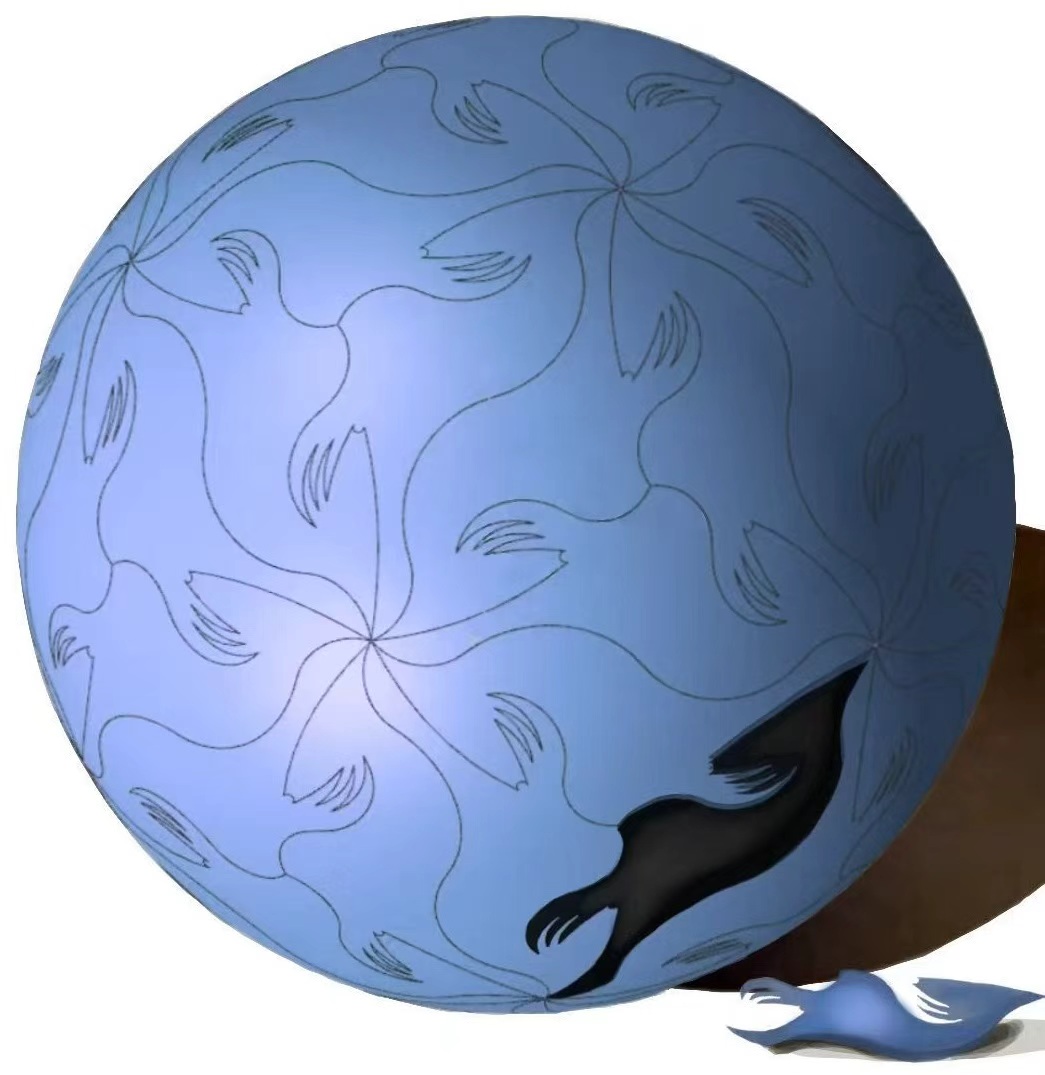}
\caption{A tiling of the sphere by a bird-like curvilinear triangle.}
\label{bird}
\end{figure}

The classification of side-to-side tilings of the sphere by congruent straight (i.e., non-curvilinear) triangles was started a century ago by Sommerville \cite{so}, further developed by Davies \cite{dav} in 1967, and completed by Ueno and Agaoka \cite{ua} in 2002. Cheung, Luk and Yan gave a modern treatment of the classification in \cite{cly}. In fact, side-to-side tilings of the sphere by congruent straight polygons (i.e., including quadrilateral and pentagonal tilings) have been completely classified \cite{wy1, wy2, awy, lw1, lw2, lw3, lwy, cly, cly5, slw,llhw}. We have also made initial progress on the classification of non-side-to-side tilings by congruent straight polygons \cite{clw}. We would also like to mention the recently completed classification of tilings of sphere by (straight) regular polygons \cite{johnson, zal, aehj}. 

While tremendous progress has been made in the research on tilings by straight polygons, curvilinear tilings is also an exciting and worthwhile research direction. Many paintings by the popular Dutch artist Escher are basically curvilinear tilings (see \cite{gs, a1}). Many tilings in nature and engineering are curvilinear. One such example is the tilings book by Heesch and Kienzle
\cite{hk} for engineers. An interesting mathematical question is whether there are curvilinear tilings that are essentially different from straight tilings. In this paper, we classify the simplest case of curvilinear triangular tilings of the sphere and find no new tilings.

\begin{theorem*} 
Side-to-side tilings of the sphere by congruent curvilinear triangles, such that all vertices have degree $\ge 3$, are modifications of side-to-side tilings of the sphere by congruent straight triangles.
\end{theorem*}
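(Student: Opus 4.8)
The plan is to show that any such curvilinear tiling $\mathcal{T}$ can be \emph{straightened} --- replace every edge-curve by the geodesic arc with the same two endpoints --- and that the result $\mathcal{T}^{\mathrm{st}}$ is a genuine side-to-side tiling of the sphere by congruent straight triangles; then $\mathcal{T}$ is a modification of $\mathcal{T}^{\mathrm{st}}$ essentially by definition. First I would record the combinatorics: since the prototile is a triangle, the tiling is side-to-side, and every vertex has degree $\ge 3$, the tiling is a CW-decomposition of $S^2$ in which each face has exactly three vertices and three edges, giving $2E=3F$, $V=\tfrac{F}{2}+2$, $F\ge 4$. Applying Gauss--Bonnet to each tile yields $\mathrm{Area}(T_i)=\alpha+\beta+\gamma-\pi-\int_{\partial T_i}\kappa_g\,ds$, and since each edge is traversed in opposite senses by its two tiles the total geodesic curvature cancels; hence $F(\alpha+\beta+\gamma-\pi)=4\pi$, exactly the angle-sum relation of the straight theory.

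Two local facts should drive the straightening. (i) A congruence between two tiles is an isometry of $S^2$ carrying corners to corners, hence preserving the three pairwise geodesic distances between corners; so all straightened triangles share the same side lengths and are mutually congruent by the spherical side-side-side criterion, \emph{provided} the straightened prototile is non-degenerate. (ii) At a vertex $v$, write each incident curvilinear angle as the corresponding straight angle plus the signed deviations of its two edge-curves from their chords at $v$; summing around $v$, the curvilinear angles add to $2\pi$ and each edge's deviation occurs once with each sign, so the straight angles at $v$ also add to $2\pi$.

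Assuming non-degeneracy, I would then build a continuous $f\colon S^2\to S^2$ that is the identity on vertices, sends each edge-curve onto its chord, and restricts to a homeomorphism of each curvilinear tile onto its straightening. By (ii) the straightened angular sectors close up with total angle exactly $2\pi$ at every vertex, so $f$ is a local homeomorphism near each vertex; it is obviously one off the $1$-skeleton, and along the edges this then follows by continuity. Hence $f$ is a local homeomorphism of $S^2$, therefore a covering map, therefore a homeomorphism. So the straightened triangles tile $S^2$ side-to-side, they are mutually congruent by (i), and --- since the three corners of the prototile do not lie on a common great circle, the placing isometries are determined by the corners --- the edge-curves of $\mathcal{T}$ arise from the edges of $\mathcal{T}^{\mathrm{st}}$ by a re-curving compatible with those isometries. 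That is exactly what it means for $\mathcal{T}$ to be a modification of $\mathcal{T}^{\mathrm{st}}$.

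The hard part is the non-degeneracy hypothesis, and this is where I expect the bulk of the work to go. One must exclude that the three corners of the prototile lie on a common great circle (then each straightened tile degenerates to an arc of that circle; the identity in (ii) forces every straight angle into $\{0,\pi\}$, and a short count against $V=\tfrac F2+2$, $F\ge4$ gives a contradiction), that two corners are antipodal (so some geodesic distance equals $\pi$; ruled out similarly), and --- most delicately --- that straightening scrambles the cyclic order of the edges at some vertex, making the straight angles there sum to a higher multiple of $2\pi$. Controlling this last point is equivalent to restricting which anglewise vertex types can occur, which is precisely the combinatorial heart of the straight-triangle classification; redoing that analysis in the curvilinear setting should reproduce exactly the known list of straight triangular tilings --- Platonic, subdivided, and earth-map --- together with the admissible ways of curving their edges, and the theorem follows.
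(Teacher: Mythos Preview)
Your approach is genuinely different from the paper's, and it is worth saying clearly what each does. The paper never straightens anything. Instead it first classifies the sixteen possible curvilinear prototiles by the symmetry types of their three sides (general, $h$-symmetric, $r$-symmetric, straight), using the observation that $g,\bar g$ and $h,\bar h$ must occur in pairs. For each of the sixteen types it then runs a purely combinatorial argument: which corners can meet at a vertex, how fans decompose a vertex, what the angle-sum lemma forces, and so on. The output is, type by type, an explicit list of tilings, each visibly a curved version of a known straight tiling. Your route --- replace every edge by its chord and argue that the result is a valid straight tiling --- is more conceptual, and points (i) and (ii) are correct and pleasant. If the straightening argument could be closed, it would give a shorter and more uniform proof than the paper's case analysis, at the cost of giving less information (the paper's method also tells you \emph{which} curvilinear types realise \emph{which} straight tilings, and in particular that $B_\triangle P_n$, $E_\triangle 3$, $FE_\triangle 3$, $F'E_\triangle 1$ admit no curvilinear versions).

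That said, there is a real gap, and you have correctly located it yourself. You need the three corners of the prototile not to lie on a common great circle and no two of them to be antipodal; otherwise the chord triangle is degenerate and the map $f$ cannot be built. You also need, when the chord triangle is non-degenerate, to choose the correct one of the two complementary spherical regions bounded by the three arcs (spherical SSS is only unambiguous once you fix the region), and you need the straightened sectors at each vertex to appear in the same cyclic order as the curvilinear ones so that $f$ is honestly a local homeomorphism and not merely has the right total angle. Your last paragraph proposes to handle these by ``redoing that analysis in the curvilinear setting,'' but that is exactly the paper's classification: once you are enumerating the possible vertex types and angle values to rule out degeneracy and order-scrambling, you are doing the case-by-case work anyway, and the straightening step is no longer saving anything. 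So as written the proposal is a strategy with the hard step deferred, not a proof; to make it one you would need an \emph{a priori} argument (not relying on the classification) that the corner configuration of a curvilinear prototile in a sphere tiling is never degenerate.
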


The details of the theorem are given in the sequence of propositions in Section \ref{tiling}. There are sixteen curvilinear triangles suitable for tiling. They are listed in Figure \ref{triangle}, from most curvilinear to almost straight. The more curvilinear the triangle is, the more rigid the tilings are. This is illustrated by the earlier propositions in Section \ref{tiling}, where the tilings are always the Platonic type. The less curvilinear the triangle is, the more flexible the tilings are. This is illustrated by the later propositions, where the tilings are closer to tilings by straight triangles, including earth map tilings and their modifications. 

In \cite{ua, cly}, the side-to-side tilings of the sphere by congruent straight triangles are classified as follows: Platonic solids $P_n$ with triangular faces ($n=4,8,20$), triangular subdivisions $T_{\triangle}P_n$ and barycentric subdivisions $B_{\triangle}P_n$ of Platonic solids ($n=4,6,8,12,20$), simple triangular subdivisions $S_{\triangle}P_6$ of the cube, three families of earth map tilings $E_{\triangle}n$ ($n=1,2,3$, and including special cases $E_{\triangle}^I1$ and $E_{\triangle}^J1$), and flip modifications $FB_{\triangle}P_8$, $FE_{\triangle}1$, $F'E_{\triangle}1$, $FE_{\triangle}2$, $FE_{\triangle}3$. See Section 2 of \cite{cly} for further detailed explanations of these constructions. It turns out there are no curvilinear versions of barycentric subdivisions $B_{\triangle}P_n$, the third earth map tiling $E_{\triangle}3$, and its flip modification, and the special flip modification $F'E_{\triangle}1$. 

For the remaining tilings, we summerise the triangular types (in Figure \ref{triangle}) in Table \ref{alltiling}. We note that some flip modifications $FE_{\triangle}$ become rotation modifications $RE_{\triangle}$ in the table. The reason is that the flip modifications for straight triangular tilings can also be interpreted as rotation modifications. Due to the rigidity of curvilinear edges, sometimes only the flip or the rotation interpretation can be applied to curvilinear tilings. 

\renewcommand{\arraystretch}{1.2}

\begin{table}[htp]
\centering
\begin{tabular}{|c|c|c|} 
\hline
tiling
& triangle \\
\hline  \hline 
\multirow{2}{*}{$P_4$}
& $g\bar{g}r$, $g\bar{g}a$, $rr^{-1}{\color{red} r'}$,  $rr^{-1}a$,  $r{\color{red} r'}{\color{blue} r''}$, $r{\color{red} r'}a$, \\
\cline{2-2}
& $ra{\color{red} a'}$, $h\bar{h}r$, $h\bar{h}a$, $rr{\color{red} r'}$, $rra$, $raa$  \\
\hline
$P_n$, $n=4,8,20$
& $rrr$, $rrr^{-1}$ \\
\hline
\multirow{2}{*}{$T_{\triangle}P_n$, $n=4,6,8,12,20$}
& $g\bar{g}^{-1}r$, $g\bar{g}^{-1}a$,   \\
\cline{2-2}
& $h\bar{h}r$, $h\bar{h}a$, $rr{\color{red} r'}$, $rra$, $raa$  \\
\hline
$S_{\triangle}P_6$ 
& $h\bar{h}r$, $h\bar{h}a$, $rr{\color{red} r'}$, $rra$, $raa$ \\
\hline
$E_{\triangle}1$ 
&  $ra{\color{red} a'}$ \\
\hline
$FE_{\triangle}1$
& $ra{\color{red} a'}$ \\
\hline
$E_{\triangle}^I1$, $E_{\triangle}^J1$, $E_{\triangle}2$, $RE_{\triangle}^J1$, $RE_{\triangle}2$
&  $h\bar{h}r$, $h\bar{h}a$, $rr{\color{red} r'}$, $rra$, $raa$ \\
\hline
$FE_{\triangle}^I1$
& $h\bar{h}r$, $h\bar{h}a$ \\
\hline
$RE_{\triangle}^I1$
& $rr{\color{red} r'}$, $rra$, $raa$ \\
\hline
\end{tabular}
\caption{Tilings of the sphere by congruent curvilinear triangles.}
\label{alltiling}
\end{table}

\section{Curvilinear Triangle}

Consider a side-to-side tiling of a surface, such that all tiles are congruent to a curvilinear polygon $P$, called {\em prototile}. The prototile has {\em sides} and {\em corners}. They are usually called {\em edges} and {\em vertices}, but actually carry extra meaning. A side $s$ of $P$ is not just a curve in the boundary of $P$, but also includes the side of the curve that is inside $P$. The particular side of the curve is illustrated by the shaded region on the left of Figure \ref{general1}. A corner is the meeting place of two compatible sides. 

\begin{figure}[htp]
\centering
\begin{tikzpicture}[>=latex,scale=1]


\fill[gray!50]
	(0,0) to[out=80, in=-20] (0,1.4) -- (0.2,1.4) to[out=-30, in=80] (0.2,0);

\draw
	(0,0) to[out=80, in=-20] (0,1.4);

\draw[dashed]
	(0,1.4) -- (1.2,1.4) arc (90:-90:0.5 and 0.7) -- (0,0);

\node at (0.35,0.7) {\small $s$};
\node at (0,0.7) {\small $\bar{s}$};
\node at (1,0.7) {\small $P$};


\begin{scope}[xshift=5cm]

\foreach \a in {0,1}
\draw[xshift=1.4*\a cm]
	(0,0) to[out=80, in=-20] (0,1.4);

\draw[dashed]
	(1.4,1.4) -- (-1.2,1.4) arc (90:270:0.5 and 0.7) -- (1.4,0);
	
\node at (0.35,0.7) {\small $s$};
\node at (0,0.7) {\small $\bar{s}$};
\node at (1.4,0.7) {\small $\bar{s}$};
\node at (0.8,0.3) {\small tile 1};
\node at (-0.7,0.3) {\small tile 2};

\end{scope}

\end{tikzpicture}
\caption{Curvilinear side.}
\label{general1}
\end{figure}
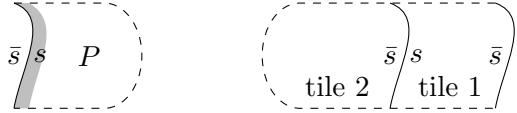

The curve part of a side $s$ an {\em edge} $e$. We denote the other side of $e$ by $\bar{s}$. In a {\em side-to-side} tiling, $s$ belongs to a tile, and $\bar{s}$ belong to an adjacent tile. Since both tiles are congruent to $P$, both $s$ and $\bar{s}$ are sides of $P$. See the right of Figure \ref{general1}. If $s$ and $\bar{s}$ are distinct, in the sense that there is no self isometry of the edge $e$ that sends one side to the other side, then $s$ and $\bar{s}$ must be different sides of $P$. If $s$ and $\bar{s}$ are the same, in the sense there is such isometry, then we write $s=\bar{s}$. In this case, the side $s$ can be glued to itself in the tiling.

There are three nontrivial isometries that can be applied to an edge $e$, such that the set of two end points is preserved: the horizontal flip $e^h$, the vertical flip $e^v$, and the rotation $e^r$, given by Figure \ref{transform}. Since the horizontal flip can be regarded as the reverse of direction, we also write $e^h$ as $e^{-1}$. 

\begin{figure}[htp]
\centering
\begin{tikzpicture}[>=latex,scale=1]

\foreach \a in {1,-1}
\foreach \b in {1,-1}
\draw[xscale=\a,yscale=\b]
	(-1.5,0) to[out=30, in=100] (1.5,0);

\draw[dotted]
	(-2.2,0) -- (2.2,0)
	(0,1.3) -- (0,-1.3);

\fill
	(1.5,0) circle (0.1)
	(-1.5,0) circle (0.1);

\node at (1,0.8) {$e$};
\node at (1,-0.8) {$e^v$};
\node at (-1,0.85) {$e^h$};
\node at (-1,-0.8) {$e^r$};

\node at (2.4,0) {$v$};
\node at (0,1.5) {$h$};

\end{tikzpicture}
\caption{A general curvilinear edge and its isometric transformations.}
\label{transform}
\end{figure}
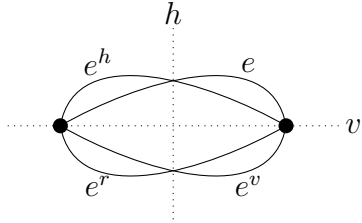

The isometry can also be applied to a side $s$ of the edge $e$, and $s\ne s^{-1}$ means $s\ne s^h$. There are four types of edges and correspondingly, four types of sides.
\begin{enumerate}
\item general: $e,e^v,e^h,e^r$ are all distinct. We have $s\ne\bar{s}$, and $s\ne s^{-1}$.
\item $h$-symmetric: $e=e^h\ne e^r$. We have $s\ne\bar{s}$, and $s=s^{-1}$.
\item $r$-symmetric: $e=e^r\ne e^h$. We have $s=\bar{s}$, and $s\ne s^{-1}$.
\item straight: $e=e^v$. This implies $e,e^v,e^h,e^r$ are the same. We have $s=\bar{s}$, and $s=s^{-1}$.
\end{enumerate}

Figure \ref{type} gives examples of the four types of edges, and the schematic ways of drawing them in the pictures. We denote the four types of sides by $g$ (general), $h$ ($h$-symmetric), $r$ ($r$-symmetric), $a$ (straight arc). We use $g^{-1},r^{-1}$ to indicate the reverse direction, and we do not use $h^{-1},a^{-1}$. We also use $g,\bar{g}$ and $h,\bar{h}$ to denote the other sides of $g$ and $h$, and we do not use $\bar{r},\bar{a}$.

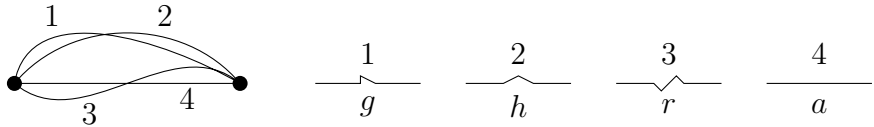
\begin{figure}[htp]
\centering
\begin{tikzpicture}[>=latex,scale=1]

\draw
	(-1.5,0) -- (1.5,0)
	(-1.5,0) to[out=80, in=150] (1.5,0)
	(-1.5,0) to[out=50, in=130] (1.5,0)
	(-1.5,0) to[out=-40, in=140] (1.5,0)
	;

\fill
	(1.5,0) circle (0.1)
	(-1.5,0) circle (0.1);

\node at (-1,0.9) {1};
\node at (0.5,0.9) {2};
\node at (-0.5,-0.4) {3};
\node at (0.8,-0.2) {4};


\begin{scope}[xshift=2.5cm]

\draw
	(0,0) -- (0.6,0) -- (0.6,0.1) -- (0.8,0) -- (1.4,0);

\draw[xshift=2cm]
	(0,0) -- (0.5,0) -- (0.7,0.1) -- (0.9,0) -- (1.4,0);

\draw[xshift=4cm]
	(0,0) -- (0.5,0) -- (0.6,-0.1) -- (0.8,0.1) -- (0.9,0) -- (1.4,0);
	
\draw
	(6,0) -- ++(1.4,0);

\node at (0.7,0.4) {1};
\node at (2.7,0.4) {2};
\node at (4.7,0.4) {3};
\node at (6.7,0.4) {4};

\node at (0.7,-0.3) {$g$};
\node at (2.7,-0.3) {$h$};
\node at (4.7,-0.3) {$r$};
\node at (6.7,-0.3) {$a$};

\end{scope}	

\end{tikzpicture}
\caption{Four types of edges and their schematic drawings.}
\label{type}
\end{figure}

In schematic drawings, it is hard to visually see the distinction between $r$ and $r^{-1}$. We add a circle to the picture, as in Figure \ref{r_opposite}, to emphasise the distinction. 

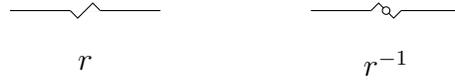
\begin{figure}[htp]
\centering
\begin{tikzpicture}[>=latex]


\begin{scope}
[decoration={markings,mark=at position 0.5 with {\draw[white,thick](0.2,0)--(-0.2,0);
\draw (0.2,0)--(0.1,0.1)--(-0.1,-0.1)--(-0.2,0);}}]

\draw[postaction={decorate}]
	(-1,0) -- (1,0);
											
\end{scope}
	
\node at (0,-0.7) {$r$};	


\begin{scope}[xshift=4cm]

\begin{scope}
[decoration={markings,mark=at position 0.5 with {\draw[white,thick](0.2,0)--(-0.2,0);
\draw (0.2,0)--(0.1,-0.1)--(-0.1,0.1)--(-0.2,0);
\filldraw[fill=white] (0,0) circle (0.05);}}]

\draw[postaction={decorate}]
	(-1,0) -- (1,0);
											
\end{scope}
	
\node at (0,-0.7) {$r^{-1}$};	

\end{scope}

\end{tikzpicture}
\caption{Add a circle to emphasise $r^{-1}$.}
\label{r_opposite}
\end{figure}

If a side $s$ is general or $h$-symmetric, then $s\ne \bar{s}$, and cannot be glued to itself. This implies $s$ and $\bar{s}$ appear in pairs as sides of the prototile $P$, and we conclude the following.

\begin{lemma}
In a side-to-side tiling by congruent curvilinear polygons, $g$ and $\bar{g}$ appear the same number of times in the prototile, and $h$ and $\bar{h}$ appear the same number of times in the prototile. 
\end{lemma}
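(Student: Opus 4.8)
The plan is to reduce the statement to a simple double-counting (or ``handshake'') argument about how sides are glued to one another in the tiling. The key observation, already extracted in the paragraph preceding the lemma, is that a general side $g$ satisfies $g\neq\bar g$ and an $h$-symmetric side $h$ satisfies $h\neq\bar h$, so neither type of side can be glued to itself. Hence in a side-to-side tiling every edge $e$ whose two sides are of type $g$ (resp. $h$) is shared by two distinct tiles: one tile contributes the side $g$ (resp. $h$) and the adjacent tile contributes the side $\bar g$ (resp. $\bar h$).

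First I would fix notation: let $N$ be the total number of tiles, and for a side symbol $x\in\{g,\bar g,h,\bar h\}$ let $m_x$ denote the number of times $x$ occurs among the sides of the prototile $P$ (this is well defined since all tiles are congruent to $P$). The total number of occurrences of $x$ across the whole tiling is then $N\,m_x$. Next I would count, in two ways, the set of (edge, incident tile) pairs at edges of $g$-type. On one hand, each such edge is incident to exactly two tiles, one presenting side $g$ and the other presenting $\bar g$; so the number of occurrences of $g$ over the whole tiling equals the number of such edges, and likewise the number of occurrences of $\bar g$ equals the number of such edges. Therefore $N\,m_g = N\,m_{\bar g}$, and dividing by $N$ gives $m_g = m_{\bar g}$. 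The identical argument with $h$ in place of $g$ gives $m_h = m_{\bar h}$, which is exactly the assertion of the lemma.

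The only subtlety to address carefully — and the step I expect to require the most attention — is making rigorous the claim that at an edge of $g$-type the two adjacent tiles present \emph{precisely} the pair $\{g,\bar g\}$ (one each), rather than, say, both presenting $g$. This follows from the definition of side-to-side tiling together with the classification of edge types in the excerpt: if $s$ is the side presented by one tile along edge $e$, then the side presented by the adjacent tile is, by definition, $\bar s$; and for a general side $s=g$ we have $\bar s=\bar g\neq g$, so the two contributions are genuinely distinct and the pairing is a bijection between ``occurrences of $g$'' and ``edges of $g$-type''. One should also note that no edge of $g$-type can lie on a boundary or be glued to itself (the sphere has no boundary, and $g\neq\bar g$ rules out self-gluing), so the count has no exceptional terms. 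With that in hand the equalities $m_g=m_{\bar g}$ and $m_h=m_{\bar h}$ are immediate, completing the proof.
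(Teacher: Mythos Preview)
Your proof is correct and follows essentially the same idea as the paper: the paper's argument is the single sentence preceding the lemma, namely that since a general or $h$-symmetric side $s$ satisfies $s\neq\bar s$, the side $s$ cannot be glued to itself, so $s$ and $\bar s$ must appear in pairs. You have simply made the implicit double-counting explicit by introducing $N$, $m_x$, and counting occurrences across all tiles; this is exactly the content of ``appear in pairs,'' written out formally.
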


As a consequence of the lemma, there are sixteen curvilinear triangles. They are listed in Figure \ref{triangle}, from the most curvilinear to almost straight.

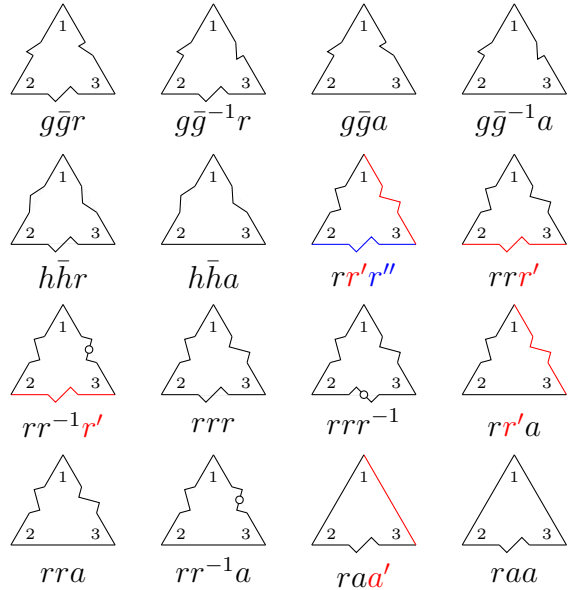
\begin{figure}[htp]
\centering
\begin{tikzpicture}[>=latex,scale=1]

\foreach \a in {0,1,2,3}
\foreach \b in {0,1,2,3}
{
\begin{scope}[xshift=2*\a cm, yshift=-2*\b cm]
	
\node at (90:0.5) {\tiny 1};
\node at (210:0.5) {\tiny 2};
\node at (-30:0.5) {\tiny 3};

\end{scope}
}
		

\begin{scope}
[decoration={markings,mark=at position 0.5 with {
\draw[white, thick] (0.1,0)--(-0.1,0);
\draw (0.1,0)--(-0.1,0.1)--(-0.1,0);
}}]

\foreach \a in {1,3}
\draw[postaction={decorate}, xshift=2*\a cm]
	(-30:0.8) -- (90:0.8);

\foreach \a in {0,1,2,3}
\draw[postaction={decorate}, xshift=2*\a cm]
	(210:0.8) -- (90:0.8);

\end{scope}	


\begin{scope}[
decoration={markings,mark=at position 0.5 with {
\draw[white, thick] (0.1,0)--(-0.1,0);
\draw (0.1,0)--(-0.1,-0.1)--(-0.1,0);}}]

\foreach \a in {0,2}
\draw[postaction={decorate}, xshift=2*\a cm] 
	(90:0.8) -- (-30:0.8);
 
\end{scope}


\begin{scope}
[decoration={markings,mark=at position 0.5 with {
\draw[white, thick] (0.2,0)--(-0.2,0);
\draw (0.2,0)--(0,0.1)--(-0.2,0);}}]

\foreach \a in {0,1}
{
\draw[postaction={decorate}, xshift=2*\a cm, yshift=-2 cm]
	(-30:0.8) -- (90:0.8);
\draw[postaction={decorate}, xshift=2*\a cm, yshift=-2 cm]
	(210:0.8) -- (90:0.8);
}

\end{scope}


\begin{scope}
[decoration={markings,mark=at position 0.5 with {
\draw[white, thick] (0.2,0)--(-0.2,0);
\draw (0.2,0)--(0.1,0.1)--(-0.1,-0.1)--(-0.2,0);}}]

\foreach \a/\b in {0/0,1/0,0/1,1/2,2/3,3/3}
\draw[postaction={decorate}, xshift=2*\a cm, yshift=-2*\b cm]
	(210:0.8) -- (-30:0.8);

\draw[postaction={decorate}, blue, xshift=4 cm, yshift=-2 cm]
	(210:0.8) -- (-30:0.8);
	
\foreach \a/\b in {3/1,0/2}
\draw[postaction={decorate}, red, xshift=2*\a cm, yshift=-2*\b cm]
	(210:0.8) -- (-30:0.8);
		
\foreach \a/\b in {1/2,3/1,2/2,0/3}	
\draw[postaction={decorate}, xshift=2*\a cm, yshift=-2*\b cm]
	(-30:0.8) -- (90:0.8);

\foreach \a/\b in {2/1}	
\draw[postaction={decorate}, red, xshift=2*\a cm, yshift=-2*\b cm]
	(-30:0.8) -- (90:0.8);
			
\foreach \a/\b in {2/1,3/1,0/2,1/2,2/2,3/2,0/3,1/3}	
\draw[postaction={decorate}, xshift=2*\a cm, yshift=-2*\b cm]
	(90:0.8) -- (210:0.8);
	
\draw[postaction={decorate}, red, xshift=6 cm, yshift=-4 cm]
	(-30:0.8) -- (90:0.8);
					
\end{scope}	


\begin{scope}
[decoration={markings,mark=at position 0.5 with {\draw[white,thick](0.2,0)--(-0.2,0);
\draw (0.2,0)--(0.1,-0.1)--(-0.1,0.1)--(-0.2,0);
\filldraw[fill=white] (0,0) circle (0.05);}}]

\foreach \a/\b in {0/2,1/3}	
\draw[postaction={decorate}, xshift=2*\a cm, yshift=-2*\b cm]
	(90:0.8) -- (-30:0.8);

\draw[postaction={decorate}, xshift=4 cm, yshift=-4 cm]
	(210:0.8) -- (-30:0.8);
	
\end{scope}		


\foreach \a/\b in {2/0,3/0,1/1,3/2,0/3,1/3}	
\draw[shift={(2*\a cm, -2*\b cm)}]
	(-30:0.8) -- (210:0.8);

\draw[shift={(4 cm, -6 cm)}, red]
	(90:0.8) -- (-30:0.8);

\draw[shift={(6 cm, -6 cm)}]
	(90:0.8) -- (-30:0.8);
	
\foreach \a/\b in {2/3,3/3}	
\draw[shift={(2*\a cm, -2*\b cm)}]
	(90:0.8) -- (210:0.8);
	
\node at (0,-0.8) {$g\bar{g}r$};
\node at (2,-0.75) {$g\bar{g}^{-1}r$};	
\node at (4,-0.8) {$g\bar{g}a$};	
\node at (6,-0.75) {$g\bar{g}^{-1}a$};		

\node at (0,-2.8) {$h\bar{h}r$};
\node at (2,-2.8) {$h\bar{h}a$};	
\node at (4,-2.8) {$r{\color{red} r'}{\color{blue} r''}$};
\node at (6,-2.8) {$rr{\color{red} r'}$};	

\node at (0,-4.8) {$rr^{-1}{\color{red} r'}$};
\node at (2,-4.8) {$rrr$};	
\node at (4,-4.75) {$rrr^{-1}$};
\node at (6,-4.8) {$r{\color{red} r'}a$};	

\node at (0,-6.8) {$rra$};
\node at (2,-6.73) {$rr^{-1}a$};	
\node at (4,-6.8) {$ra{\color{red} a'}$};
\node at (6,-6.8) {$raa$};

\end{tikzpicture}
\caption{Curvilinear triangles suitable for tiling.}
\label{triangle}
\end{figure}

We remark that, with the exception of $rrr$ and $rrr^{-1}$, the edge $\overline{23}$ (connecting corners $2$ and $3$) in Figure \ref{triangle} can be distinguished from the other edges. Then the tiles form {\em companion pairs} that share the edge $\overline{23}$. The union of the pair is a quadrilateral, and the triangular tiling can be regarded as a quadrilateral tiling. Conversely, the triangular tiling is obtained from the quadrilateral tiling by using diagonals to cut individual quadrilaterals into companion pairs of triangles.

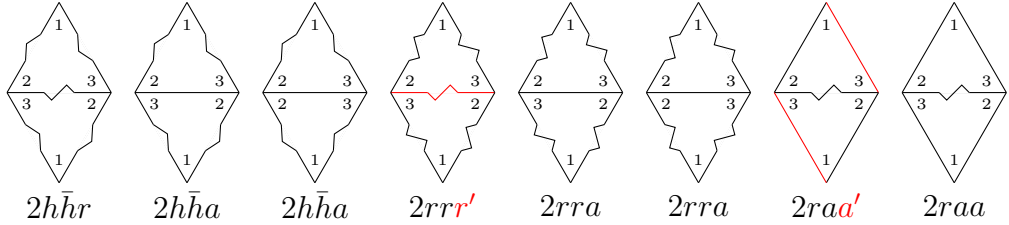
\begin{figure}[htp]
\centering
\begin{tikzpicture}[>=latex]


\begin{scope}
[decoration={markings,mark=at position 0.5 with {
\draw[white, thick] (0.2,0)--(-0.2,0);
\draw (0.2,0)--(0,0.1)--(-0.2,0);}}]

\foreach \a in {0,1,2}
\draw[postaction={decorate}, shift={(1.7*\a cm,0.4cm)}]
	(-30:0.8) -- (90:0.8);
	
\foreach \a in {0,1,2}	
\draw[postaction={decorate}, shift={(1.7*\a cm,0.4cm)}]
	(210:0.8) -- (90:0.8);
	
\foreach \a in {0,1}
\draw[postaction={decorate}, shift={(1.7*\a cm,-0.4cm)}, scale=-1]
	(-30:0.8) -- (90:0.8);
	
\foreach \a in {0,1}	
\draw[postaction={decorate}, shift={(1.7*\a cm,-0.4cm)}, scale=-1]
	(210:0.8) -- (90:0.8);

\foreach \a in {2}
\draw[postaction={decorate}, shift={(1.7*\a cm,-0.4cm)}, scale=-1]
	(90:0.8) -- (-30:0.8);
	
\foreach \a in {2}	
\draw[postaction={decorate}, shift={(1.7*\a cm,-0.4cm)}, scale=-1]
	(90:0.8) -- (210:0.8);
					
\end{scope}	


\begin{scope}
[decoration={markings,mark=at position 0.5 with {
\draw[white,thick](0.2,0)--(-0.2,0);
\draw (0.2,0)--(0.1,0.1)--(-0.1,-0.1)--(-0.2,0);}}]

\foreach \a in {0,6,7}
\draw[postaction={decorate}, shift={(1.7*\a cm,0.4cm)}]
	(210:0.8) -- (-30:0.8);

\foreach \a in {3}
\draw[postaction={decorate}, red, shift={(1.7*\a cm,0.4cm)}]
	(210:0.8) -- (-30:0.8);
	
\foreach \a in {3,4,5}
\draw[postaction={decorate}, shift={(1.7*\a cm,0.4cm)}]
	(-30:0.8) -- (90:0.8);
	
\foreach \a in {3,4,5}	
\draw[postaction={decorate}, shift={(1.7*\a cm,0.4cm)}]
	(210:0.8) -- (90:0.8);

\foreach \a in {3,4}
\draw[postaction={decorate}, shift={(1.7*\a cm,-0.4cm)}, scale=-1]
	(-30:0.8) -- (90:0.8);
	
\foreach \a in {3,4}	
\draw[postaction={decorate}, shift={(1.7*\a cm,-0.4cm)}, scale=-1]
	(210:0.8) -- (90:0.8);
	
\end{scope}


\begin{scope}
[decoration={markings,mark=at position 0.5 with {
\draw[white,thick](0.2,0)--(-0.2,0);
\draw (0.2,0)--(0.1,-0.1)--(-0.1,0.1)--(-0.2,0);}}]

\foreach \a in {5}
\draw[postaction={decorate}, shift={(1.7*\a cm,-0.4cm)}, scale=-1]
	(-30:0.8) -- (90:0.8);
	
\foreach \a in {5}	
\draw[postaction={decorate}, shift={(1.7*\a cm,-0.4cm)}, scale=-1]
	(210:0.8) -- (90:0.8);
	
\end{scope}


\foreach \a in {1,2,4,5}
\draw[shift={(1.7*\a cm,0.4cm)}]
	(-30:0.8) -- (210:0.8);

\foreach \a in {1,-1}
{
\begin{scope}[xshift=10.2cm]

\foreach \b in {0,1}
\draw[shift={(1.7*\b cm, 0.4*\a cm)}, scale=\a]
	(210:0.8) -- (90:0.8);

\draw[shift={(1.7 cm, 0.4*\a cm)}, scale=\a]
	(-30:0.8) -- (90:0.8);
	
\draw[red, yshift=0.4*\a cm, scale=\a]
	(-30:0.8) -- (90:0.8);

\end{scope}
}

\foreach \a in {1,-1}
\foreach \b in {0,1,3,4,6,7}
{
\begin{scope}[shift={(1.7*\b cm, 0.4*\a cm)}, scale=\a]

\node at (90:0.5) {\tiny 1};
\node at (210:0.5) {\tiny 2};
\node at (-30:0.5) {\tiny 3};

\end{scope}
}

\foreach \a in {1,-1}
\foreach \b in {2,5}
{
\begin{scope}[shift={(1.7*\b cm, 0.4*\a cm)}, yscale=\a]

\node at (90:0.5) {\tiny 1};
\node at (210:0.5) {\tiny 2};
\node at (-30:0.5) {\tiny 3};

\end{scope}
}

\node at (0,-1.5) {$2h\bar{h}r$};
\node at (1.7,-1.5) {$2h\bar{h}a$};
\node at (3.4,-1.5) {$2h\bar{h}a$};
\node at (5.1,-1.5) {$2rr{\color{red} r'}$};
\node at (6.8,-1.5) {$2rra$};
\node at (8.5,-1.5) {$2rra$};
\node at (10.2,-1.5) {$2ra{\color{red} a'}$};
\node at (11.9,-1.5) {$2raa$};

\end{tikzpicture}
\caption{Some companion pairs.}
\label{pair}
\end{figure}

We remark that, if the edge $\overline{23}$ is $r$, then the pair of tiles are glued together in unique way, and all the quadrilaterals are congruent. If the edge $\overline{23}$ is $a$, then the pair can be glued together in two ways, and the quadrilateral tiling may have two prototiles.

The corners $1,2,3$ in Figure \ref{triangle} have {\em angle values} that we denote by $[1],[2],[3]$. We denote a vertex in a tiling by combinations such as $12^23^2$, which means the vertex consists of one corner 1, two corners 2, and two corners 3. The sum of the angle values of all the corners at a vertex is $2\pi$. For example, the vertex $12^23^2$ implies $[1]+2[2]+2[3]=2\pi$. We call the equality the {\em angle sum} of the vertex.

The following is the triangular analogue of Lemma 4 of \cite{wy1}. If all sides of the triangle are straight, then the lemma is a consequence of the fact that the area of the triangle is $[1]+[2]+[3]-\pi$, and is also the total area $4\pi$ of the sphere divided by the number $f$ of tiles. 

\begin{lemma}\label{anglesum}
If the corners of all tiles in a tiling of the sphere by $f$ triangles have the same three angle values $[1],[2],[3]$, then 
\[
[1]+[2]+[3]
=(1 + \tfrac{4}{f})\pi.
\]
\end{lemma}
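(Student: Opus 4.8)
The plan is to replace the area argument---unavailable here since the sides are not geodesics, so a curvilinear triangle has no Gauss--Bonnet formula relating its area to its corner angles---by a purely combinatorial count via Euler's formula. Write $v$, $e$, $f$ for the numbers of vertices, edges, and tiles; the tiling is a polygonal decomposition of $S^2$, so $v - e + f = 2$. Every tile is a triangle, hence has exactly three sides, and by definition of a side-to-side tiling each edge of the tiling carries exactly two sides---the side $s$ of the tile on one side of the edge curve and the side $\bar s$ of the tile on the other. Counting pairs (tile, side of that tile) in two ways therefore gives $3f = 2e$, so $e = \tfrac{3f}{2}$ and $v = 2 + e - f = 2 + \tfrac{f}{2}$.

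I would then double count the sum of all corner angles over all tiles. On one hand, each corner of each tile lies at exactly one vertex, and since all tiles are congruent to the prototile with corner values $[1],[2],[3]$, each tile contributes $[1]+[2]+[3]$; the total is $f([1]+[2]+[3])$. On the other hand, grouping the corners by the vertex at which they lie and using the angle sum $2\pi$ at each vertex (noted just before the lemma), the total is $2\pi v$. Equating the two expressions,
\[
f\bigl([1]+[2]+[3]\bigr) = 2\pi v = 2\pi\bigl(2 + \tfrac{f}{2}\bigr) = 4\pi + \pi f,
\]
and dividing by $f$ gives $[1]+[2]+[3] = \pi + \tfrac{4\pi}{f} = \bigl(1 + \tfrac{4}{f}\bigr)\pi$.

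The computation is short and I do not expect a genuine obstacle; the two points that deserve care are the bookkeeping identities. For $3f = 2e$ one must invoke that, in a side-to-side tiling, a side of a tile is a \emph{full} edge of the tiling rather than a proper sub-arc of one, so no edge is overcounted; this is precisely the side-to-side hypothesis together with each tile being a genuine triangle. For the angle double count one must check that the corners contributing at a vertex are exactly the tile corners meeting there, with no corner split between two vertices---immediate from the definitions. As a sanity check, for the Platonic tilings $P_4, P_8, P_{20}$ ($f = 4, 8, 20$) the formula yields $[1]+[2]+[3] = 2\pi, \tfrac{3\pi}{2}, \tfrac{6\pi}{5}$, i.e.\ equilateral angles $\tfrac{2\pi}{3}, \tfrac{\pi}{2}, \tfrac{2\pi}{5}$, consistent with $3$, $4$, $5$ tiles meeting at each vertex.
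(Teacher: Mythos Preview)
Your proof is correct and follows essentially the same argument as the paper: both double count the total corner-angle sum as $2\pi v = f([1]+[2]+[3])$ and combine this with $3f = 2e$ and Euler's formula to eliminate $v$ (you solve $v = 2 + \tfrac{f}{2}$, the paper equivalently writes $f = 2v - 4$). The additional remarks you make about why the geodesic area argument is unavailable, and the sanity check on Platonic solids, are helpful but not in the paper's version.
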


\begin{proof}
Since the angle sum of each vertex is $2\pi$, the total sum of all angles in the tiling is $2\pi v$, where $v$ is the number of vertices in the tiling. Moreover, the total sum of all angles is $f([1]+[2]+[3])$. Therefore we get $2\pi v = f([1]+[2]+[3])$. By $3f=2e$ and $v-e+f=2$, we get $f=2v-4$. Then we get $
[1]+[2]+[3] = 2\pi\tfrac{v}{f} = (1 + \tfrac{4}{f})\pi$.
\end{proof}

Strictly speaking, the edges need to be sufficiently nice in order for the angle values to make sense. Although we may assume all edges to be regular and continuously differentiable, the condition can be quite mild. For example, we may fix a small enough $\epsilon$, and then define the angle value of a corner to be $\frac{2\pi A}{A(\epsilon)}$. Here $A(\epsilon)$ is the area of the $\epsilon$-disk, and  $A$ is the area of intersection of the corner with the $\epsilon$-disk centered at the vertex.  Some kind of measurability is good enough for this definition. In a tiling, the sum of the angle values of all the corners at a vertex is still $2\pi$. Therefore Lemma \ref{anglesum} remains valid.

\section{Curvilinear Tiling of the Sphere}
\label{tiling}

\begin{proposition}
Tilings of the sphere by congruent $rrr$- or $rrr^{-1}$-triangles are regular tetrahedron, octahedron, and icosahedron.   
\end{proposition}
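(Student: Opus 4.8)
The plan is to show that the prototile is so nearly a straight equilateral triangle that the underlying combinatorics can only be that of a Platonic solid with triangular faces, and then to quote the uniqueness of such triangulations. I would begin with the combinatorial skeleton, exactly as in the proof of Lemma \ref{anglesum}: a side-to-side tiling by triangles is a triangulation of $S^2$, and since every vertex has degree $\ge 3$, Euler's formula with $3f=2e$ gives $f=2v-4$, and consequently $[1]+[2]+[3]=(1+\tfrac{4}{f})\pi$.

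The geometric heart is an observation about an $r$-type edge $\epsilon$: it is invariant under the rotation $\rho\in SO(3)$ by $\pi$ about its centre, and $\rho$ interchanges the two endpoints of $\epsilon$. Hence the tangent line of $\epsilon$ at one endpoint is the $\rho$-image of the tangent line at the other; writing each of these tangent lines as the geodesic chord of $\epsilon$ rotated by an angle $\theta$ (the ``turning'' of $\epsilon$), the turning is the same, in the same rotational sense relative to a fixed orientation of $S^2$, at both ends. For an $rrr$-triangle the three sides are congruent copies of one such $\epsilon$ with a common chirality (see below), so at each corner $P_i$ the two incident curved sides are obtained from the two geodesic chords out of $P_i$ by the same rotation $\theta$; the rotation cancels and the corner angle equals the angle at $P_i$ of the geodesic triangle $P_1P_2P_3$. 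That geodesic triangle is equilateral — each side has length the endpoint distance of $\epsilon$ — so its three angles coincide, and therefore $[1]=[2]=[3]$. (Equivalently: straightening every edge of an $rrr$-tiling preserves all corner angles, hence all vertex angle sums, producing a genuine side-to-side tiling of $S^2$ by congruent straight equilateral triangles.)

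Granting equilaterality, the finish is immediate: Lemma \ref{anglesum} gives $3[1]=(1+\tfrac{4}{f})\pi$, and the angle sum $2\pi$ at a vertex of degree $d$ gives $d[1]=2\pi$, so every vertex has the same degree $d=6f/(f+4)=6-24/(f+4)$; for $d$ an integer with $3\le d<6$ this forces $(f,d)\in\{(4,3),(8,4),(20,5)\}$, and a $d$-regular triangulation of $S^2$ with $d\in\{3,4,5\}$ is combinatorially the tetrahedron, octahedron, or icosahedron (for $d=3$ the $1$-skeleton is $K_4$; for $d=4$ it is $K_6$ minus a perfect matching, i.e.\ the octahedron graph $K_{2,2,2}$; $d=5$ is the classical icosahedral case, the embedding being unique by $3$-connectedness). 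All three indeed occur: bending every edge of a Platonic triangulation into a congruent $r$-symmetric curve, compatibly with the rotational symmetry group — which acts on the edges with stabiliser exactly the $\pi$-rotation about the midpoint, matching the $r$-symmetry — gives such tilings when the bending is small. For an $rrr^{-1}$-triangle exactly one side is the mirror image of $\epsilon$, so the triangle need only be scalene; here I would instead pair each tile with its neighbour across the distinguished $r^{-1}$-edge, forming a curvilinear quadrilateral all four of whose sides are congruent copies of $\epsilon$ of a common chirality. The same cancellation shows this quadrilateral has the angles of the geodesic equilateral quadrilateral (a spherical rhombus) on its four vertices, whose opposite angles are equal; cutting the rhombi along a diagonal reduces the $rrr^{-1}$ case to a side-to-side tiling by congruent straight isosceles triangles, classified in \cite{ua,cly}, again yielding only the three Platonic types.

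The two places needing care, and where I expect the real work to lie, are: (i) the chirality-consistency claim used above — that all edges of the tiling, in particular the three sides of a single tile, are copies of $\epsilon$ of the same chirality; this should follow because two adjacent tiles are related by the orientation-preserving isometry $\rho_e$, which preserves chirality, and $\rho_e$ in fact carries the whole tiling to itself by connectedness of the tile-adjacency graph — but one must first exclude degenerate configurations to know $\rho_e$ is a symmetry of the tiling; and (ii) making the informal ``the turning is the same at both ends, hence cancels at each corner'' into a rigorous spherical statement, and likewise closing the $rrr^{-1}$ case through the rhombus/isosceles reduction rather than leaving it at the level of a plausibility argument.
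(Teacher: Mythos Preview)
Your $rrr$ argument is correct and fleshes out what the paper compresses into one sentence: the paper simply says ``the distance between the three vertices of the $rrr$- or $rrr^{-1}$-triangles are equal; this implies the tilings are triangular Platonic solids,'' and then passes to the edge-labelling problem. Your turning-angle computation is one way to justify that implication in the $rrr$ case, and your deduction $[1]=[2]=[3]\Rightarrow d=6f/(f+4)\in\{3,4,5\}$ is exactly right.

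Where you diverge from the paper is $rrr^{-1}$. You correctly sense that the three curved angles need not coincide there (indeed $[2]=\alpha+2\theta$, $[3]=\alpha-2\theta$, $[1]=\alpha$ in your notation), and you respond by switching to a companion-pair rhombus reduction and invoking the straight isosceles classification. That detour is unnecessary and, as you note yourself, not fully closed. The paper's one-line argument covers both cases at once, and here is why: since every side is congruent to the same curve $\epsilon$ (or its mirror), the three corners of every tile are pairwise equidistant, so the geodesic triangle on those corners is equilateral with a fixed angle $\alpha$. At any vertex $V$, write $\delta_j$ for the signed deviation of the $j$-th incident edge's tangent from its chord; then the curved corner angle of the tile between edges $j$ and $j{+}1$ differs from the corresponding chord angle by $\delta_{j+1}-\delta_j$, and summing around $V$ these differences telescope to zero. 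Hence the chord angles also sum to $2\pi$, each is $\alpha$, and the degree is $2\pi/\alpha$ --- constant. This works verbatim for $rrr^{-1}$, with no need to know the individual $\delta_j$'s or to pair tiles into rhombi.

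So your concern (i) about global chirality consistency is moot --- the telescoping does not care whether $\delta_j=+\theta$ or $-\theta$ --- and (ii) reduces to the clean statement just given. Your rhombus route would eventually work, but the companion pairing across the distinguished edge is not obviously well-defined once you allow orientation-reversed copies of the prototile, and citing the full isosceles classification is heavier machinery than the problem needs.
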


\begin{proof}
The distance between the three vertices of the $rrr$- or $rrr^{-1}$-triangles are equal. This implies tilings by congruent $rrr$- or $rrr^{-1}$-triangles are triangular Platonic solids, i.e., regular tetrahedron, octahedron, and icosahedron. Then we need to assign $r$ or $r^{-1}$ to the edges of the Platonic solids, such that all faces are $rrr$-triangles, or all faces are $rrr^{-1}$-triangles. 

If all faces are $rrr$-triangles, then by the consistent orientations among all tiles, all faces are $rrr$ (otherwise all faces are $r^{-1}r^{-1}r^{-1}$, which represent the same tiling). Then we get the three Platonic $rrr$-tilings on the left of Figure \ref{tiling_rrr2}.

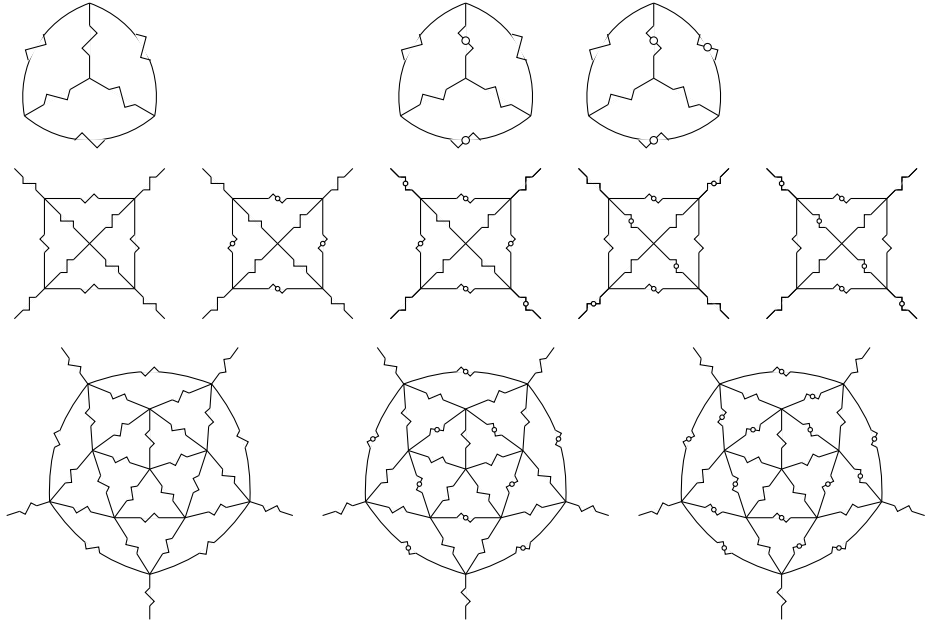
\begin{figure}[htp]
\centering
\begin{tikzpicture}[>=latex]



\begin{scope}
[decoration={markings,mark=at position 0.5 with 
{\draw[white,thick](0.2,0)--(-0.2,0);
\draw (0.2,0)--(0.1,-0.1)--(-0.1,0.1)--(-0.2,0);}}]

\foreach \b in {0,2,3}
{
\begin{scope}[xshift=2.5*\b cm]

\foreach \a in {-1,0}
\draw[postaction={decorate}, rotate=120*\a]
	(0,0) -- (-30:1);

\draw[postaction={decorate}]
	(90:1) to[out=200, in=100] (210:1);

\end{scope}	
}

\draw[postaction={decorate}]
	(0,0) -- (90:1);

\foreach \b in {0,2}
\draw[postaction={decorate}, xshift=2.5*\b cm]
	(-30:1) to[out=80, in=-20] (90:1);
	
\draw[postaction={decorate}]
	(210:1) to[out=-40, in=220] (-30:1);
									
\end{scope}	


\begin{scope}
[decoration={markings,mark=at position 0.5 with 
{\draw[white,thick](0.2,0)--(-0.2,0);
\draw (0.2,0)--(0.1,0.1)--(-0.1,-0.1)--(-0.2,0);
\filldraw[fill=white] (0,0) circle (0.05);}}]

\foreach \b in {2,3}
\draw[postaction={decorate}, xshift=2.5*\b cm]
	(0,0) -- (90:1);

\foreach \b in {2,3}	
\draw[postaction={decorate}, xshift=2.5*\b cm]
	(210:1) to[out=-40, in=220] (-30:1);

\draw[postaction={decorate}, xshift=7.5 cm]
	(-30:1) to[out=80, in=-20] (90:1);
								
\end{scope}


\begin{scope}[yshift=-2.2cm]

\foreach \a in {0,1,2,3}
{
\begin{scope}
[rotate=90*\a, decoration={markings,mark=at position 0.5 with {\draw[white,thick](0.12,0)--(-0.12,0);
\draw (0.12,0)--(0.06,0.06)--(-0.06,-0.06)--(-0.12,0);}}]

\draw[postaction={decorate}]
	(0,0) -- (0.6,0.6);
\draw[postaction={decorate}]
	(0.6,0.6) -- (-0.6,0.6);
\draw[postaction={decorate}]
	(0.6,0.6) -- (1,1);
								
\end{scope}	
}


\begin{scope}
[decoration={markings,mark=at position 0.5 with 
{\draw[white,thick](0.12,0)--(-0.12,0);
\draw (0.12,0)--(0.06,0.06)--(-0.06,-0.06)--(-0.12,0);}}]

\foreach \a in {0,1,2,3}
{

\foreach \b in {1,2}
\draw[postaction={decorate}, xshift=2.5*\b cm, rotate=90*\a]
	(0,0) -- (0.6,0.6);

\draw[postaction={decorate}, xshift=2.5 cm, rotate=90*\a]
	(0.6,0.6) -- (1,1);
}

\foreach \a in {1,-1}
\foreach \b in {3,4}
{

\foreach \c in {2,4}
\draw[postaction={decorate}, xshift=2.5*\c cm, scale=\a]
	(0.6,0.6) -- (1,1);

\draw[postaction={decorate}, xshift=7.5 cm, scale=\a]
	(-0.6,0.6) -- (-1,1);

\draw[postaction={decorate}, xshift=2.5*\b cm, scale=\a]
	(0.6,0.6) -- (0.6,-0.6);
	
\draw[postaction={decorate}, xshift=2.5*\b cm, scale=\a]
	(0,0) -- (0.6,0.6);
}
									
\end{scope}	


\begin{scope}
[decoration={markings,mark=at position 0.5 with 
{\draw[white,thick](0.12,0)--(-0.12,0);
\draw (0.12,0)--(0.06,-0.06)--(-0.06,0.06)--(-0.12,0);
\filldraw[fill=white] (0,0) circle (0.03);}}]

\foreach \a in {0,1,2,3}
\foreach \b in {1,2}
\draw[postaction={decorate}, xshift=2.5*\b cm, rotate=90*\a]
	(0.6,0.6) -- (-0.6,0.6);				

\foreach \a in {1,-1}
\foreach \b in {3,4}
{

\foreach \c in {2,4}
\draw[postaction={decorate}, xshift=2.5*\c cm, scale=\a]
	(0.6,-0.6) -- (1,-1);

\draw[postaction={decorate}, xshift=7.5 cm, scale=\a]
	(0.6,0.6) -- (1,1);
	
\draw[postaction={decorate}, xshift=2.5*\b cm, scale=\a]
	(0.6,0.6) -- (-0.6,0.6);
	
\draw[postaction={decorate}, xshift=2.5*\b cm, scale=\a]
	(0,0) -- (-0.6,0.6);
}
					
\end{scope}	

\end{scope}


\begin{scope}[shift={(0.8cm, -5.2cm)}]


\begin{scope}
[decoration={markings,mark=at position 0.5 with {
\draw[white,thick](0.12,0)--(-0.12,0);
\draw (0.12,0)--(0.06,0.06)--(-0.06,-0.06)--(-0.12,0);}}]

\foreach \a in {0,...,4}
{

\foreach \b in {0,1,2}
{
\begin{scope}[xshift=4.2*\b cm, rotate=72*\a]

\draw[postaction={decorate}]
	(0,0) -- (18:0.8);
\draw[postaction={decorate}]
	(18:0.8) -- (54:1.4);\draw[postaction={decorate}]
	(54:1.4) -- (54:2);												
\end{scope}	
}

\draw[postaction={decorate}, rotate=72*\a]
	(18:0.8) -- (90:0.8);

\draw[postaction={decorate}, rotate=72*\a]
	(54:1.4) to[out=160, in=20] (126:1.4);	

\foreach \b in {0,1}
\draw[postaction={decorate}, xshift=4.2*\b cm, rotate=72*\a]
	(54:1.4) -- (90:0.8);

}
\end{scope}


\begin{scope}
[decoration={markings,mark=at position 0.5 with 
{\draw[white,thick](0.12,0)--(-0.12,0);
\draw (0.12,0)--(0.06,-0.06)--(-0.06,0.06)--(-0.12,0);
\filldraw[fill=white] (0,0) circle (0.03);}}]

\foreach \a in {0,...,4}
{

\foreach \b in {1,2}
{
\begin{scope}[xshift=4.2*\b cm, rotate=72*\a]

\draw[postaction={decorate}]
	(18:0.8) -- (90:0.8);
\draw[postaction={decorate}]
	(54:1.4) to[out=160, in=20] (126:1.4);												
\end{scope}	
}

\draw[postaction={decorate}, xshift=8.4 cm, rotate=72*\a]
	(54:1.4) -- (90:0.8);
}			
					
\end{scope}	

\end{scope}

\end{tikzpicture}
\caption{Platonic solids of types $rrr$, $rrr^{-1}$.}
\label{tiling_rrr2}
\end{figure}

For $rrr^{-1}$-tiling, we need to assign orientations to the edges of the Platonic solid, such that each face is $rrr^{-1}$ or $rr^{-1}r^{-1}$. This means assigning $\circ$ to the edges, such that each face has either one or two $\circ$.

There are exactly two $rrr^{-1}$-tetrahedra, given by the second and third in the first row of Figure \ref{tiling_rrr2}. There are many more $rrr^{-1}$-octahedra and $rrr^{-1}$-icosahedra. Figure \ref{tiling_rrr2} gives some of them.
\end{proof} 

\begin{proposition}
Tilings of the sphere by congruent triangles of types $g\bar{g}r$, $g\bar{g}a$, $rr^{-1}{\color{red} r'}$,  $rr^{-1}a$,  $r{\color{red} r'}{\color{blue} r''}$, $r{\color{red} r'}a$ are the tetrahedra in Figure \ref{tiling_gga2}. Moreover, the two red ${\color{red} r'}$s in the $rr^{-1}{\color{red} r'}$-tetrahedron in the third of Figure \ref{tiling_gga2} can independently change directions.  
\end{proposition}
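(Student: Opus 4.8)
I would prove that the tiling is combinatorially the regular tetrahedron, and then enumerate the decorations of its six edges by the curvilinear data of the prototile. For all six types the side $\overline{23}$ is the distinguished one ($r$, $r'$, or $a$), so by the remark after Figure~\ref{triangle} the tiles form companion pairs glued along $\overline{23}$. The first goal is $f=4$; by $3f=2e$ and $v-e+f=2$ this is equivalent to every vertex having degree $3$, and then Lemma~\ref{anglesum} gives $[1]+[2]+[3]=2\pi$.

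\textbf{Reduction to the tetrahedron.} The rigidity lives on the sides $\overline{12},\overline{13}$. For $g\bar g r$ and $g\bar g a$ these are the general sides $g,\bar g$, and a $g$-side can only be glued to a $\bar g$-side; hence ``sharing the edge under $\overline{12}$ or $\overline{13}$'' makes every tile have exactly two such neighbours, and the tiles split into cycles (bands) in which the $\overline{12}$-side of each tile meets the $\overline{13}$-side of the next. Since a general curve is asymmetric, the identification of the end points of such a shared edge is forced, and there are only two possibilities: it matches corner $1$ of a tile with corner $1$ of the next, or with corner $3$. In the first case a band closes into a fan around one vertex $V$ with $\deg(V)\,[1]=2\pi$, all the $2$- and $3$-corners of the band landing on the rim polygon of $\overline{23}$-edges; chasing this forces the complementary region to be built of fans as well, and the angle sum then collapses the tiling to the tetrahedron (or forces the prototile to degenerate into a less curvilinear type). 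In the second case, following the identification around a band shows every vertex receives precisely the three corner types $1,2,3$ from three consecutive band-tiles, so every vertex has degree $3$ and $f=4$. For $rr^{-1}r'$, $rr^{-1}a$, $rr'r''$, $rr'a$ the sides $\overline{12},\overline{13}$ are $r$-symmetric or straight; here I would pass to the companion-pair quadrilateral, whose prototile has the two pairs of opposite angles $\{[1],[2]+[3]\}$ with $r$-symmetric or straight sides, and show such a quadrilateral tiles the sphere only as two copies glued along a $4$-gon, i.e.\ again $f=4$.

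\textbf{Decorating the tetrahedron and the ``moreover''.} With the base fixed as the regular tetrahedron on vertices $A,B,C,D$ and a copy of the prototile on each face, $[1]+[2]+[3]=2\pi$ forces the three corners at each vertex to be a combination allowed by the angle sum; the compatible combinatorial patterns are exactly the tetrahedra in Figure~\ref{tiling_gga2}. It remains to assign the edge data: a straight side carries none; a general side and its mate $\bar g$ are forced onto the two sides of one edge, so there is no choice; and an $r$-symmetric side carries a direction. For $rr^{-1}r'$ every edge under $\overline{12}$ or $\overline{13}$ is the $r$-side of one tile and the $r^{-1}$-side of the other, so it carries no free data; the only freedom is the direction of the $\overline{23}$-side, an $r$-symmetric side of type $r'$. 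The tetrahedron has exactly two $\overline{23}$-edges (the four faces carry one each), and since an $r'$-edge is adjacent to no other $r'$-edge and nothing at its end points constrains its direction, the two $r'$-edges may be oriented independently --- this is the ``moreover'' claim. Finally I would verify that each resulting decorated tetrahedron is realised by a genuine spherical tiling and identify those related by an isometry, obtaining the list in Figure~\ref{tiling_gga2}.

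\textbf{Main obstacle.} The crux is the reduction step --- ruling out vertices of degree $\ge 4$. The band/fan dichotomy is decisive for a generic prototile, but when the angle values coincide (say $[2]=[3]$, or $[1]$ permits a larger fan) the curvilinear rigidity weakens and larger vertices become a priori possible; each such configuration must be shown to close back up into the tetrahedron or to push the prototile into one of the less curvilinear types treated in later propositions. The companion-quadrilateral argument for the four $r$/straight-sided types needs the same care. The other delicate point is keeping the $g$-versus-$\bar g$ orientation of each general edge (and the $r$-versus-$r^{-1}$ direction of each $r$-symmetric edge) consistent around every vertex, which is what finally pins the decoration down to Figure~\ref{tiling_gga2}.
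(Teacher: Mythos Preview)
Your case split for the $g\bar g$-types is the wrong move. For the six types in this proposition the identification of endpoints along the $\overline{12}$/$\overline{13}$ edge is \emph{not} a two-way choice: the orientation data built into the prototile (this is exactly what distinguishes $g\bar g r$ from $g\bar g^{-1}r$, and $rr^{-1}r'$ from $rr r'$) forces corner~$1$ of one tile to coincide with corner~$3$ of its neighbour across that edge --- your ``first case'' (corner~$1$ meets corner~$1$, fan of $1$'s) simply does not occur for these types. It is precisely the content of the \emph{next} proposition, for $g\bar g^{-1}r$ and $g\bar g^{-1}a$, where the fan picture and the triangular-subdivision conclusion arise. So your dichotomy conflates two different propositions.

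Once you see that only your ``second case'' is live, the paper's argument is much shorter than your band machinery: the local forcing at a single vertex shows every vertex is $(213)^k$; Lemma~\ref{anglesum} gives $[1]+[2]+[3]>\pi$, hence $k=1$; so every vertex is $123$, $f=4$, and the tiling is a tetrahedron. No global band/cycle structure is needed, and your ``main obstacle'' about coincident angle values evaporates --- the argument is purely combinatorial until the very last step, which only uses the strict inequality from the lemma and is insensitive to whether $[2]=[3]$ or $[1]$ divides $2\pi$.

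The detour through companion quadrilaterals for the four $r$-sided types is likewise unnecessary. The same one-vertex forcing works verbatim: for $rr^{-1}r'$ and $rr^{-1}a$ the pair $r,r^{-1}$ behaves exactly like $g,\bar g$ across an edge, and for $rr'r''$, $rr'a$ the sides $\overline{12}$ and $\overline{13}$ carry distinct $r$-curves, so again one tile determines its two neighbours at a vertex (up to the free direction of $r'$ in the $rr^{-1}r'$ case, which is where the ``moreover'' comes from). Your decoration analysis at the end (two $r'$-edges on the tetrahedron, each independently orientable) is correct, but getting there should cost you three lines rather than a page.
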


\begin{proof}
The left of Figure \ref{tiling_gga1} shows the $g\bar{g}r$-triangle and the $g\bar{g}a$-triangle. The glueing of $g$ and $\bar{g}$ means the side $\overline{12}$ of one tile matches the side $\overline{31}$ of the other tile. This implies one of the tiles \circled{1}, \circled{2}, \circled{3} determines the other two. Therefore a vertex $\bullet$ is $(213)^k=213213\cdots 213$ for $g\bar{g}r$, and a combination of 213 and 312 (instead of 213 only) for $g\bar{g}a$. By Lemma \ref{anglesum}, we have $[1]+[2]+[3]>\pi$. Since the angle sum of a vertex is $2\pi$, this implies $k=1$. Therefore $213$ is the only vertex. This further implies $f=4$, and the tiling is a tetrahedron, given by the first and second of Figure \ref{tiling_gga2}.

\begin{figure}[htp]
\centering
\begin{tikzpicture}[>=latex]

\foreach \a in {30,150}
\foreach \b in {0,1}
{
\begin{scope}[yshift=-1.7*\b cm, shift={(\a:0.8)}]

\begin{scope}
[decoration={markings,mark=at position 0.5 with {
\draw[white, thick] (0.1,0)--(-0.1,0);
\draw (0.1,0)--(-0.1,0.1)--(-0.1,0);
}}]

\draw[postaction={decorate}]
	(210:0.8) -- (90:0.8);	

\end{scope}

\begin{scope}
[decoration={markings,mark=at position 0.5 with {
\draw[white, thick] (0.1,0)--(-0.1,0);
\draw (0.1,0)--(-0.1,-0.1)--(-0.1,0);
}}]

\draw[postaction={decorate}]
	(90:0.8) -- (-30:0.8);

\end{scope}	

\end{scope}	
}


\begin{scope}
[decoration={markings,mark=at position 0.5 with {
\draw[white, thick] (0.2,0)--(-0.2,0);
\draw (0.2,0)--(0.1,-0.1)--(-0.1,0.1)--(-0.2,0);
\filldraw[fill=white] (0,0) circle (0.05);
}}]

\foreach \a in {30,150}
\foreach \b in {0,1}
\draw[postaction={decorate}, xshift=3.5 cm, yshift=-1.7*\b cm, shift={(\a:0.8)}]
	(-30:0.8) -- (90:0.8);	

\draw[postaction={decorate}]
	(-0.693, 1.2) -- ++(1.386,0);	
	
\end{scope}

\begin{scope}
[decoration={markings,mark=at position 0.5 with {
\draw[white, thick] (0.2,0)--(-0.2,0);
\draw (0.2,0)--(0.1,0.1)--(-0.1,-0.1)--(-0.2,0);
}}]

\foreach \a in {30,150}
\foreach \b in {0,1}
\draw[postaction={decorate}, xshift=3.5 cm, yshift=-1.7*\b cm, shift={(\a:0.8)}]
	(90:0.8) -- (210:0.8);

\foreach \a in {30,150}
\foreach \b in {0,1}
\draw[postaction={decorate}, red, xshift=7 cm, yshift=-1.7*\b cm, shift={(\a:0.8)}]
	(90:0.8) -- (-30:0.8);	

\foreach \a in {30,150}
\draw[postaction={decorate}, xshift=7 cm, shift={(\a:0.8)}]
	(210:0.8) -- (90:0.8);

\foreach \a in {30,150}
\draw[postaction={decorate}, xshift=7 cm, yshift=-1.7 cm, shift={(\a:0.8)}]
	(210:0.8) -- (90:0.8);

\foreach \a/\b in {-1.386/0, 0/0}
\draw[postaction={decorate}]
	(\a,\b) -- ++(1.386,0);	

\foreach \a/\b in {-1.386/0, 0/0, -0.693/1.2}
\draw[postaction={decorate}, red]
	(3.5+\a,\b) -- ++(1.386,0);
			
\foreach \a/\b in {-1.386/0, 0/0, -0.693/1.2}
\draw[postaction={decorate}, blue, xshift=7 cm]
	(\a,\b) -- ++(1.386,0);	
	
\end{scope}

\foreach \b in {0,1,2}
\draw[xshift=3.5*\b cm, yshift=-1.7 cm]
	(-1.386,0) -- (1.386,0)
	(-0.693,1.2) -- ++(1.386,0);


\foreach \b in {0,1,2}
\foreach \c in {0,1}
{
\begin{scope}[xshift=3.5*\b cm, yshift=-1.7*\c cm]
	
\foreach \a in {30,150}
{
\begin{scope}[shift={(\a:0.8)}]
	
\node at (90:0.5) {\tiny 1};
\node at (210:0.5) {\tiny 2};
\node at (-30:0.5) {\tiny 3};

\end{scope}
}

\node[inner sep=0.5, draw, shape=circle] at (30:0.8) {\tiny 1};
\node[inner sep=0.5, draw, shape=circle] at (90:0.8) {\tiny 2};
\node[inner sep=0.5, draw, shape=circle] at (150:0.8) {\tiny 3};

\fill
	(0,0) circle (0.05);

\end{scope}
}

\foreach \c in {0,1}
{
\begin{scope}[yshift=-1.7*\c cm]

\begin{scope}[shift={(90:0.8)}]

\node at (-90:0.5) {\tiny 1};
\node at (150:0.5) {\tiny 2};
\node at (30:0.5) {\tiny 3};

\end{scope}

\foreach \b in {1,2}
{
\begin{scope}[xshift=3.5*\b cm, shift={(90:0.8)}]

\node at (-90:0.5) {\tiny 1};
\node at (150:0.5) {\tiny 3};
\node at (30:0.5) {\tiny 2};

\end{scope}
}

\end{scope}
}

\end{tikzpicture}
\caption{Vertex in tilings of types $g\bar{g}r$, $g\bar{g}a$, $rr^{-1}{\color{red} r'}$,  $rr^{-1}a$,  $r{\color{red} r'}{\color{blue} r''}$, $r{\color{red} r'}a$.}
\label{tiling_gga1}
\end{figure}

The same argument applies to $rr^{-1}{\color{red} r'}$ and $rr^{-1}a$. In the middle of Figure \ref{tiling_gga1}, one of the tiles \circled{1}, \circled{2}, \circled{3} again determines the other two, up to the independent change of the direction of red ${\color{red} r'}$ in $rr^{-1}{\color{red} r'}$. The reason for the change of direction is due to the horizontal flip of the $rr^{-1}{\color{red} r'}$-triangle in Figures \ref{triangle} and \ref{tiling_gga1}, which gives the same triangle except the reversion of the direction of the red ${\color{red} r'}$. Then we conclude a vertex $\bullet$ is $213$, and the tiling is a tetrahedron, given by the third and fourth of Figure \ref{tiling_gga2}. 

\begin{figure}[htp]
\centering
\begin{tikzpicture}[>=latex]

\foreach \a in {0,1}
{
\begin{scope}[xshift=2.2*\a cm]

\begin{scope}
[decoration={markings,mark=at position 0.5 with {
\draw[white, thick] (0.1,0)--(-0.1,0);
\draw (0.1,0)--(-0.1,0.1)--(-0.1,0);
}}]
 
\draw[postaction={decorate}]
	(210:1) -- (0,0);	
\draw[postaction={decorate}]
	(-30:1) to[out=80, in=-20] (90:1);
	
\end{scope}

\begin{scope}
[decoration={markings,mark=at position 0.5 with {
\draw[white, thick] (0.1,0)--(-0.1,0);
\draw (0.1,0)--(-0.1,-0.1)--(-0.1,0);
}}]

\draw[postaction={decorate}]
	(0,0) -- (-30:1);
\draw[postaction={decorate}]
	(90:1) to[out=200, in=100] (210:1);

\end{scope}

\end{scope}
}

\begin{scope}
[decoration={markings, mark=at position 0.5 with {
\draw[white, thick] (0.2,0)--(-0.2,0);
\draw (0.2,0)--(0.1,0.1)--(-0.1,-0.1)--(-0.2,0);
}}]

\foreach \a in {2,3}
\draw[postaction={decorate}, xshift=2.2*\a cm]
	(0,0) -- (-30:1);

\draw[postaction={decorate}, xshift=8.8cm]
	(0,0) -- (90:1);

\draw[postaction={decorate}, xshift=11 cm]
	(210:1) -- (0,0);
	
\foreach \a in {4,5}
\draw[postaction={decorate}, red, xshift=2.2*\a cm]
	(-30:1) -- (0,0);
	
\draw[postaction={decorate}, red, xshift=4.4cm]
	(0,0) -- (90:1);
	
\draw[postaction={decorate}, blue, xshift=8.8 cm]
	(210:1) -- (0,0);
			
\foreach \a in {2,3}
\draw[postaction={decorate}, xshift=2.2*\a cm]
	(90:1) to[out=200, in=100] (210:1);

\foreach \a in {0,4}
\draw[postaction={decorate}, xshift=2.2*\a cm]
	(210:1) to[out=-40, in=220] (-30:1);

\draw[postaction={decorate}, xshift=11 cm]
	(-30:1) to[out=80, in=-20] (90:1);
			
\draw[postaction={decorate}, red, xshift=4.4cm]
	(210:1) to[out=-40, in=220] (-30:1);
	
\foreach \a in {4,5}
\draw[postaction={decorate}, red, xshift=2.2*\a cm]
	(90:1) to[out=200, in=100] (210:1);

\draw[postaction={decorate}, blue, xshift=8.8 cm]
	(-30:1) to[out=80, in=-20] (90:1);
					
\end{scope}

\begin{scope}
[decoration={markings, mark=at position 0.5 with {
\draw[white, thick] (0.2,0)--(-0.2,0);
\draw (0.2,0)--(0.1,-0.1)--(-0.1,0.1)--(-0.2,0);
\filldraw[fill=white] (0,0) circle (0.05);
}}]

\foreach \a in {2,3}
\draw[postaction={decorate}, xshift=2.2*\a cm]
	(210:1) -- (0,0);

\foreach \a in {2,3}			
\draw[postaction={decorate}, xshift=2.2*\a cm]
	(-30:1) to[out=80, in=-20] (90:1);

\draw[postaction={decorate}]
	(0,0) -- (90:1);

\end{scope}

\foreach \a in {1,3,5}
\draw[xshift=2.2*\a cm]
	(0,0) -- (90:1)
	(210:1) to[out=-40, in=220] (-30:1);

\end{tikzpicture}
\caption{Tilings of types $g\bar{g}r$, $g\bar{g}a$, $rr^{-1}{\color{red} r'}$,  $rr^{-1}a$,  $r{\color{red} r'}{\color{blue} r''}$, $r{\color{red} r'}a$. The two red ${\color{red} r'}$-curves in the third picture can independently change directions.}
\label{tiling_gga2}
\end{figure}
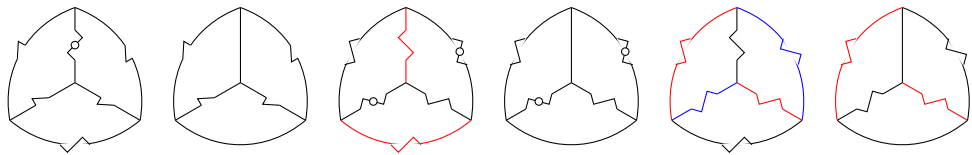

The same argument also applies to $r{\color{red} r'}{\color{blue} r''}$ and $r{\color{red} r'}a$. Again we get the unique combinations on the right of Figure \ref{tiling_gga2}. Then a vertex $\bullet$ is $213$, and the tiling is a tetrahedron, given by the fifth and sixth of Figure \ref{tiling_gga2}. 
\end{proof}

\begin{proposition}
Tilings of the sphere by congruent triangles of types $g\bar{g}^{-1}r$, $g\bar{g}^{-1}a$ are the triangular subdivisions of the Platonic solids. Moreover, the faces in $g\bar{g}^{-1}a$-triangular subdivisions can independently change orientations.
\end{proposition}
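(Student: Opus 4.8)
The plan is to reuse the strategy of the previous two propositions, but to keep careful track of the orientation of the general side. Label the prototile so that $\overline{12}$ carries the side $g$, $\overline{31}$ carries $\bar g^{-1}$ (resp.\ $\bar g$), and $\overline{23}$ carries $r$ (resp.\ the straight arc $a$). Since $g$ and $\bar g$ occur the same number of times in the prototile, side-to-side gluing forces every edge bearing the $g$-side also to bear the $\bar g$-side, so in the tiling each copy of $\overline{12}$ is glued to a copy of $\overline{31}$. The key local fact, read off from one picture as in Figure~\ref{tiling_gga1}, is that because the side on $\overline{31}$ is $\bar g^{-1}$ and not $\bar g$, this gluing matches corner $1$ with corner $1$ — the opposite of the $g\bar g r$-/$g\bar g a$-case, where corner $1$ is matched with corner $3$ and the tiling collapses to a tetrahedron.

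Granting this, the structure unwinds as follows. Starting at any corner $1$ and following the chain of $\overline{12}$–$\overline{31}$ gluings, every tile met has its corner $1$ at the same vertex, so the chain closes into a \emph{fan} of $p\ge 3$ tiles; that vertex is $1^p$ with $p[1]=2\pi$, and every vertex carrying a corner $1$ is of this type. At a vertex with no corner $1$, the only sides present at a corner $2$ are $g$ (on $\overline{12}$) and $r$ or $a$ (on $\overline{23}$), and at a corner $3$ they are $\bar g$ (on $\overline{31}$) and $r$ or $a$. The $g$/$\bar g$ gluing there joins corner $2$ to corner $3$, and an $r$-side glues only to an $r$-side, again joining corner $2$ to corner $3$; hence the corners alternate, the vertex is $(23)^q$ with $q([2]+[3])=2\pi$, and $q$ is the same at every such vertex. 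For the $a$-triangle, $\overline{23}$ may instead be glued $2$-to-$2$ or $3$-to-$3$; I would note this is exactly the source of the orientation freedom in the conclusion, and that within a fan the legs are rigid general edges, so such a reversal can only be carried out on a whole fan at once, reflecting it.

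Passing to the macro-tiling whose faces are the fans: each fan is a curvilinear $p$-gon bounded by its $p$ base edges $\overline{23}$, with every corner equal to $[2]+[3]$, and these $p$-gons tile the sphere side-to-side with $q$ of them at each vertex. Counting $f$ tiles, $f/p$ fans, $f/q$ of the $(23)$-vertices, $f$ leg-edges and $f/2$ base-edges and substituting into $v-e+f=2$ gives $f\!\left(\tfrac1p+\tfrac1q-\tfrac12\right)=2$. With $p\ge 3$ and $q\ge 2$ the solutions are $(p,q,f)=(3,3,12),(3,4,24),(4,3,24),(3,5,60),(5,3,60)$, together with a degenerate family $(p,2,2p)$. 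The five non-degenerate pairs $(p,q)$ are the Schläfli data of the five Platonic solids, and since the combinatorially regular sphere map with these data is unique, the fan-tiling is combinatorially $P_n$ and the original tiling is the corresponding triangular subdivision $T_{\triangle}P_n$ ($n=4,6,8,12,20$). Conversely, bending the straight legs of each $T_{\triangle}P_n$ into a common general edge, and the straight bases into a common $r$-symmetric edge for $g\bar g^{-1}r$ or leaving them straight for $g\bar g^{-1}a$, realizes these as curvilinear tilings; and with straight bases each fan can be reflected independently — a straight arc is glued the same way in either orientation, while no fan can be reflected piecewise — which is the final clause of the proposition.

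The step I expect to be the real obstacle is excluding the degenerate family $(p,2,2p)$: there two fans would meet along all $p$ of their sides and $[2]+[3]=\pi$, so the macro-$p$-gon has only pseudo-corners and the macro-structure degenerates to a dihedron. This is not killed by the Euler count, and must be ruled out by looking at the admissible curvilinear prototiles directly — in the same spirit as the nonexistence of curvilinear barycentric subdivisions, of a curvilinear $E_{\triangle}3$, and of $F'E_{\triangle}1$. A smaller and more routine point is to check, for the $a$-case, that the possible $(23)$-vertices (concatenations of $23$- and $32$-blocks) are exactly those produced by independent fan reflections of $T_{\triangle}P_n$, and that the mirror image of a $g\bar g^{-1}a$-triangle is again of type $g\bar g^{-1}a$ rather than $g\bar g a$.
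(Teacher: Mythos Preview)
Your approach is the paper's approach: both observe that the $g$–$\bar g$ gluing for the $g\bar g^{-1}$ type matches corner $1$ to corner $1$, so every vertex carrying a corner $1$ is $1^p$ and the tiles there form a regular $p$-gon $N(1^p)$; the sphere is then tiled side-to-side by these $p$-gons, and one concludes it is a Platonic solid. The paper simply asserts this last step (``This is a regular Platonic solid. In particular, we get $k=3,4,5$''); you run Euler's formula and solve for $(p,q)$.

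Your worry about the dihedron family $(p,2,2p)$ is precisely the gap, and the paper's proof does not close it either---its appeal to the Platonic classification implicitly assumes the macro-tiling has vertex degree $\ge 3$, but that hypothesis is only imposed on the \emph{triangular} tiling, where a $(23)^2$-vertex has degree $4$. In fact the dihedron does not appear to be excluded at all: take a north fan of $p$ tiles and let the south fan be its image under the rotation $\rho$ by $\pi$ about the midpoint of one base edge. The $r$-symmetry of the base is exactly what makes $\rho$ fix each base edge setwise, and one checks that the south legs and the south-fan gluings are consistent (using $\rho R_N\rho^{-1}=R_N^{-1}$); all $2p$ tiles are orientation-preserving images of one prototile, whose legs may be chosen genuinely general. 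This is the exact analogue of the earth map tiling $E_\triangle^I1$ that the paper \emph{does} admit for $h\bar hr$, $h\bar ha$, $rr{\color{red}r'}$, $rra$ in the later proposition. So the obstacle you anticipate is real, and it looks less like a missing lemma than an omission in the statement of the proposition itself.
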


The $g\bar{g}^{-1}r$-triangular subdivisions of regular triangle, quadrilateral, and pentagon are given by the second, third and fourth of Figure \ref{tiling_gga3}. This can be applied to the regular faces of Platonic solids, and produces $g\bar{g}^{-1}r$-triangular subdivisions of Platonic solids. The fifth of Figure \ref{tiling_gga3} shows the $g\bar{g}^{-1}r$-triangular subdivision of the regular tetrahedron. 

The $g\bar{g}^{-1}a$-tilings are also triangular subdivisions of Platonic solids. The $a$-edge allows us to independently change orientations of the faces of Platonic solids. The fourth of Figure \ref{tiling_gga4} shows a $g\bar{g}^{-1}a$-triangular subdivisions of the cube, in which some faces have different orientations.

\begin{proof}
The first of Figure \ref{tiling_gga3} shows the $g\bar{g}^{-1}r$-triangle. The glueing of $g$ and $\bar{g}$ means the side $\overline{12}$ of one tile matches the side $\overline{13}$ of the other tile. Then \circled{1} determines \circled{2}, and \circled{2} further determines the next tile around the vertex $\bullet$. Therefore the vertex $\bullet$ is $1^k=11\cdots1$, and all the tiles around the vertex form a regular $k$-gon $N(1^k)$. Then the tiling is a side-to-side tiling of regular $k$-gons $N(1^k)$. This is a regular Platonic solid. In particular, we get $k=3,4,5$, and the $g\bar{g}^{-1}r$-tiling is triangular subdivision of Platonic solids.

\begin{figure}[htp]
\centering
\begin{tikzpicture}[>=latex]

\begin{scope}[yshift=0.7cm]

\foreach \a in {0,1}
{
\begin{scope}[rotate=60*\a]

\node at (-90:0.3) {\tiny 1};
\node at (-110:1.1) {\tiny 2};
\node at (-70:1.1) {\tiny 3};

\end{scope}
}

\fill
	(0,0) circle (0.05);

\node[inner sep=0.5, draw, shape=circle] at (-90:0.8) {\tiny 1};
\node[inner sep=0.5, draw, shape=circle] at (-30:0.8) {\tiny 2};

\end{scope}

\begin{scope}
[decoration={markings,mark=at position 0.5 with {
\draw[white, thick] (0.1,0)--(-0.1,0);
\draw (0.1,0)--(-0.1,0.1)--(-0.1,0);
}}]

\foreach \a in {0,1,2}
\draw[postaction={decorate}, yshift=0.7cm, rotate=60*\a]
	(-120:1.386) -- (0,0);	

\foreach \a in {0,1,2}
\draw[postaction={decorate}, xshift=2.5cm, yshift=-0.2cm, rotate=120*\a]
	(-30:1.2) -- (0,0);

\foreach \a in {0,1,2,3}
\draw[postaction={decorate}, xshift=5cm, rotate=90*\a]
	(45:1.2) -- (0,0);

\foreach \a in {0,1,2,3,4}
\draw[postaction={decorate}, xshift=7.5cm, rotate=72*\a]
	(90:1.2) -- (0,0);

\end{scope}

\begin{scope}
[decoration={markings,mark=at position 0.5 with {
\draw[white, thick] (0.2,0)--(-0.2,0);
\draw (0.2,0)--(0.1,0.1)--(-0.1,-0.1)--(-0.2,0);
}}]

\foreach \a in {0,1}
\draw[postaction={decorate}, yshift=0.7cm, rotate=60*\a]
	(-120:1.386) -- (-60:1.386);

\foreach \a in {0,1,2}
\draw[postaction={decorate}, xshift=2.5cm, yshift=-0.2cm, rotate=120*\a]
	(-30:1.2) -- (210:1.2);

\foreach \a in {0,1,2,3}
\draw[postaction={decorate}, xshift=5cm, rotate=90*\a]
	(45:1.2) -- (-45:1.2);

\foreach \a in {0,1,2,3,4}
\draw[postaction={decorate}, xshift=7.5cm, rotate=72*\a]
	(90:1.2) -- (18:1.2);
					
\end{scope}

\begin{scope}[xshift=10.5cm]

\foreach \a in {0,1,2}
{
\begin{scope}
[rotate=120*\a,
decoration={markings,mark=at position 0.5 with {
\draw[white, thick] (0.2,0)--(-0.2,0);
\draw (0.2,0)--(0.1,0.1)--(-0.1,-0.1)--(-0.2,0);
}}]

\draw[postaction={decorate}]
	(210:1.5) to[out=-40, in=220] (-30:1.5);
\draw[postaction={decorate}]
	(0,0) -- (-30:1.5);
		
\end{scope}
}

\foreach \a in {0,1,2}
{
\begin{scope}
[rotate=120*\a,
decoration={markings,mark=at position 0.5 with {
\draw[white, thick] (0.1,0)--(-0.1,0);
\draw (0.1,0)--(-0.1,0.1)--(-0.1,0);
}}]

\draw[postaction={decorate}]
	(0,0) -- (30:0.7);
\draw[postaction={decorate}]
	(90:1.5) -- (30:0.7);
\draw[postaction={decorate}]
	(-30:1.5) -- (30:0.7);
\draw[postaction={decorate}]
	(90:1.5) -- (90:2.2);
				
\end{scope}	
}

\end{scope}

\end{tikzpicture}
\caption{$g\bar{g}^{-1}r$-tiling.}
\label{tiling_gga3}
\end{figure}
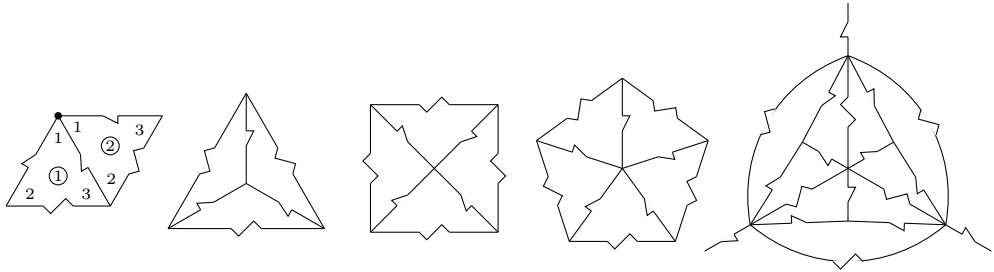

The argument for $g\bar{g}^{-1}a$ is similar. The tiling is a side-to-side tiling of regular $k$-gons $N(1^k)$, which are obtained by changing the $r$-edges in the the regular $k$-gons in Figure \ref{tiling_gga3} to $a$-edges. See the first, second and third of Figure \ref{tiling_gga4}. The tiling is then a Platonic solid, with each face given by $N(1^k)$. In other words, the tiling is a triangular subdivision of a Platonic solid. 

\begin{figure}[htp]
\centering
\begin{tikzpicture}[>=latex]

\begin{scope}
[decoration={markings,mark=at position 0.5 with {
\draw[white, thick] (0.1,0)--(-0.1,0);
\draw (0.1,0)--(-0.1,0.1)--(-0.1,0);
}}]

\foreach \a in {0,1,2}
\draw[postaction={decorate}, yshift=-0.2cm, rotate=120*\a]
	(-30:1.2) -- (0,0);

\foreach \a in {0,1,2,3}
\draw[postaction={decorate}, xshift=2.5cm, rotate=90*\a]
	(45:1.2) -- (0,0);

\foreach \a in {0,1,2,3,4}
\draw[postaction={decorate}, xshift=5cm, rotate=72*\a]
	(90:1.2) -- (0,0);

\end{scope}

\foreach \a in {0,1,2}
\draw[yshift=-0.2cm, rotate=120*\a]
	(-30:1.2) -- (210:1.2);

\foreach \a in {0,1,2,3}
\draw[xshift=2.5cm, rotate=90*\a]
	(45:1.2) -- (-45:1.2);

\foreach \a in {0,1,2,3,4}
\draw[xshift=5cm, rotate=72*\a]
	(90:1.2) -- (18:1.2);

\begin{scope}[xshift=9cm]

\foreach \a in {0,...,3}
\draw[rotate=90*\a]
	(-45:1) -- (45:1) -- (45:2.2) -- (-45:2.2);

\begin{scope}
[decoration={markings,mark=at position 0.5 with {
\draw[white, thick] (0.1,0)--(-0.1,0);
\draw (0.1,0)--(-0.1,0.1)--(-0.1,0);
}}]

\foreach \a in {0,...,3}
{
\begin{scope}[rotate=90*\a]

\draw[postaction={decorate}]
	(45:1) -- (0,0);
\draw[postaction={decorate}]
	(45:2.2) -- (45:3);
	
\end{scope}
}

\foreach \a in {0,1,2}
{
\begin{scope}[rotate=90*\a]

\draw[postaction={decorate}]
	(45:1) -- (1.1,0);
\draw[postaction={decorate}]
	(-45:1) -- (1.1,0);
\draw[postaction={decorate}]
	(45:2.2) -- (1.1,0);
\draw[postaction={decorate}]
	(-45:2.2) -- (1.1,0);
	
\end{scope}
}

\draw[postaction={decorate}]
	(0,-1.1) -- (-45:1);
\draw[postaction={decorate}]
	(0,-1.1) -- (-135:1);
\draw[postaction={decorate}]
	(0,-1.1) -- (-45:2.2);
\draw[postaction={decorate}]
	(0,-1.1) -- (-135:2.2);
						
\end{scope}	

\draw[<-]
	(0,-0.2) arc (-90:240:0.2);
	
\draw[->]
	(0,-1.3) arc (-90:240:0.2);

\end{scope}

\end{tikzpicture}
\caption{$g\bar{g}^{-1}a$-tiling.}
\label{tiling_gga4}
\end{figure}
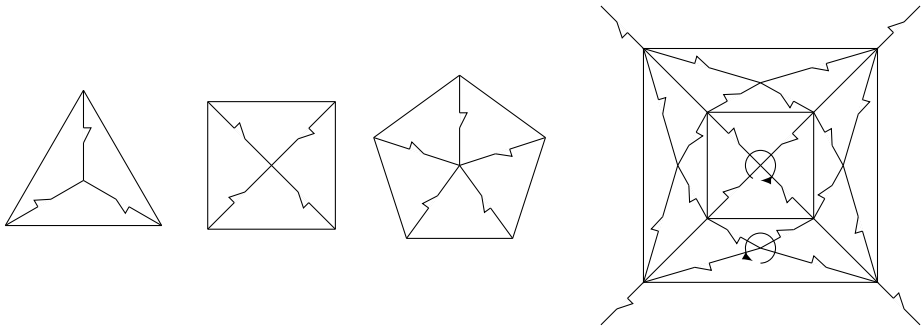

We note that, for $g\bar{g}^{-1}a$, each $N(1^k)$ has an orientation, and we may independently change the orientation of any $N(1^k)$ (i.e., flip the face) and still get a tiling. The fourth of Figure \ref{tiling_gga4} shows a $g\bar{g}^{-1}a$-triangular subdivision of the cube, with two faces having different orientations.
\end{proof}

\begin{proposition}
Tilings of the sphere by congruent $ra{\color{red} a'}$-triangles are the tetrahedron, the earth map tiling $E_{\triangle}1$ and its flip modification $FE_{\triangle}1$.   
\end{proposition}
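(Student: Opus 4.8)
The plan is to pair up the triangles along their $r$-edges, converting the tiling into a tiling of the sphere by congruent straight quadrilaterals, and then to apply a short Euler-characteristic argument. In the $ra a'$-triangle of Figure \ref{triangle} I take $\overline{12}$ to be the $r$-edge and $\overline{23},\overline{31}$ to be the straight edges $a'$ and $a$, whose lengths are distinct. An isometry of the sphere carries geodesics to geodesics, and two geodesic arcs of different lengths are not isometric, so in any tiling an $r$-edge is glued only to an $r$-edge, an $a$-edge only to an $a$-edge, and an $a'$-edge only to an $a'$-edge; in particular $f$ is even, with $\tfrac f2$ edges of each type, and the possibility of an $r$-edge glued to itself (which would identify corners $1$ and $2$) must be dismissed at the outset. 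Since $\overline{12}$ is $r$-symmetric, the two tiles meeting along an $r$-edge are related by the rotation $e^r$ by $\pi$ about its midpoint, and this is the only admissible gluing. Hence the union of the two tiles across an $r$-edge is, up to isometry, one fixed straight quadrilateral $P'$, with sides $\ell(a),\ell(a'),\ell(a),\ell(a')$ in cyclic order and angles $[1]+[2],[3],[1]+[2],[3]$. Pairing each triangle with its $r$-neighbour therefore presents the tiling as an edge-to-edge tiling of the sphere by congruent copies of $P'$ (edge-to-edge, because each triangle's unique $r$-edge is consumed inside its own pair, so every $a$- and $a'$-edge remains an edge shared by two copies of $P'$).

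Next I would classify tilings of the sphere by $P'$. The crucial point is $\ell(a)\neq\ell(a')$: the two sides meeting at each corner of $P'$ have unequal lengths, so around any vertex of the $P'$-tiling the lengths of consecutive edges alternate, and every vertex has even degree — in particular none has degree $3$. By Lemma \ref{anglesum}, $[1]+[2]+[3]=(1+\tfrac4f)\pi>\pi$. If every vertex had degree $\ge 4$ then, writing $V,E,F$ for the vertex, edge and face counts of the $P'$-tiling, $F=\tfrac E2$ with $V-E+F=2$ forces $E=2V-4$ while $2E\ge 4V$ forces $E\ge 2V$, a contradiction; so the $P'$-tiling has a vertex of degree $2$, where two corners of $P'$ sum to $2\pi$. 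As the corner angles of $P'$ are only $[1]+[2]$ and $[3]$, this forces $[1]+[2]=\pi$, or $[1]+[2]+[3]=2\pi$ (that is, $f=4$), or $[3]=\pi$; the last makes the prototile a bigon rather than a triangle and is discarded. If $f=4$, the $P'$-tiling is the $4$-gon dihedron, and splitting each quadrilateral along its $r$-diagonal recovers the tetrahedron, every vertex of type $123$. If $[1]+[2]=\pi$, then $P'$ is a spherical bigon with angles $[3]$ at its two genuine corners, each of its two sides consisting of an $a$-edge and an $a'$-edge meeting at a straight angle.

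In the bigon case the $P'$-tiling is the beach-ball tiling of the sphere by $m$ congruent bigons sharing two poles, with $m[3]=2\pi$ and $m$ even (since $a$- and $a'$-edges alternate at each pole); splitting each bigon along its $r$-diagonal yields the earth map tiling $E_{\triangle}1$, whose two poles have type $3^m$ and whose $m$ straight-angle points have type $1^2 2^2$. Finally, because $a$- and $a'$-edges are straight they also admit orientation-reversing gluings, so one may cut $E_{\triangle}1$ along a closed chain of $a$- and $a'$-edges through the two poles — which disturbs no $r$-pair — and reglue one side after a flip; carrying this out in every admissible way produces precisely the flip modification $FE_{\triangle}1$ and nothing more. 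I expect the bulk of the remaining work to lie in this final stage: verifying that the bigon tiling of the sphere is unique, that the only consistent re-gluing of $E_{\triangle}1$ is the single flip $FE_{\triangle}1$, and that the degenerate cases set aside above — an $r$-edge glued to itself, $P'$ failing to be embedded, $[3]=\pi$ — genuinely yield nothing new.
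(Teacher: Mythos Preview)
Your approach is the paper's: pair the triangles along the $r$-edge into congruent $a{\color{red} a'}a{\color{red} a'}$-quadrilaterals, force a degree-$2$ vertex in the quadrilateral tiling, and split into the $f=4$ case and the bigon case. Your self-contained Euler argument (even degrees from alternating side-lengths, then $E=2V-4$ versus $E\ge 2V$) is a clean replacement for the paper's appeal to Lemma~1 of \cite{cly}. One cosmetic point: in Figure~\ref{triangle} the $r$-edge of the $ra{\color{red} a'}$-triangle is $\overline{23}$, not $\overline{12}$, so your corner labels are permuted relative to the paper's throughout.

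Where your outline needs correction is the bigon case. Your assertion that ``the $P'$-tiling is the beach-ball'' and your later expectation that ``the bigon tiling of the sphere is unique'' are both false, and this is exactly where $FE_\triangle 1$ lives. Each side of the bigon $P'$ is an $a$-segment followed by an ${\color{red} a'}$-segment, so the bigon tiling is edge-to-edge only as a \emph{quadrilateral} tiling; as a $2$-gon tiling it may be non-side-to-side. The paper therefore classifies \emph{all} tilings of the sphere by a congruent $2$-gon (a two-line argument borrowed from Lemma~12 of \cite{cly}): either some vertex has type $1^k$, forcing the beach-ball, or every vertex is $1^k\pi$, forcing a single flipped variant (Figure~\ref{loontiling}). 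Reinserting the $r$-diagonals in these two yields $E_\triangle 1$ and $FE_\triangle 1$ directly. Your ``cut-and-reglue'' paragraph constructs $FE_\triangle 1$ but does not, on its own, show that nothing further occurs; it is the bigon classification that closes the argument. Note finally that $FE_\triangle 1$ exists only when the number $p$ of timezones is odd (the hemisphere for the flip is $q+\tfrac12$ timezones with $p=2q+1$), a constraint missing from your sketch.
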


\begin{proof}
The seventh of Figure \ref{pair} shows that a companion pair of two tiles form a quadrilateral $a{\color{red} a'}a{\color{red} a'}$. By Lemma 1 of \cite{cly}, there is no tiling with this quadrilateral as the prototile, such that all vertices have degree $\ge 3$. Therefore the quadrilateral tiling has degree $2$ vertices. This means that in a $ra{\color{red} a'}$-triangular tiling, either $123$ or $2^23^2$ is a vertex.

The angle sum of $123$ and Lemma \ref{anglesum} imply $f=4$. Then the tiling is the tetrahedron on the left of Figure \ref{tiling_raa1}. 

\begin{figure}[htp]
\centering
\begin{tikzpicture}[>=latex]

\begin{scope}[xshift=-2.5cm]

\draw
	(0,0) -- (210:1)
	(-30:1) to[out=80, in=-20] (90:1);

\draw[red]
	(0,0) -- (-30:1)
	(90:1) to[out=200, in=100] (210:1);

\begin{scope}
[decoration={markings, mark=at position 0.5 with {
\draw[white, thick] (0.2,0)--(-0.2,0);
\draw (0.2,0)--(0.1,0.1)--(-0.1,-0.1)--(-0.2,0);
}}]

\draw[postaction={decorate}]
	(0,0) -- (90:1);
	
\draw[postaction={decorate}]
	(210:1) to[out=-40, in=220] (-30:1);
					
\end{scope}

\end{scope}


\fill[gray!30]
	(0,1) rectangle (1.6,-1);

\foreach \a in {0,2,4}
\draw[xshift=0.8*\a cm]
	(0,0) -- (0.2,0) -- (0.3,-0.1) -- (0.5,0.1) -- (0.6,0) -- (0.8,0);

\foreach \a in {1,3}
\draw[xshift=0.8*\a cm]
	(0,0) -- (0.2,0) -- (0.3,0.1) -- (0.5,-0.1) -- (0.6,0) -- (0.8,0);

\foreach \a in {0,...,4}
{
\node at (0.4+0.8*\a,0.9) {\tiny 1};
\node at (0.4+0.8*\a,-0.9) {\tiny 1};
}

\foreach \a in {0,2,4}
{
\draw
	(0.8*\a,0) -- ++(0,1)
	(0.8+0.8*\a,0) -- ++(0,-1);
	
\draw[red]
	(0.8*\a,0) -- ++(0,-1)
	(0.8+0.8*\a,0) -- ++(0,1);
		
\node at (0.1+0.8*\a,0.15) {\tiny 2};
\node at (0.7+0.8*\a,0.15) {\tiny 3};
\node at (0.1+0.8*\a,-0.15) {\tiny 3};
\node at (0.7+0.8*\a,-0.15) {\tiny 2};
}

\foreach \a in {1,3}
{
\node at (0.1+0.8*\a,0.15) {\tiny 3};
\node at (0.7+0.8*\a,0.15) {\tiny 2};
\node at (0.1+0.8*\a,-0.15) {\tiny 2};
\node at (0.7+0.8*\a,-0.15) {\tiny 3};
}


\begin{scope}[xshift=6.5cm]

\foreach \a in {0,2,4}
\draw[xshift=-1cm + 0.4*\a cm]
	(0,0) -- (0.1,0) -- (0.15,-0.05) -- (0.25,0.05) -- (0.3,0) -- (0.4,0);
	
\foreach \a in {1,3}
\draw[xshift=-1cm + 0.4*\a cm]
	(0,0) -- (0.1,0) -- (0.15,0.05) -- (0.25,-0.05) -- (0.3,0) -- (0.4,0);
	
\foreach \a in {1,-1}	
{
\begin{scope}[scale=\a]

\draw[gray]
	(0,1) to[out=210, in=90] (-0.6,0)
	(0,1) to[out=-60, in=90] (0.2,0);

\draw[red!50]
	(0,1) to[out=240, in=90] (-0.2,0)
	(0,1) to[out=-30, in=90] (0.6,0);

\end{scope}
}
	
\draw[gray!50]
	(-45:1.3) -- (135:1.3);

\foreach \b in {1,-1}
{
\begin{scope}[scale=\b]
	
\draw
	(0:1) arc (0:90:1);
	
\draw[red]
	(0:1) arc (0:-90:1);

\node at (0,0.8) {\scriptsize $1^p$};
\node at (0.85,-0.2) {\scriptsize 2};
\node at (0.85,0.2) {\scriptsize 3};
	
\end{scope}
}

\end{scope}

\end{tikzpicture}
\caption{$ra{\color{red} a'}$-tiling.}
\label{tiling_raaA}
\end{figure}

Suppose $2^23^2$ is a vertex. The angle sum of the vertex implies $[2]+[3]=\pi$. This means that the quadrilateral $a{\color{red} a'}a{\color{red} a'}$ is actually a $2$-gon $G$ with $1$ as the top corner. The following argument is copied from the proof of Lemma 12 of \cite{cly}, and classifies all tilings (including non-side-to-side) by $G$. 

The corner combination at a vertex $\bullet$ is $1^k$ or $1^k\pi$, where $\pi$ appears when $\bullet$ lies in the interior of an edge of another tile. If $\bullet$ is $1^k$, then the $k$ copies of $G$ at $\bullet$ form the earth map tiling of the sphere on the left of Figure \ref{loontiling}.

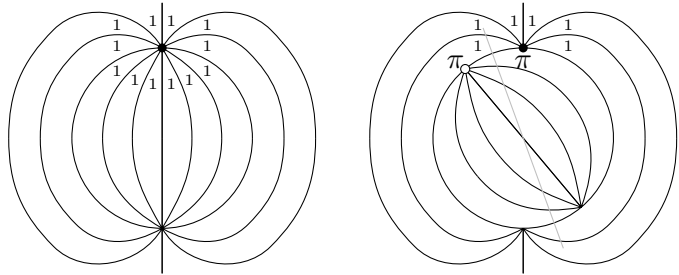
\begin{figure}[htp]
\centering
\begin{tikzpicture}[>=latex, scale=1.2]

\foreach \a in {1,-1}
\foreach \b in {1,-1}
\foreach \c in {0,1}
\draw[xshift=4*\c cm, rotate=40*\c, xscale=\a, yscale=\b]
	(0,0) -- (0,1) arc (90:0:1)
	(0.33,0) to[out=90,in=-60] (0,1)
	(0.66,0) to[out=90,in=-30] (0,1);

\foreach \a in {1,-1}
\foreach \b in {1,-1}
\foreach \c in {0,1}
\draw[xshift=4*\c cm, xscale=\a, yscale=\b]
	(0,1) -- (0,1.5)
	(1.35,0) to[out=90,in=-45] (45:1.35) to[out=135,in=30] (0,1)
	(1.7,0) to[out=90,in=-45] (45:1.7) to[out=135,in=60] (0,1);
	
\fill
	(0,1) circle (0.05)
	(4,1) circle (0.05);

\filldraw[xshift=4cm, fill=white]
	(130:1) circle (0.05);

\draw[xshift=4cm, gray!50]
	(110:1.3) -- (110:-1.3);
		
\foreach \a in {1,-1}
{
\begin{scope}[xscale=\a]
	
\node at (0.1,0.6) {\tiny 1};
\node at (0.3,0.65) {\tiny 1};
\node at (0.5,0.75) {\tiny 1};

\node at (0.1,1.3) {\tiny 1};
\node at (0.5,1.25) {\tiny 1};
\node at (0.5,1.02) {\tiny 1};

\end{scope}
}

\foreach \a in {1,-1}
{
\begin{scope}[xshift=4cm, xscale=\a]

\node at (0.1,1.3) {\tiny 1};
\node at (0.5,1.25) {\tiny 1};
\node at (0.5,1.02) {\tiny 1};

\end{scope}
}

\node at (4,0.85) {\small $\pi$};
\node at (3.25,0.85) {\small $\pi$};

\end{tikzpicture}
\caption{Tilings by $2$-gons.}
\label{loontiling}
\end{figure}

Next, we may assume all vertices are $1^k\pi$. Then the $k$ copies of $G$ at the $1^k$ part of the $\bullet$-vertex $1^k\pi$ form a half earth map tiling of a hemisphere. This is the part of the tiling outside the circle on the right of Figure \ref{loontiling}. Then any tile in the complementary hemisphere (which is inside the circle) has a $\circ$-vertex $1^k\pi$, and $k$ copies of $G$ at the $1^k$ part of $\circ$ form a half earth map tiling of the complementary hemisphere. The right of Figure \ref{loontiling} is obtained from the left by flipping the tiling of the complementary hemisphere with respect to the gray line. 

We add the $r$-edge to the tilings in Figure \ref{loontiling}, and require the triangular tiling to be side-to-side. From the earth map tiling on the left of Figure \ref{loontiling}, we get the earth map tiling $E_{\triangle}1$ in the middle of Figure \ref{tiling_raaA} (see Figures 5 of \cite{cly}). The tiling is obtained by repeating the timezone consisting of four $ra{\color{red} a'}$-triangles in the gray region. All the upward edges converge to the a vertex (north pole), and all the downward edges converge to the another vertex (south pole).

Suppose the number of timezones in $E_{\triangle}1$ is an odd number $p=2q+1$. This means $f=4p=8q+4$ and $[1]=\frac{2\pi}{2p}=\frac{\pi}{2q+1}$. Then $q$ and half timezones form a hemisphere. The middle of Figure \ref{tiling_raaA} is a hemisphere with $q=2$, and the right of Figure \ref{tiling_raaA} shows the hemisphere as a disk, together with the corner combinations along the boundary of the hemisphere.

If we glue two hemispheres in the usual way, such that the north and south poles match, and $a$-edges and ${\color{red} a'}$-edges match, then we get the earth map tiling $E_{\triangle}1$. If we flip one hemisphere with respect to the gray line, and then glue the two hemispheres together, then we get the flip modification tiling $FE_{\triangle}1$ (see Figure 20 of \cite{cly}).
\end{proof}

\begin{proposition}
Tilings of the sphere by congruent triangles of types $h\bar{h}r$, $h\bar{h}a$, $rr{\color{red} r'}$, $rra$ are the following:
\begin{itemize}
\item Tetrahedron $P_4$.
\item Triangular subdivisions  $T_{\triangle}P_n$ of all five Platonic solids.
\item Simple triangular subdivisions $S_{\triangle}P_6$ of the cube.
\item Earth map tilings $E_{\triangle}^I1$, $E_{\triangle}^J1$, $E_{\triangle}2$, and the flip modification $FE_{\triangle}^I1$ (for $h\bar{h}r$, $h\bar{h}a$), and rotation modifications $RE_{\triangle}^I1$ (for $rr{\color{red} r'}$, $rra$), $RE_{\triangle}^J1$, $RE_{\triangle}2$. 
\end{itemize} 
\end{proposition}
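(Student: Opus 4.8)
The plan is to group the tiles into companion pairs so that the triangular tiling becomes a quadrilateral tiling, to classify the quadrilateral tilings using known results, and then to recover the triangular tilings by reinserting diagonals. First I would make the companion-pair reduction explicit. In each of the four types the edge $\overline{23}$ is the ``odd'' edge --- the unique $r$-edge for $h\bar{h}r$ and $rr{\color{red}r'}$, the unique $a$-edge for $h\bar{h}a$ and $rra$ --- so $\overline{12},\overline{13}$ are congruent and $\overline{23}$ is distinguished, and the tiles pair up into congruent quadrilaterals sharing $\overline{23}$, as in Figure~\ref{pair}. When $\overline{23}$ is $r$-symmetric the two tiles glue in a unique way and form a quadrilateral $Q$ with cyclic corner sequence $[1],[2]+[3],[1],[2]+[3]$ (and $[2]=[3]$ since the triangle is isosceles); after straightening all edges $Q$ is a spherical rhombus, the same for $h\bar{h}r$ and $rr{\color{red}r'}$, only the edge decoration differing. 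When $\overline{23}$ is a straight arc there is also a reflected gluing, producing in addition a ``kite'' $Q'$ with corners $[1],2[2],[1],2[3]$, so the induced quadrilateral tiling may use $Q$ and $Q'$ together. In every case the quadrilateral tiling is side-to-side, with the same vertices and vertex degrees as the triangular tiling but with the $\overline{23}$-diagonals deleted; conversely, every side-to-side tiling by $Q$ (or by $\{Q,Q'\}$) is recut into a triangular tiling by reinserting one diagonal in each quadrilateral, and I would have to decide which reinsertions give a side-to-side triangular tiling with globally consistent curvilinear labels.

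Next I would dispose of the low-degree and degenerate cases. If the triangular tiling has a $123$-vertex, then $[1]+[2]+[3]=2\pi$, so Lemma~\ref{anglesum} forces $f=4$ and the tiling is the tetrahedron $P_4$. Otherwise, if one corner of the companion quadrilateral equals $\pi$ --- that is $[2]+[3]=\pi$, so $Q$ collapses to a lune, or $2[2]=\pi$ or $2[3]=\pi$ for $Q'$ --- the quadrilateral tiling is a tiling by $2$-gons, classified (as in the treatment of Lemma~12 of \cite{cly}) by the lune earth map tiling and its pole-flipped variants; reinserting the $\overline{23}$-diagonal in a side-to-side way then yields the earth map tilings $E_{\triangle}^I1$, $E_{\triangle}^J1$, $E_{\triangle}2$ and their modifications. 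Here the rigid $r$-diagonal admits only one reinsertion per pair, which forces the modification to be realised as a flip ($FE_{\triangle}^I1$, for $h\bar{h}r$ and $h\bar{h}a$), whereas the $a$-diagonal admits two reinsertions per pair, realising it as a rotation ($RE_{\triangle}^I1$, for $rr{\color{red}r'}$ and $rra$), and similarly gives $RE_{\triangle}^J1$ and $RE_{\triangle}2$. Tracking this flip-versus-rotation dichotomy is the delicate point of the step.

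In the remaining case $Q$ is a genuine quadrilateral (a spherical rhombus after straightening) all of whose corners are $<\pi$, and the quadrilateral tiling has all vertices of degree $\ge 3$. Here I would invoke the classification of side-to-side tilings of the sphere by such a quadrilateral (from the literature on quadrilateral tilings): these are the quadrilateral subdivisions of the Platonic solids. Cutting each quadrilateral along its $\overline{23}$-diagonal recovers exactly the triangular subdivisions $T_{\triangle}P_n$ for $n=4,6,8,12,20$ and, for the cube, in addition the simple triangular subdivision $S_{\triangle}P_6$; in each case one verifies that the isosceles shape forced by the triangle type is compatible with the vertex angle sums and that the decoration $h$ versus $\bar{h}$ (resp.\ $r$ versus ${\color{red}r'}$) can be propagated consistently around every vertex, so nothing outside the list survives. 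Realisability of all the listed tilings is then shown by the explicit constructions in the accompanying figures.

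The main obstacle is this last step together with the reinsertion bookkeeping: one must confirm that every side-to-side quadrilateral tiling by $Q$ really carries a globally consistent curvilinear triangular refinement, and that in the $a$-cases the two choices of reinsertion per companion pair (and the option of mixing $Q$ and $Q'$) produce precisely the listed flip and rotation modifications and nothing further. The angle-sum reductions of the earlier steps are routine given Lemma~\ref{anglesum} and the companion-pair formalism; the real work is the combinatorial matching of the quadrilateral tilings to the named triangular tilings and the edge-label consistency checks around vertices.
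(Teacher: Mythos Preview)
Your companion-pair reduction is the right instinct, and it is precisely the method the paper uses for the $raa$ case in the next proposition, where the companion quadrilateral $aaaa$ is a genuine straight rhombus and the rhombus classification of \cite{cly} applies directly. But for $h\bar{h}r$, $h\bar{h}a$, $rr{\color{red}r'}$, $rra$ the companion quadrilaterals are curvilinear ($h\bar{h}h\bar{h}$, $rrrr$, and their variants from Figure~\ref{pair}), and your ``straightening'' step is not justified: replacing a curvilinear edge by a geodesic changes the tangent directions at its endpoints and hence the corner angles, so the straightened tiling need not have the same vertex combinatorics. The rhombus classification in \cite{cly} is largely driven by angle sums and might transfer if argued carefully at the combinatorial level, but that is real work, not a citation. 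The paper sidesteps the issue by working directly with the triangular tiling: the fan structure at a vertex forces every vertex to have the form $1^k2^l3^l$, and an angle-sum enumeration (Table~\ref{avc}) followed by case analysis produces the list.

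Several specific points are also wrong. The claim $[2]=[3]$ ``since the triangle is isosceles'' is false for curvilinear triangles; there is no base-angle theorem here, and the paper never uses it. Your placement of all earth map tilings in the lune/degree-$2$ case is incorrect: only $E_{\triangle}^I1$ has $[2]+[3]=\pi$; the families $E_{\triangle}^J1$ and $E_{\triangle}2$ have $[2]+[3]\ne\pi$ and, in the quadrilateral picture, arise from the non-degenerate rhombus earth map $E_{\square}^R1$, not from $2$-gons. Finally, your flip-versus-rotation reasoning is inverted: the distinction is governed by whether the hemisphere boundary edges (the $\overline{12},\overline{13}$ sides) are $h$-type (satisfying $h=h^{-1}$, so a flip preserves them) or $r$-type (direction-sensitive, so only a rotation works), not by whether the diagonal $\overline{23}$ is $r$ or $a$ --- note that $h\bar{h}r$ and $h\bar{h}a$ have different diagonals yet both yield $FE_{\triangle}^I1$, while $rr{\color{red}r'}$ and $rra$ likewise have different diagonals yet both yield $RE_{\triangle}^I1$.
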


\begin{proof}
We use the concept of fan that was first introduced in \cite{cly5}. The edge $\overline{23}$ is distinguished from the other two edges in $h\bar{h}r$, $h\bar{h}a$, $rr{\color{red} r'}$, $rra$. At any vertex, the edge divides the corners at the vertex into several groups, which we call {\em fans}. The fans are illustrated in Figure \ref{fan}, with the edges $\overline{23}$ indicated by thick lines. A fan consists of a sequence of corners $a1\cdots 1b$, with $a,b$ being 2 or 3, and all corners between $a$ and $b$ are 1.

\begin{figure}[htp]
\centering
\begin{tikzpicture}[>=latex]

\fill (0,0) circle (0.1);

\foreach \a in {0,150,240}
\draw[very thick]
	(0,0) -- (\a:1.5);

\foreach \a in {0,...,11}
\draw
	(0,0) -- (30*\a:1.5);

\foreach \a in {1,2,3}
\draw[very thick]
	(30*\a:1.5) -- (30+30*\a:1.5);

\draw
	(0:1.5) -- (30:1.5)
	(120:1.5) -- (150:1.5);

\node at (15:0.4) {\tiny 2};
\node at (5:1.35) {\tiny 3};
\node at (25:1.35) {\tiny 1};

\foreach \a in {0,1,2}
\node at (45+30*\a:0.4) {\tiny 1};

\node at (135:0.4) {\tiny 3};
\node at (145:1.35) {\tiny 2};
\node at (125:1.35) {\tiny 1};

\draw[<->]
	(5:1.7) arc (5:145:1.7);
\draw[<->]
	(155:1.7) arc (155:235:1.7);
\draw[<->]
	(245:1.7) arc (245:355:1.7);

\node[rotate=-15] at (75:1.9) {fan};
\node[rotate=105] at (195:1.9) {fan};
\node[rotate=-150] at (-60:1.9) {fan};

\begin{scope}[shift={(4cm,0.2cm)}]

\foreach \a in {1,2,3}
\draw[rotate=45*\a]
	(0,0) -- (0.5,0) -- (0.7,-0.1) -- (0.9,0) -- (1.4,0);
	
\draw[thick]
	(-1.4,0) -- (1.4,0)
	(45:1.4) -- (90:1.4) -- (135:1.4);

\draw
	(0:1.4) -- (15:1.3) -- (22.5:1.4) -- (30:1.3) -- (45:1.4)
	(180:1.4) -- (165:1.3) -- (157.5:1.2) -- (150:1.3) -- (135:1.4);
	
\fill
	(0,0) circle (0.1);
	
\node at (22.5:0.3) {\tiny 2};
\node at (6:1.2) {\tiny 3};
\node at (39:1.2) {\tiny 1};

\node at (157.5:0.3) {\tiny 3};
\node at (174:1.2) {\tiny 2};
\node at (141:1.2) {\tiny 1};

\foreach \a in {1,2}
{
\begin{scope}[rotate=45*\a]

\node at (22.5:0.3) {\tiny 1};
\node at (6:1.2) {\tiny 3};
\node at (39:1.2) {\tiny 2};

\end{scope}
}

\end{scope}

\begin{scope}[shift={(4cm,-1.6cm)}]

\foreach \a in {1,2,3}
\draw[rotate=45*\a]
	(0,0) -- (0.5,0) -- (0.6,-0.1) -- (0.8,0.1) -- (0.9,0) -- (1.4,0);
	
\foreach \a in {0,4}
\draw[rotate=45*\a, thick, red]
	(0,0) -- (0.5,0) -- (0.6,-0.1) -- (0.8,0.1) -- (0.9,0) -- (1.4,0);
	
\foreach \a in {0,3}
\draw[rotate=45*\a]
	(0:1.4) -- (12:1.3) -- (18:1.4) -- (27:1.2) -- (33:1.3) -- (45:1.4);

\foreach \a in {1,2}
\draw[rotate=45*\a, thick, red]
	(0:1.4) -- (12:1.3) -- (18:1.4) -- (27:1.2) -- (33:1.3) -- (45:1.4);
		
\fill
	(0,0) circle (0.1);
	
\node at (22.5:0.3) {\tiny 2};
\node at (6:1.2) {\tiny 3};
\node at (39:1.2) {\tiny 1};

\node at (157.5:0.3) {\tiny 3};
\node at (174:1.2) {\tiny 2};
\node at (141:1.2) {\tiny 1};

\foreach \a in {1,2}
{
\begin{scope}[rotate=45*\a]

\node at (22.5:0.3) {\tiny 1};
\node at (6:1.2) {\tiny 3};
\node at (39:1.2) {\tiny 2};

\end{scope}
}

\end{scope}

\end{tikzpicture}
\caption{Fans at a vertex.}
\label{fan}
\end{figure}
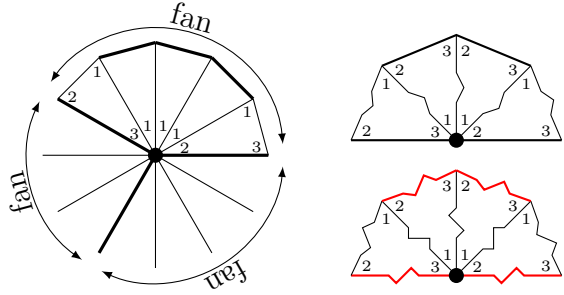

The upper right of Figure \ref{fan} shows one fan at a vertex in an $h\bar{h}a$-tiling. We find a fan must be $21\cdots 1 3$, and cannot be $21\cdots 1 2$ or $31\cdots 1 3$. The lower right picture shows that the same happens to $rr{\color{red} r'}$ tiling. Actually, the same also happens to $h\bar{h}r$ and $rra$. A consequence of the observation is that the numbers of the corners $2$ and $3$ at any vertex are the same. In other words, all vertices $1^k2^l3^l$.

By (3.2) and (3.3) in Section 3.1 of \cite{cly}, the number $v_i$ of vertices of degree $i$ satisfy
\[
3v_3+2v_4+v_5=12+v_7+2v_8+3v_9+\cdots.
\]
Therefore a triangular tiling of the sphere has a vertex of degree 3, 4, or 5. The vertices $1^k2^l3^l$ of such degrees are $123,1^3,1^4,1^5,1^223,12^23^2,1^323,2^23^2$. We assume one such vertex, and combine with Lemma \ref{anglesum} to get the angle values of the corners. 
\begin{align*}
123 &\colon 
[1]+[2]+[3]=2\pi, \; 
f=4.\\
1^3 &\colon 
[1]=\tfrac{2}{3}\pi,\;
[2]+[3]=(\tfrac{1}{3}+\tfrac{4}{f})\pi. \\
1^4 &\colon
[1]=\tfrac{1}{2}\pi,\;
[2]+[3]=(\tfrac{1}{2}+\tfrac{4}{f})\pi. \\
1^5 &\colon
[1]=\tfrac{2}{5}\pi,\;
[2]+[3]=(\tfrac{3}{5}+\tfrac{4}{f})\pi. \\
1^223 &\colon
[1]=(1-\tfrac{4}{f})\pi,\;
[2]+[3]=\tfrac{8}{f}\pi. \\
12^23^2 &\colon
[1]=\tfrac{8}{f}\pi,\;
[2]+[3]=(1-\tfrac{4}{f})\pi. \\
1^323 &\colon
[1]=(\tfrac{1}{2}-\tfrac{2}{f})\pi,\;
[2]+[3]=(\tfrac{1}{2}+\tfrac{6}{f})\pi. \\
2^23^2 &\colon [1]=\tfrac{4}{f}\pi,\;
[2]+[3]=\pi. 
\end{align*}

We remark that the number $f$ of tiles is even. The reason is that, for side-to-side triangular tilings, $3f$ is twice of the number of edges. 

Suppose we always have $l=k$ in $1^k2^l3^l$. Then by Lemma \ref{anglesum}, we know $k=l=1$ (otherwise the angle sum $>2\pi$). This means $\alpha\beta\gamma$ is the only vertex. This is the first case of Table \ref{avc}.

Suppose we do not always have $k=l$. The total number of each corner in the whole tiling is the number $f$ of tiles. Therefore the total numbers of $1,2,3$ are the same. This implies that we have a vertex $1^k2^l3^l$ with $k>l$, and also a vertex with $k<l$. 

If $1^3$ is a vertex, then the vertex $1^k2^l3^l$ means $k\tfrac{2}{3}+l(\tfrac{1}{3}+\tfrac{4}{f})=2$. This is the same as $f=\frac{12l}{6-2k-l}$. We know there are vertices satisfying $k<l$. The vertex also satisfies $k+2l\ge 3$. The two inequalities together is equivalent to $k<l\ge 2$. Then we find all non-negative integers $k,l$ satisfying $k<l\ge 2$, such that $f=\frac{12l}{6-2k-l}$ is an even integer $\ge 4$. The result is $f=6,12,24,36,60$. Then we get specific values of $[1]$ and $[2]+[3]$, which we may use to determine all the possible vertices $1^k2^l3^l$. These are the cases in Table \ref{avc} that include $1^3$ as one of the vertices.

We carry out the same argument for $1^4,1^5,1^323$. We get $f=8,16,24$ for $1^4$, and $f=10,20,60$ for $1^5$, and $f=12,20,36$ for $1^323$. Then we get all the cases in Table \ref{avc} that has fixed $f$.

\renewcommand{\arraystretch}{1.2}

\begin{table}[htp]
\centering
\begin{tabular}{|c|c|c|c|} 
\hline
$f$
& [1]
& [2]+[3]
& vertex \\
\hline  \hline 
4
& 
&
& $123$  \\
\hline
$6$
& $\frac{1}{3}\pi$ 
& $\pi$ 
& $1^3,2^23^2$ (3) \\
\hline
8
& $\frac{1}{2}\pi$ 
& $\pi$ 
& $1^4,1^223,2^23^2$ (3) \\
\hline
$10$
& $\frac{2}{5}\pi$ 
& $\pi$ 
& $1^5,2^23^2$ (3) \\
\hline
\multirow{2}{*}{$12$}
& $\frac{2}{3}\pi$ 
& $\frac{2}{3}\pi$ 
& $1^3,1^223,12^23^2,2^33^3$ \\
\cline{2-4}
& $\frac{1}{3}\pi$ 
& $\pi$ 
& $1^323,1^6,2^23^2$ (3)  \\
\hline
$16$
& $\frac{1}{2}\pi$ 
& $\frac{3}{4}\pi$ 
& $1^4,12^23^2$ (2) \\
\hline
$20$
& $\frac{2}{5}\pi$ 
& $\frac{4}{5}\pi$ 
& $1^5,1^323,12^23^2$ (2) \\
\hline
\multirow{2}{*}{$24$}
& $\frac{2}{3}\pi$ 
& $\frac{1}{2}\pi$ 
& $1^3,2^43^4$ \\
\cline{2-4} 
& $\frac{1}{2}\pi$ 
& $\frac{2}{3}\pi$
& $1^4,2^33^3$ \\
\hline
\multirow{2}{*}{$36$}
& $\frac{2}{3}\pi$
& $\frac{4}{9}\pi$ 
& $1^3,12^33^3$ \\
\cline{2-4} 
& $\frac{4}{9}\pi$ 
& $\frac{2}{3}\pi$
& $1^323,2^33^3$ \\
\hline
\multirow{2}{*}{$60$}
& $\frac{2}{3}\pi$
& $\frac{2}{5}\pi$ 
& $1^3,2^53^5$ \\
\cline{2-4} 
& $\frac{2}{5}\pi$ 
& $\frac{2}{3}\pi$
& $1^5,2^33^3$ \\
\hline
(1)  
& $\frac{4}{f}\pi$ 
& $\pi$ 
& $2^23^2,1^{\frac{f}{4}}23,1^{\frac{f}{2}}$ \\
\hline
(2)
& $(1-\frac{4}{f})\pi$ 
& $\frac{8}{f}\pi$ 
& $1^223,12^{\frac{f+4}{8}}3^{\frac{f+4}{8}},2^{\frac{f}{4}}3^{\frac{f}{4}}$ \\
\hline 
(3)
& $\frac{8}{f}\pi$ 
& $(1-\frac{4}{f})\pi$ 
& $12^23^2,1^{\frac{f+4}{8}}23,1^{\frac{f}{4}}$ \\
\hline
\end{tabular}
\caption{All the vertices for tilings of types $h\bar{h}r$, $h\bar{h}a$, $rr{\color{red} r'}$, $rra$.}
\label{avc}
\end{table}

If $1^223$ is a vertex, then by the angle values, a vertex is $1^223,12^l3^l,2^l3^l$. Then we find $l=\frac{f+4}{8}$ in $12^l3^l$, and $l=\frac{f}{4}$ in $2^l3^l$. If $12^23^2$ or $2^23^2$ is a vertex, then we get all the vertices in similar way. These cases allow variable $f$, and become the cases (1), (2), (3) in Table \ref{avc}.

Some cases with the fixed $f$ are labeled by (2) or (3). These are special cases of the corresponding variable $f$ cases. Therefore we will not separately discuss these cases. 

\medskip

\noindent {\bf Case $f=4$.} Tetrahedron $P_4$.

\medskip

The only vertex is $123$. The tiling is tetrahedron, given by Figure \ref{tiling_hha2}.

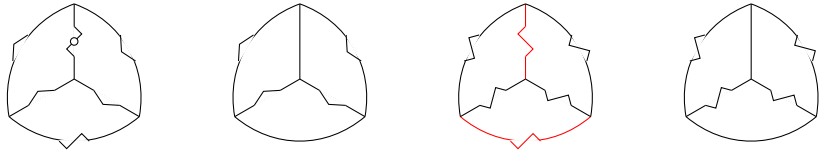
\begin{figure}[htp]
\centering
\begin{tikzpicture}[>=latex,scale=1]


\begin{scope}[yshift=2.5cm]


\begin{scope}
[decoration={markings,mark=at position 0.5 with {
\draw[white, thick] (0.2,0)--(-0.2,0);
\draw (0.2,0)--(0,0.1)--(-0.2,0);}}]

\foreach \a in {0,1}
{
\begin{scope}[xshift=3*\a cm]
			
\draw[postaction={decorate}]
	(-30:1) to[out=80, in=-20] (90:1);
\draw[postaction={decorate}]
	(210:1) to[out=100, in=200] (90:1);
\draw[postaction={decorate}]
	(210:1) -- (0,0);	
\draw[postaction={decorate}]
	(-30:1) -- (0,0);				

\end{scope}	
}
		
\end{scope}	


\begin{scope}
[decoration={markings,mark=at position 0.5 with {
\draw[white, thick] (0.2,0)--(-0.2,0);
\draw (0.2,0)--(0.1,0.1)--(-0.1,-0.1)--(-0.2,0);
}}]

\draw[postaction={decorate}]
	(210:1) to[out=-40, in=220] (-30:1);

\draw[postaction={decorate}, red, xshift=6cm]
	(210:1) to[out=-40, in=220] (-30:1);
\draw[postaction={decorate}, red, xshift=6cm]
	(0,0) -- (90:1);
	
\foreach \a in {2,3}
{
\begin{scope}[xshift=3*\a cm]
			
\draw[postaction={decorate}]
	(-30:1) to[out=80, in=-20] (90:1);
\draw[postaction={decorate}]
	(210:1) to[out=100, in=200] (90:1);
\draw[postaction={decorate}]
	(210:1) -- (0,0);	
\draw[postaction={decorate}]
	(-30:1) -- (0,0);				

\end{scope}	
}
	
\end{scope}


\begin{scope}
[decoration={markings,mark=at position 0.5 with {
\draw[white, thick] (0.2,0)--(-0.2,0);
\draw (0.2,0)--(0.1,-0.1)--(-0.1,0.1)--(-0.2,0);
\filldraw[fill=white] (0,0) circle (0.05);
}}]

\draw[postaction={decorate}]
	(0,0) -- (90:1);
	
\end{scope}

\foreach \a in {1,3}
\draw[xshift=3*\a cm]
	(0,0) -- (90:1)
	(210:1) to[out=-40, in=220] (-30:1);
	
\end{scope}

\end{tikzpicture}
\caption{Tetrahedron tiling of types $h\bar{h}r$, $h\bar{h}a$, $rr{\color{red} r'}$, $rra$.}
\label{tiling_hha2}
\end{figure}

\medskip

\noindent {\bf Case $f=24,60$.} Triangular subdivision $T_{\triangle}P_n$ of Platonic solid $P_n$.

\medskip

Figure \ref{tiling_hha4} shows that the tiles at the vertex $1^k$ ($k=3,4,5$) form unique neighborhood tiling $N(1^k)$. Then the tiling by $N(1^k)$ is a Platonic solid $P_n$, $n=4,6,8,12,20$, and the whole tiling is the triangular subdivision $T_{\triangle}P_n$ of Platonic solid.

\begin{figure}[htp]
\centering
\begin{tikzpicture}[>=latex]


\begin{scope}
[decoration={markings,mark=at position 0.5 with {
\draw[white, thick] (0.2,0)--(-0.2,0);
\draw (0.2,0)--(0,0.1)--(-0.2,0);}}]

\foreach \a in {0,1,2,3}
\foreach \b in {0,1}
\draw[postaction={decorate}, xshift=2.5*\b cm, rotate=90*\a]
	(45:1) -- (0,0);
			
\end{scope}	


\begin{scope}
[decoration={markings,mark=at position 0.5 with {
\draw[white,thick](0.2,0)--(-0.2,0);
\draw (0.2,0)--(0.1,0.1)--(-0.1,-0.1)--(-0.2,0);}}]

\foreach \a in {0,1,2,3}
{

\draw[postaction={decorate}, rotate=90*\a]
	(45:1) -- (-45:1);

\foreach \b in {2,3}
\draw[postaction={decorate}, xshift=2.5*\b cm, rotate=90*\a]
	(45:1) -- (0,0);

\draw[postaction={decorate}, red, xshift=5cm, rotate=90*\a]
	(45:1) -- (-45:1);

}
		
\end{scope}

\foreach \a in {0,1,2,3}
\foreach \b in {1,3}
\draw[postaction={decorate}, xshift=2.5*\b cm, rotate=90*\a]
	(45:1) -- (-45:1);

\end{tikzpicture}
\caption{$N(1^4)$ of types $h\bar{h}r$, $h\bar{h}a$, $rr{\color{red} r'}$, $rra$.}
\label{tiling_hha4}
\end{figure}

The triangular subdivision tilings $T_{\triangle}P_n$ are unique for $h\bar{h}r$ and $rr{\color{red} r'}$. The left of Figure \ref{tiling_hha4A} shows the $h\bar{h}r$-triangular subdivision of the cube $T_{\triangle}P_6$. However, for $h\bar{h}a$ and $rra$, we may independently change the orientations of $N(1^k)$. The right of Figure \ref{tiling_hha4A} shows one $h\bar{h}a$-triangular subdivision of the cube $T_{\triangle}P_6$, in which two faces have different orientations.

\begin{figure}[htp]
\centering
\begin{tikzpicture}[>=latex]


\foreach \a in {0,1,2,3}
{
\begin{scope}
[decoration={markings,mark=at position 0.5 with {
\draw[white, thick] (0.2,0)--(-0.2,0);
\draw (0.2,0)--(0,0.1)--(-0.2,0);}},
rotate=90*\a]

\draw[postaction={decorate}]
	(45:1) -- (0,0);
\draw[postaction={decorate}]
	(-45:1) -- (0:1.1);
\draw[postaction={decorate}]
	(45:1) -- (0:1.1);
\draw[postaction={decorate}]
	(-45:2.2) -- (0:1.1);
\draw[postaction={decorate}]
	(45:2.2) -- (0:1.1);

\draw[postaction={decorate}]
	(45:2.2) -- (45:3.2);
					
\end{scope}	
}


\begin{scope}
[decoration={markings,mark=at position 0.5 with {
\draw[white,thick](0.2,0)--(-0.2,0);
\draw (0.2,0)--(0.1,0.1)--(-0.1,-0.1)--(-0.2,0);}}]

\foreach \a in {0,1,2,3}
{
\draw[postaction={decorate}, rotate=90*\a]
	(45:1) -- (-45:1);
\draw[postaction={decorate}, rotate=90*\a]
	(45:1) -- (45:2.2);
\draw[postaction={decorate}, rotate=90*\a]
	(45:2.2) -- (-45:2.2);
}
		
\end{scope}

\begin{scope}[xshift=6cm]

\foreach \a in {0,1,2,3}
\draw[rotate=90*\a]
	(-45:1) -- (45:1) -- (45:2.2) -- (-45:2.2);


\begin{scope}
[decoration={markings,mark=at position 0.5 with {
\draw[white, thick] (0.2,0)--(-0.2,0);
\draw (0.2,0)--(0,0.1)--(-0.2,0);}}]

\foreach \a in {0,1,2,3}
{
\draw[postaction={decorate}, rotate=90*\a]
	(45:1) -- (0,0);
\draw[postaction={decorate}, rotate=90*\a]
	(45:2.2) -- (45:3.2);
}

\foreach \a in {0,1,2}
{	
\begin{scope}[rotate=90*\a]

\draw[postaction={decorate}]
	(-45:1) -- (0:1.1);
\draw[postaction={decorate}]
	(45:1) -- (0:1.1);
\draw[postaction={decorate}]
	(-45:2.2) -- (0:1.1);
\draw[postaction={decorate}]
	(45:2.2) -- (0:1.1);

\end{scope}
}

\draw[postaction={decorate}]
	(0,-1.1) -- (-45:1);
\draw[postaction={decorate}]
	(0,-1.1) -- (-135:1);
\draw[postaction={decorate}]
	(0,-1.1) -- (-45:2.2);
\draw[postaction={decorate}]
	(0,-1.1) -- (-135:2.2);

\end{scope}

\draw[<-]
	(0,-0.2) arc (-90:240:0.2);
	
\draw[->]
	(0,-1.3) arc (-90:240:0.2);

\end{scope}

\end{tikzpicture}
\caption{Triangular subdivisions $T_{\triangle}P_6$ for $h\bar{h}r$ and $h\bar{h}a$.}
\label{tiling_hha4A}
\end{figure}
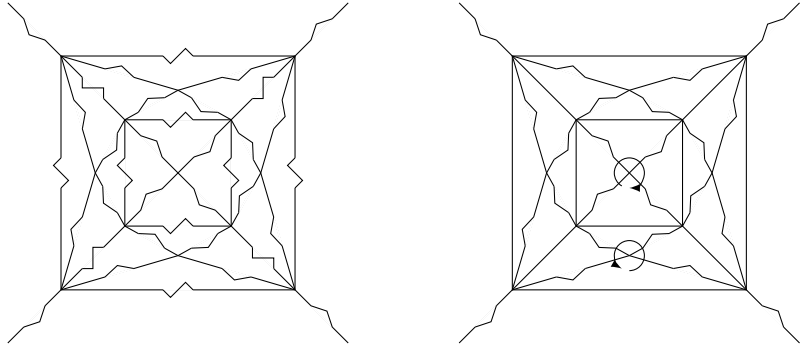

\medskip

\noindent {\bf Case $f=36$.} No tiling.

\medskip

We consider the tiling by the quadrilaterals formed by the companion pairs. Although the quadrilaterals may not be congruent, such as the second and third of Figure \ref{pair}, or the fifth and sixth of Figure \ref{pair}, we ignore the difference between $2$ and $3$, and ignore the difference between $h,\bar{h}$, and between $r,r^{-1}$ ($r$ becomes $r^{-1}$ if we flip $rra$). Then the quadrilateral tiling has corners $\alpha,\beta,\alpha,\beta$, with one of $\alpha,\beta$ being $1$, and the other being any of $22,23,33$. Then the vertices of the quadrilateral tiling are $\alpha^3\beta,\beta^3$. Then the argument based on Figure 34 of \cite{cly} still works, and leads to a contradiction.

\medskip

\noindent {\bf Case $f=12$.} Simple triangular subdivision $S_{\triangle}P_6$ of cube $P_6$.

\medskip

In tilings of types $h\bar{h}r$, $h\bar{h}a$, $rr{\color{red} r'}$, $rra$, the tiles form companion pairs sharing $r$-edge or $a$-edge. For $f=12$, by the corner combinations at vertices in Table \ref{avc}, the companion pairs form a quadrilateral tiling with all vertices having degree $3$. This is the cube. Therefore the triangle tilings are simple triangular subdivisions of the cube $S_{\triangle}P_6$ in Section 2.3 of \cite{cly}, which use diagonals to divide all quadrilaterals into triangle pairs. By Figure 10 of \cite{cly}, there are seven non-equivalent ways of dividing the cube faces into halves. 

For $h\bar{h}r$, the first of Figure \ref{pair} shows that the faces of cube are $h\bar{h}h\bar{h}$. For $h\bar{h}a$, the second and third of Figure \ref{pair} show that the faces of cube are $h\bar{h}h\bar{h}$ or $hh\bar{h}\bar{h}$. On the left of Figure \ref{tiling_hha3A}, we indicate the two quadrilaterals by connecting the two $\bar{h}$-edges with dotted lines. This is a much easier way to visualise the cube, and we can easily find that there are exactly four cubes with $h\bar{h}h\bar{h}$ or $hh\bar{h}\bar{h}$ as faces, on the right of Figure \ref{tiling_hha3A}.

\begin{figure}[htp]
\centering
\begin{tikzpicture}[>=latex]

\begin{scope}[shift={(-4cm,0.2cm)}]


\begin{scope}
[decoration={markings,mark=at position 0.5 with {
\draw[white, thick] (0.2,0)--(-0.2,0);
\draw (0.2,0)--(0,0.1)--(-0.2,0);}}]

\foreach \a in {0,1}
{
\draw[postaction={decorate}, rotate=180*\a]
	(0.6,-0.6) -- (0.6,0.6);
\draw[postaction={decorate}, rotate=180*\a]
	(-0.6,0.6) -- (0.6,0.6);
\draw[postaction={decorate}, xshift=1.8cm, rotate=90*\a]
	(0.6,-0.6) -- (-0.6,-0.6);
\draw[postaction={decorate}, xshift=1.8cm, rotate=90*\a]
	(0.6,0.6) -- (-0.6,0.6);
}
			
\end{scope}	

\draw[densely dotted]
	(-0.5,0) -- (0.5,0);

\draw[densely dotted, xshift=1.8cm]
	(-0.5,0) to[out=0, in=-90] (0,0.5);

\node at (0,-1) {$h\bar{h}h\bar{h}$};
\node at (1.8,-1) {$hh\bar{h}\bar{h}$};

\end{scope}


\foreach \a in {0,1,2,3}
\foreach \b in {0,1,2,3}
\draw[xshift=2.5*\b cm, rotate=90*\a]
	(-0.4,0.4) -- (0.4,0.4) -- (0.8,0.8) -- (-0.8,0.8);

\begin{scope}[densely dotted]

\foreach \a in {0,1,2}
\draw
	(-0.4+2.5*\a,0) -- ++(0.8,0);

\foreach \a in {-1,1}
\draw[scale=\a]
	(0.6,0.6) -- (0.6,-0.6)
	(0,0.4) -- (0,0.8);
	
\foreach \a in {0,1}
\draw[xshift=5*\a cm, rotate=90*\a]
	(-0.8,0) to[out=180, in=180]
	(-0.8,-1) -- (0.8,-1) to[out=0, in=0]
	(0.8,0);
	
\draw[xshift=2.5 cm]
	(-0.6,0.6) -- (-0.6,-0.6)
	(0,-0.4) -- (0,-0.8);

\foreach \a in {1,2,3}	
\draw[xshift=2.5*\a cm]	
	(0,0.4) to[out=90, in=180] (0.6,0.6)
	(0.6,-0.6) to[out=90, in=180] (0.8,0);
	
\foreach \a in {1,3}	
\draw[xshift=2.5*\a cm]	
	(0,0.8) to[out=90, in=45] 
	(-0.9,0.9) to[out=225, in=180] 
	(-0.8,0);

\draw[xshift=5 cm]	
	(0,-0.4) to[out=-90, in=0] (-0.6,-0.6)
	(-0.6,0.6) to[out=-90, in=0] (-0.8,0);
		
\draw[xshift=7.5 cm]	
	(0,-0.8) to[out=90, in=0] (-0.6,-0.6)
	(-0.6,0.6) to[out=-90, in=180] (-0.4,0)
	(0,-0.4) to[out=90, in=180] (0.4,0);

\end{scope}
	
\end{tikzpicture}
\caption{Cubes with $h\bar{h}h\bar{h}$ and $hh\bar{h}\bar{h}$ faces.}
\label{tiling_hha3A}
\end{figure}
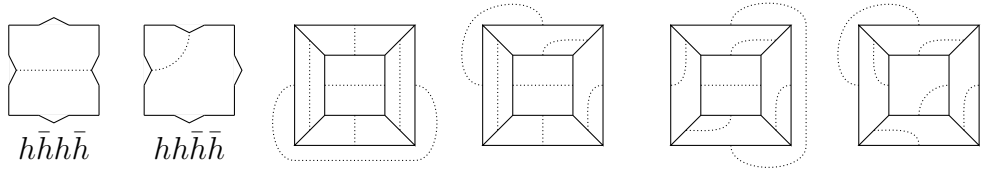

The cube from an $h\bar{h}r$-tiling can only have $h\bar{h}h\bar{h}$ as faces. This means the cube can only be the first of the four. For each face of the cube, we can take either of the two diagonals of $h\bar{h}h\bar{h}$ as $r$ to divide the quadrilateral face into two $h\bar{h}r$-triangles. By Figure 10 of \cite{cly}, there are seven non-equivalent $h\bar{h}r$-tilings, four of which are given by the first row of Figure \ref{tiling_hha3B}. We remark that the first and second tilings differ only in the choice of the diagonal for the central face. If one choice is $r$, then the other choice is $r^{-1}$.

\begin{figure}[htp]
\centering
\begin{tikzpicture}[>=latex,scale=1]



\begin{scope}
[decoration={markings,mark=at position 0.5 with {
\draw[white, thick] (0.12,0)--(-0.12,0);
\draw (0.12,0)--(0,0.1)--(-0.12,0);}}]


\foreach \a in {0,...,3}
\foreach \b in {1,-1}
{
\begin{scope}[xshift=3*\a cm, scale=\b]
			
\draw[postaction={decorate}]
	(1,1) -- (-1,1);				
\draw[postaction={decorate}]
	(1,1) -- (1,-1);
\draw[postaction={decorate}]
	(-0.4,0.4) -- (0.4,0.4);
\draw[postaction={decorate}]
	(0.4,-0.4) -- (0.4,0.4);
\draw[postaction={decorate}]
	(1,1) -- (0.4,0.4);
\draw[postaction={decorate}]
	(0.4,-0.4) -- (1,-1);
				
\end{scope}	
}


\begin{scope}[yshift=-3cm]

\foreach \a in {0,1}
\foreach \b in {1,-1}
{
\begin{scope}[xshift=3*\a cm, scale=\b]
			
\draw[postaction={decorate}]
	(1,1) -- (-1,1);				
\draw[postaction={decorate}]
	(1,1) -- (1,-1);
\draw[postaction={decorate}]
	(-0.4,0.4) -- (0.4,0.4);
\draw[postaction={decorate}]
	(0.4,-0.4) -- (0.4,0.4);
\draw[postaction={decorate}]
	(1,1) -- (0.4,0.4);
\draw[postaction={decorate}]
	(0.4,-0.4) -- (1,-1);
				
\end{scope}	
}


\begin{scope}[xshift=6cm]

\draw[postaction={decorate}]
	(-0.4,0.4) -- (0.4,0.4);				
\draw[postaction={decorate}]
	(0.4,-0.4) -- (-0.4,-0.4);
\draw[postaction={decorate}]
	(0.4,-0.4) -- (0.4,0.4);				
\draw[postaction={decorate}]
	(-0.4,0.4) -- (-0.4,-0.4);

\draw[postaction={decorate}]
	(0.4,0.4) -- (1,1);
\draw[postaction={decorate}]
	(0.4,-0.4) -- (1,-1);
\draw[postaction={decorate}]
	(-1,-1) -- (-0.4,-0.4);
\draw[postaction={decorate}]
	(-0.4,0.4) -- (-1,1);
					
\draw[postaction={decorate}]
	(-1,1) -- (1,1);				
\draw[postaction={decorate}]
	(-1,-1) -- (1,-1);
\draw[postaction={decorate}]
	(-1,-1) -- (-1,1);				
\draw[postaction={decorate}]
	(1,-1) -- (1,1);
						
\end{scope}


\begin{scope}[xshift=9cm]

\draw[postaction={decorate}]
	(-0.4,0.4) -- (0.4,0.4);				
\draw[postaction={decorate}]
	(-0.4,-0.4) -- (0.4,-0.4);
\draw[postaction={decorate}]
	(0.4,-0.4) -- (0.4,0.4);				
\draw[postaction={decorate}]
	(-0.4,-0.4) -- (-0.4,0.4);
				
\draw[postaction={decorate}]
	(0.4,0.4) -- (1,1);
\draw[postaction={decorate}]
	(-0.4,-0.4) -- (-1,-1);
\draw[postaction={decorate}]
	(-0.4,0.4) -- (-1,1);
\draw[postaction={decorate}]
	(0.4,-0.4) -- (1,-1);
	
\draw[postaction={decorate}]
	(-1,1) -- (1,1);				
\draw[postaction={decorate}]
	(-1,-1) -- (1,-1);
\draw[postaction={decorate}]
	(1,-1) -- (1,1);				
\draw[postaction={decorate}]
	(-1,-1) -- (-1,1);
							
\end{scope}

\end{scope}

\end{scope}


\begin{scope}
[decoration={markings,mark=at position 0.5 with {
\draw[white, thick] (0.12,0)--(-0.12,0);
\draw (0.12,0)--(0.06,0.06)--(-0.06,-0.06)--(-0.12,0);
}}]


\foreach \a in {0,1,3}
{
\begin{scope}[xshift=3*\a cm]

\draw[postaction={decorate}]
	(-0.4,0.4) -- (-1,-1);
\draw[postaction={decorate}]
	(0.4,-0.4) -- (-1,-1);	

\end{scope}
}

\foreach \a in {0,1}
{
\begin{scope}[xshift=3*\a cm]

\draw[postaction={decorate}]
	(-0.4,0.4) -- (1,1);
\draw[postaction={decorate}]
	(0.4,-0.4) -- (1,1);
	
\end{scope}
}

\foreach \a in {0,3}
\draw[postaction={decorate}, xshift=3*\a cm]
	(0.4,-0.4) -- (-0.4,0.4);

\foreach \a in {1,-1}
\draw[postaction={decorate}, xshift=6 cm, scale=\a]
	(-0.4,0.4) -- (1,1);

\foreach \a in {0,1,2}	
\draw[postaction={decorate}, xshift=3*\a cm]
	(1,1) -- (1.3,1.3);

\end{scope}


\begin{scope}
[decoration={markings,mark=at position 0.5 with {
\draw[white, thick] (0.12,0)--(-0.12,0);
\draw (0.12,0)--(0.06,-0.06)--(-0.06,0.06)--(-0.12,0);
\filldraw[fill=white] (0,0) circle (0.03);
}}]

\foreach \a in {1,2}
\draw[postaction={decorate}, xshift=3*\a cm]
	(-0.4,-0.4) -- (0.4,0.4);

\foreach \a in {1,-1}
\draw[postaction={decorate}, xshift=6 cm, scale=\a]
	(0.4,0.4) -- (1,-1);

\begin{scope}[xshift=9cm]

\draw[postaction={decorate}]
	(0.4,0.4) -- (-1,1);
\draw[postaction={decorate}]
	(0.4,0.4) -- (1,-1);
\draw[postaction={decorate}]
	(-1.3,1.3) -- (-1,1);
	
\end{scope}

\end{scope}


\foreach \a in {0,1,2}
\draw[xshift=3*\a cm]
	(-1,-1) -- (-1.3,-1.3);

\draw[xshift=9 cm]
	(1.3,-1.3) -- (1,-1);

\begin{scope}[yshift=-3cm]

\foreach \a in {0,1,3}
\draw[xshift=3*\a cm]
	(-0.4,0.4) -- (-1,-1) -- (0.4,-0.4);

\foreach \a in {0,1}
\draw[xshift=3*\a cm]
	(-0.4,0.4) -- (1,1) -- (0.4,-0.4)
	(1,1) -- (1.3,1.3)
	(-1,-1) -- (-1.3,-1.3);

\foreach \a in {0,3}
\draw[xshift=3*\a cm]
	(-0.4,0.4) -- (0.4,-0.4);

\draw[xshift=3 cm]
	(0.4,0.4) -- (-0.4,-0.4);

\draw[xshift=6 cm]
	(1.3,-1.3) -- (1,-1) -- (0.4,0.4) -- (-1,1) -- (-0.4,-0.4) -- (0.4,0.4)
	(-1,-1) -- (0.4,-0.4)
	(-1,1) -- (-1.3,1.3);

\draw[xshift=9 cm]
	(-1.3,1.3) -- (-1,1) -- (0.4,0.4) -- (1,-1) -- (1.3,-1.3);
				
\end{scope}
				
\end{tikzpicture}
\caption{Simple triangular subdivisions $S_{\triangle}P_6$ of types $h\bar{h}r$ and $h\bar{h}a$.}
\label{tiling_hha3B}
\end{figure}
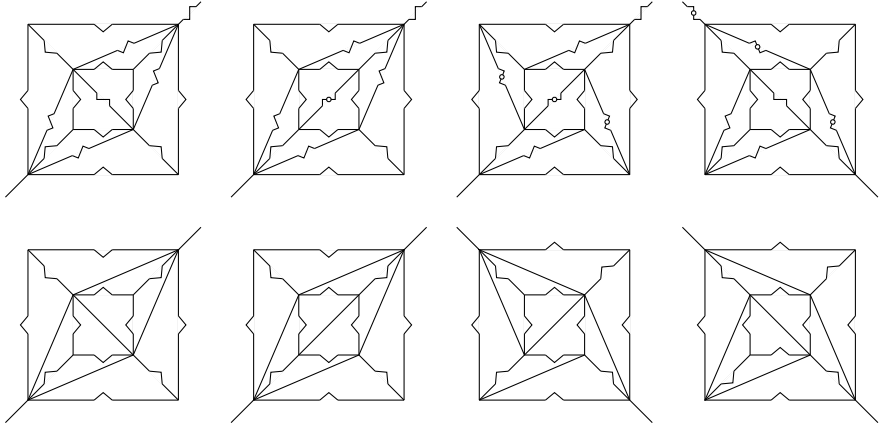

The cube from an $h\bar{h}a$-tiling can have both $h\bar{h}h\bar{h}$ and $hh\bar{h}\bar{h}$ as faces. This means the cube can be any of the four. For the face $h\bar{h}h\bar{h}$, we can take either of the two diagonals as $a$ to divide the quadrilateral face into two $h\bar{h}a$-triangles. For the face $hh\bar{h}\bar{h}$, we can only take one diagonal (that intersects the dotted line) as $a$ to divide the quadrilateral face into two $h\bar{h}a$-triangles.     

The second row of Figure \ref{tiling_hha3B} are four $h\bar{h}a$-tilings. The first and second are obtained from the first and second $h\bar{h}r$-tilings in the first row, by simply changing $r$ to $a$. The cube in the third $h\bar{h}a$-tiling is the second cube in Figure \ref{tiling_hha3A}. The cube has three $hh\bar{h}\bar{h}$ faces, and the choice of $a$ edge diagonals for the three faces is unique. The cube in the fourth $h\bar{h}a$-tiling is the fourth cube in Figure \ref{tiling_hha3A}. All faces of this cube are $hh\bar{h}\bar{h}$. The choice of $a$ edge diagonals is unique for all faces.

The simple triangular subdivision $S_{\triangle}P_6$ for $rr{\color{red} r'}$ and $rra$ completely parallels the subdivision for $h\bar{h}r$ and $h\bar{h}a$. The fourth of Figure \ref{pair} shows that the cube faces for $rr{\color{red} r'}$ are $rrrr$, and the cube faces for $rra$ are $rrrr$ and $rrr^{-1}r^{-1}$. On the left of Figure \ref{tiling_hha3C}, we draw the two quadrilaterals by using solid and dotted lines to represent $r$ and $r^{-1}$. This is a much easier way to visualise the cube, and there are exactly four cubes with $rrrr$ or $rrr^{-1}r^{-1}$ as faces, on the right of Figure \ref{tiling_hha3C}.

\begin{figure}[htp]
\centering
\begin{tikzpicture}[>=latex]

\begin{scope}[shift={(-4cm,0.2cm)}]


\begin{scope}
[decoration={markings,mark=at position 0.5 with {
\draw[white, thick] (0.2,0)--(-0.2,0);
\draw (0.2,0)--(0.1,0.1)--(-0.1,-0.1)--(-0.2,0);
}}]

\foreach \a in {0,...,3}
\draw[postaction={decorate}, rotate=90*\a]
	(0.6,-0.6) -- (0.6,0.6);

\foreach \a in {0,1}
\draw[postaction={decorate}, xshift=1.8cm, rotate=90*\a]
	(0.6,-0.6) -- (0.6,0.6);

\end{scope}

\foreach \a in {0,1}
\draw[densely dotted, xshift=1.8cm, rotate=90*\a]
	(-0.6,0.6) -- (-0.6,0.2) -- (-0.5,0.1) -- (-0.7,-0.1) -- (-0.6,-0.2) -- (-0.6,-0.6);

\filldraw[fill=white, xshift=1.8cm] 
	(-0.6,0) circle (0.05)
	(0,-0.6) circle (0.05);

\node at (0,-1) {$rrrr$};
\node at (1.8,-0.95) {$rrr^{-1}r^{-1}$};
	
\end{scope}


\foreach \a in {0,1,2,3}
\draw[rotate=90*\a]
	(-0.4,0.4) -- (0.4,0.4) -- (0.8,0.8) -- (-0.8,0.8);

\draw[xshift=2.5cm]
	(0.4,0.4) -- (0.8,0.8) 
	(0.8,0.8) rectangle (-0.8,-0.8)
	(0.8,-0.8) -- (0.4,-0.4) -- (0.4,0.4) -- (-0.4,0.4) -- (-0.8,0.8);

\draw[xshift=5cm]
	(0.8,-0.8) -- (-0.8,-0.8) -- (-0.8,0.8)
	(-0.8,-0.8) -- (-0.4,-0.4)
	(-0.4,0.4) -- (0.4,0.4) -- (0.4,-0.4) 
	(0.4,0.4) -- (0.8,0.8);

\draw[xshift=7.5cm]
	(-0.8,-0.8) -- (0.8,-0.8) -- (0.4,-0.4) -- (0.4,0.4) -- (-0.4,0.4) -- (-0.8,0.8) -- (-0.8,-0.8);
	
\begin{scope}[densely dotted]

\draw[xshift=2.5cm]
	(-0.4,0.4) -- (-0.4,-0.4) -- (0.4,-0.4)
	(-0.4,-0.4) -- (-0.8,-0.8);

\draw[xshift=5cm]
	(-0.4,0.4) -- (-0.4,-0.4) -- (0.4,-0.4) -- (0.8,-0.8) -- (0.8,0.8) -- (-0.8,0.8) -- cycle;

\draw[xshift=7.5cm]
	(-0.4,0.4) -- (-0.4,-0.4) -- (0.4,-0.4)
	(-0.4,-0.4) -- (-0.8,-0.8)
	(-0.8,0.8) -- (0.8,0.8) -- (0.8,-0.8)
	(0.4,0.4) -- (0.8,0.8);
			
\end{scope}
	
\end{tikzpicture}
\caption{Cubes with $rrrr$ and $rrr^{-1}r^{-1}$ faces.}
\label{tiling_hha3C}
\end{figure}
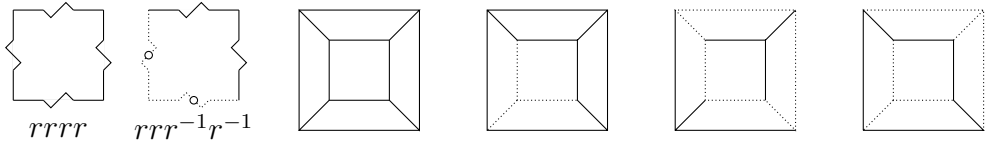

The $rr{\color{red} r'}$-tilings are similar to $h\bar{h}r$-tilings, and are the simple triangular subdivisions of the first of the four quadrilaterals in Figure \ref{tiling_hha3C}. There are seven non-equivalent subdivision tilings, four of which are given by the first row of Figure \ref{tiling_hha3D}.

\begin{figure}[htp]
\centering
\begin{tikzpicture}[>=latex,scale=1]



\begin{scope}
[decoration={markings,mark=at position 0.5 with {
\draw[white, thick] (0.12,0)--(-0.12,0);
\draw (0.12,0)--(0.06,0.06)--(-0.06,-0.06)--(-0.12,0);
}}]


\foreach \a in {0,1,2,3}
\foreach \b in {0,1,2,3}
{
\begin{scope}[xshift=3*\b cm, rotate=90*\a]

\draw[postaction={decorate}]
	(-0.4,0.4) -- (0.4,0.4);
\draw[postaction={decorate}]
	(0.4,0.4) -- (1,1);
\draw[postaction={decorate}]
	(-1,1) -- (1,1);

\end{scope}
}

\begin{scope}[red]

\foreach \a in {0,1}
\foreach \b in {1,-1}
{
\begin{scope}[xshift=3*\a cm, scale=\b]

\draw[postaction={decorate}]
	(-1,-1) -- (-0.4,0.4);
\draw[postaction={decorate}]
	(-1,-1) -- (0.4,-0.4);
	
\end{scope}
}

\foreach \a in {0,3}
\draw[postaction={decorate}, xshift=3*\a cm]
	(0.4,-0.4) -- (-0.4,0.4);

\foreach \a in {1,2}
\draw[postaction={decorate}, xshift=3*\a cm]
	(0.4,0.4) -- (-0.4,-0.4);

\foreach \a in {0,1,2}
\draw[postaction={decorate}, xshift=3*\a cm]
	(1,1) -- (1.3,1.3);
		
\begin{scope}[xshift=6 cm]

\foreach \a in {0,...,3}
\draw[postaction={decorate}, rotate=90*\a]
	(1,1) -- (-0.4,0.4);
	
\end{scope}

\begin{scope}[xshift=9 cm]
	
\draw[postaction={decorate}]
	(-1,-1) -- (-0.4,0.4);
\draw[postaction={decorate}]
	(-1,-1) -- (0.4,-0.4);
\draw[postaction={decorate}]
	(-1,1) -- (0.4,0.4);
\draw[postaction={decorate}]
	(1,-1) -- (0.4,0.4);
\draw[postaction={decorate}]
	(-1,1) -- (-1.3,1.3);
	
\end{scope}

\end{scope}


\begin{scope}[yshift=-3 cm]


\foreach \a in {0,1,2,3}
\foreach \b in {0,1}
{
\begin{scope}[xshift=3*\b cm, rotate=90*\a]

\draw[postaction={decorate}]
	(-0.4,0.4) -- (0.4,0.4);
\draw[postaction={decorate}]
	(0.4,0.4) -- (1,1);
\draw[postaction={decorate}]
	(-1,1) -- (1,1);

\end{scope}
}
	

\begin{scope}[xshift=6 cm]

\foreach \a in {0,1}
\draw[postaction={decorate}, rotate=90*\a]
	(0.4,0.4) -- (0.4,-0.4);

\foreach \a in {0,1,2}
\draw[postaction={decorate}, rotate=90*\a]
	(0.4,-0.4) -- (1,-1);
	
\foreach \a in {0,...,3}
\draw[postaction={decorate}, rotate=90*\a]
	(-1,1) -- (1,1);

\end{scope}


\begin{scope}[xshift=9 cm]

\foreach \a in {0,1}
{
\draw[postaction={decorate}, rotate=90*\a]
	(0.4,0.4) -- (0.4,-0.4);

\draw[postaction={decorate}, rotate=180*\a]
	(0.4,-0.4) -- (1,-1);

\draw[postaction={decorate}, rotate=90*\a]
	(-1,1) -- (-1,-1);
}

\end{scope}

\end{scope}

\end{scope}


\begin{scope}
[decoration={markings,mark=at position 0.5 with {
\draw[white, thick] (0.12,0)--(-0.12,0);
\draw (0.12,0)--(0.06,-0.06)--(-0.06,0.06)--(-0.12,0);
\filldraw[fill=white] (0,0) circle (0.03);
}}]


\begin{scope}[yshift=-3 cm]

\begin{scope}[xshift=6 cm]

\foreach \a in {0,1}
\draw[postaction={decorate}, rotate=90*\a]
	(-0.4,0.4) -- (-0.4,-0.4);

\draw[postaction={decorate}]
	(-0.4,-0.4) -- (-1,-1);
	
\end{scope}

\begin{scope}[xshift=9 cm]

\foreach \a in {0,1}
\draw[postaction={decorate}, rotate=90*\a]
	(-0.4,0.4) -- (-0.4,-0.4);

\draw[postaction={decorate}]
	(-0.4,-0.4) -- (-1,-1);

\foreach \a in {0,1}
\draw[postaction={decorate}, rotate=90*\a]
	(1,1) -- (1,-1);

\draw[postaction={decorate}]
	(0.4,0.4) -- (1,1);
		
\end{scope}

\end{scope}

\end{scope}


\foreach \a in {0,1,2}
\draw[xshift=3*\a cm, red]
	(-1,-1) -- (-1.3,-1.3);

\draw[xshift=9 cm, red]
	(1.3,-1.3) -- (1,-1);

\begin{scope}[yshift=-3cm]

\foreach \a in {0,1,3}
\draw[xshift=3*\a cm]
	(-0.4,0.4) -- (-1,-1) -- (0.4,-0.4);

\foreach \a in {0,1}
\draw[xshift=3*\a cm]
	(-0.4,0.4) -- (1,1) -- (0.4,-0.4)
	(1,1) -- (1.3,1.3)
	(-1,-1) -- (-1.3,-1.3);

\foreach \a in {0,3}
\draw[xshift=3*\a cm]
	(-0.4,0.4) -- (0.4,-0.4);

\draw[xshift=3 cm]
	(0.4,0.4) -- (-0.4,-0.4);

\draw[xshift=6 cm]
	(1.3,-1.3) -- (1,-1) -- (0.4,0.4)
	(-0.4,0.4) -- (1,1)
	(-0.4,0.4) -- (-1,-1) -- (0.4,-0.4) -- (-0.4,0.4)
	(-1,1) -- (-1.3,1.3);

\draw[xshift=9 cm]
	(-1.3,1.3) -- (-1,1) -- (0.4,0.4) -- (1,-1) -- (1.3,-1.3);
				
\end{scope}
				
\end{tikzpicture}
\caption{Simple triangular subdivisions $S_{\triangle}P_6$ of types $rr{\color{red} r'}$ and $rra$.}
\label{tiling_hha3D}
\end{figure}

The $rra$-tilings are similar to $h\bar{h}a$-tilings, and are the simple triangular subdivisions of any of the four quadrilaterals in Figure \ref{tiling_hha3C}. The face $rrrr$ can be divided by either of the two diagonals, and the face $rrr^{-1}r^{-1}$ can be divided only by one of the two diagonals. The second row of Figure \ref{tiling_hha3D} are four $rra$-tilings. In the first and second tilings, all cube faces are $rrrr$, and there are exactly seven such simple triangular subdivisions. In the fourth tiling, all faces are $rrr^{-1}r^{-1}$, and there is only one simple triangular subdivision. 

\medskip

\noindent {\bf Case (1).} Earth map tiling $E_{\triangle}^I1$ and modifications.

\medskip

We know all the vertices $\text{AVC}
=\{2^23^2,1^{\frac{f}{4}}23,1^{\frac{f}{2}}\}$. We discuss $h\bar{h}a$-tilings. The proof for the other types are similar. 

Consecutive $1$s at a vertex are arranged as $\langle 1\langle 1\langle\cdots\langle 1\langle$. They determine the upper row of tiles on the left of Figure \ref{tiling_hha7A} (the picture shows four tiles). Moreover, each tile (say \circled{1}) in the upper row has a companion tile (say \circled{2}) sharing the $a$-edge. There are two possible ways of arranging \circled{2}. The $h$-edges of the companion tiles imply that all the companion tiles are arranged in the same way. Therefore the companion tiles form the lower row in the left or middle of Figure \ref{tiling_hha7A}. 

If $1^{\frac{f}{2}}$ is a vertex, then we get two possible versions of the earth map tiling $E_{\triangle}^I1$ (see Figures 5 of \cite{cly}). We note that \circled{1} and \circled{2} form one timezone, and the earth map tiling consists of $\frac{f}{2}$ timezones, and has $1^{\frac{f}{2}}$ as the pole vertices.

\begin{figure}[htp]
\centering
\begin{tikzpicture}[>=latex]

	
\foreach \a in {1,11}
\fill[gray!30, xshift=0.8*\a cm]
	(0,1) -- (0,0.7) -- (-0.1,0.5) -- (0,0.3) -- (0,-0.3) -- (-0.1,-0.5) -- (0,-0.7) -- (0,-1) -- (0.8,-1) -- (0.8,-0.7) -- (0.7,-0.5) -- (0.8,-0.3) -- (0.8,0.3) -- (0.7,0.5) -- (0.8,0.7) -- (0.8,1);

\fill[gray!30, xshift=4.8cm]
	(0,1) -- (0,0.7) -- (-0.1,0.5) -- (0,0.3) -- (0,-0.3) -- (0.1,-0.5) -- (0,-0.7) -- (0,-1) -- (0.8,-1) -- (0.8,-0.7) -- (0.9,-0.5) -- (0.8,-0.3) -- (0.8,0.3) -- (0.7,0.5) -- (0.8,0.7) -- (0.8,1);


\foreach \a in {0,...,14}
\draw[xshift=0.8*\a cm]
	(0,1) -- (0,0.7) -- (-0.1,0.5) -- (0,0.3) -- (0,0);

\foreach \a in {0,1,2,3,4,10,11,12,13,14}
\draw[xshift=0.8*\a cm]	
	(0,0) -- (0,-0.3) -- (-0.1,-0.5) -- (0,-0.7) -- (0,-1);

\foreach \a in {5,...,9}
\draw[xshift=0.8*\a cm]	
	(0,0) -- (0,-0.3) -- (0.1,-0.5) -- (0,-0.7) -- (0,-1);
	

\foreach \a in {10,...,13}
\draw[xshift=0.8*\a cm]
	(0,0) -- (0.2,0) -- (0.3,-0.1) -- (0.5,0.1) -- (0.6,0) -- (0.8,0);		


\draw
	(0,0) -- ++(3.2,0)
	(4,0) -- ++(3.2,0);

\foreach \x in {0,...,3}
{
\begin{scope}[xshift=0.8*\x cm, font=\tiny]

\node at (0.4,1) {1};
\node at (0.15,0.15) {2};
\node at (0.65,0.15) {3};

\node at (0.4,-1) {1};
\node at (0.15,-0.15) {2};
\node at (0.65,-0.15) {3};

\end{scope}
}

\node at (0,1.2) {\tiny $l_2$};
\node at (3.2,1.2) {\tiny $l_1$};

\node[inner sep=0.5, draw, shape=circle] at (1.2,0.5) {\tiny 1};
\node[inner sep=0.5, draw, shape=circle] at (1.2,-0.5) {\tiny 2};

\end{tikzpicture}
\caption{Earth map tiling $E_{\triangle}^I1$ for $h\bar{h}a$ and $h\bar{h}r$.}
\label{tiling_hha7A}
\end{figure}

The same argument can be applied to $h\bar{h}r$-tilings. The $r$-edge in $h\bar{h}r$ implies the companions (see the first of Figure \ref{pair}) can only be arranged as the right of Figure \ref{tiling_hha7A}. Therefore there is only one version of the earth map tiling $E_{\triangle}^I1$ for $h\bar{h}r$.  

The earth map tilings for $rra$ and $rr{\color{red} r'}$ are parallel to $h\bar{h}a$ and $h\bar{h}r$. If $1^{\frac{f}{2}}$ is a vertex, then we get two versions of the earth map tiling $E_{\triangle}^I1$ for $rra$, and only one version for $rr{\color{red} r'}$. See Figure \ref{tiling_hha7A}.

\begin{figure}[htp]
\centering
\begin{tikzpicture}[>=latex]

	
\foreach \a in {1,11}
\fill[gray!30, xshift=0.8*\a cm]
	(0,1) -- (0,0.7) -- (-0.1,0.6) -- (0.1,0.4) -- (0,0.3) -- (0,-0.3) -- (-0.1,-0.4) -- (0.1,-0.6) -- (0,-0.7) -- (0,-1) -- (0.8,-1) -- (0.8,-0.7) -- (0.9,-0.6) -- (0.7,-0.4) -- (0.8,-0.3) -- (0.8,0.3) -- (0.9,0.4) -- (0.7,0.6) -- (0.8,0.7) -- (0.8,1);

\fill[gray!30, xshift=4.8cm]
	(0,1) -- (0,0.7) -- (-0.1,0.6) -- (0.1,0.4) -- (0,0.3) -- (0,-0.3) -- (0.1,-0.4) -- (-0.1,-0.6) -- (0,-0.7) -- (0,-1) -- (0.8,-1) -- (0.8,-0.7) -- (0.7,-0.6) -- (0.9,-0.4) -- (0.8,-0.3) -- (0.8,0.3) -- (0.9,0.4) -- (0.7,0.6) -- (0.8,0.7) -- (0.8,1);


\foreach \a in {0,...,14}
\draw[xshift=0.8*\a cm]
	(0,1) -- (0,0.7) -- (-0.1,0.6) -- (0.1,0.4) -- (0,0.3) -- (0,0);

\foreach \a in {0,1,2,3,4,10,11,12,13,14}
\draw[xshift=0.8*\a cm]	
	(0,-1) -- (0,-0.7) -- (0.1,-0.6) -- (-0.1,-0.4) -- (0,-0.3) -- (0,0);

\foreach \a in {5,6,7,8,9}
{
\begin{scope}[xshift=0.8*\a cm]	

\draw
	(0,-1) -- (0,-0.7) -- (-0.1,-0.6) -- (0.1,-0.4) -- (0,-0.3) -- (0,0);
\filldraw[fill=white]
	(0,-0.5) circle (0.05);
	
\end{scope}
}

\foreach \a in {10,...,13}
\draw[xshift=0.8*\a cm, red]
	(0,0) -- (0.2,0) -- (0.3,-0.1) -- (0.5,0.1) -- (0.6,0) -- (0.8,0);		


\draw
	(0,0) -- ++(3.2,0)
	(4,0) -- ++(3.2,0);

\end{tikzpicture}
\caption{Earth map tiling $E_{\triangle}^I1$ for $rra$ and $rr{\color{red} r'}$.}
\label{tiling_hha7B}
\end{figure}

Suppose $1^{\frac{f}{2}}$ is not a vertex. Then $\text{AVC}=\{2^23^2,1^q23\}$, $q=\frac{f}{4}$, are all the vertices. Since the total numbers of the corners 1 and 2 in the tiling are the same, both must appear as vertices. We continue the argument for $h\bar{h}a$.

The vertex $1^q23$ is one fan $|3\langle 1\langle \cdots\langle 1\langle 2|$. The $1^q$ part of the fan induces a hemisphere tiling ${\mc H}$ consisting of $q$ timezones, like the left or middle of Figure \ref{tiling_hha7A}, with $1^q$ at both ends. We need to fill the rest of the tiling. 

On the left of Figure \ref{tiling_hha14A}, we draw the hemisphere ${\mc H}$ as the left of $l_1$ and the right of $l_2$. We need to fill the region on the right of $l_1$ and the left of $l_2$. The $23$ part of $1^q23$ already determines the tiles \circled{1} and \circled{2} in this region. Then the vertex $1_2\cdots=123\cdots=1^q23$. The $1^q$ part of this vertex induces another hemisphere tiling ${\mc H}$ that exactly fills the region (the picture shows the case $q=4$). Therefore the tiling is the union of two copies of ${\mc H}$.

\begin{figure}[htp]
\centering
\begin{tikzpicture}[>=latex]


\begin{scope}[xshift=-6cm]

\foreach \c in {1,-1}
{
\begin{scope}[scale=\c]

\node at (0.85,0.35) {\tiny 1};
\node at (0.15,1.1) {\tiny 2};
\node at (0.5,1.9) {\tiny 3};

\node at (0.7,0.15) {\tiny 1};
\node at (0.15,0.15) {\tiny 2};
\node at (0.15,0.7) {\tiny 3};

\node at (0.85,-0.35) {\tiny 1};
\node at (0.15,-1.1) {\tiny 3};
\node at (0.5,-1.9) {\tiny 2};

\node at (0.7,-0.15) {\tiny 1};
\node at (0.15,-0.7) {\tiny 2};
\node at (0.15,-0.15) {\tiny 3};

\node at (1.3,1.9) {\tiny 1};
\node at (1.3,-1.9) {\tiny 1};
\node at (1.15,0.15) {\tiny 2};
\node at (1.15,-0.15) {\tiny 3};

\fill 
	(1,0) circle (0.05);

\end{scope}	
}

\node at (-1,2.2) {\tiny $l_1$};
\node at (1,2.2) {\tiny $l_2$};

\node[inner sep=0.5, draw, shape=circle] at (-0.5,1.4) {\tiny 1};
\node[inner sep=0.5, draw, shape=circle] at (0.5,1.4) {\tiny 2};

\draw
	(1,0) -- (1.3,0)
	(-1,0) -- (-1.3,0);

\draw
	(0,-2) -- (0,2);	

\foreach \b in {-1,1}
\draw[->, line width=3, gray!50, xshift=\b cm, scale=\b]
	(50:0.7) arc (50:310:0.7);


\foreach \c in {1,-1}
{
\begin{scope}
[decoration={markings,mark=at position 0.5 with {
\draw[white, thick] (0.2,0)--(-0.2,0);
\draw (0.2,0)--(0,0.1)--(-0.2,0);}}, scale=\c]

\foreach \a in {1,-1}
\draw[postaction={decorate}]
	(1,0) -- ++(0,2*\a);

\foreach \a in {1,0,-1}
\draw[postaction={decorate}]
	(1,0) -- (0,\a);
			
\end{scope}	
}

\end{scope}

\begin{scope}[yshift=1.5 cm]

\foreach \b in {1,-1}
\fill[yellow, scale=\b]
	(0.1,-0.4) rectangle (1.9,0.4);

\foreach \a in {-4,...,4}
\foreach \b in {1,-1}
\draw[xshift=0.5*\a cm, scale=\b]
	(0,0) -- (0,0.15) -- (-0.05,0.25) -- (0,0.35) -- (0,0.5);
	
\draw
	(-2,0) -- (2,0);	

\foreach \a in {1,-1}
{
\begin{scope}[xscale=\a]
	
\draw[<->]
	(0,0.7) -- (2,0.7);
	
\node at (1,0.9) {\tiny $1^q$};

\node at (1,-0.7) {${\mc H}$};

\end{scope}
}

\node at (2.2,0) {\tiny $23$};
\node at (-2.2,0) {\tiny $23$};

\end{scope}

\begin{scope}[yshift=-1cm]

\begin{scope}
[decoration={markings,mark=at position 0.5 with {
\draw[white, thick] (0.2,0)--(-0.2,0);
\draw (0.2,0)--(0,0.1)--(-0.2,0);}}]

\foreach \x in {-1,1}
\foreach \b in {-1,1}
{
\begin{scope}[xshift=1.7*\x cm, scale=\b]

\draw[postaction={decorate}]
	(1.2,0) -- (0,1.2);
\draw[postaction={decorate}]
	(-1.2,0) -- (0,1.2);

\end{scope}
}
				
\end{scope}

\foreach \b in {-1,1}
\foreach \x in {-1,1}
{
\begin{scope}[xshift=1.7*\x cm, scale=\b]

\node at (0,1.35) {\tiny $1^q$};
\node at (1.4,0) {\tiny $23$};

\end{scope}
}

\node at (2.7,-1.2) {$E_{\triangle}^I1$};

\begin{scope}[xshift=-1.7cm]

\node at (0.9,0) {\tiny $1^q$};
\node at (0,0.9) {\tiny $23$};
\node at (-0.9,0) {\tiny $1^q$};
\node at (0,-0.9) {\tiny $23$};

\draw[gray!50]
	(45:1.3) -- (45:-1.3);

\node at (45:1.6) {$F$};

\node at (0,0) {${\mc H}$};
\node at (-1,1) {${\mc H}$};

\node at (1,-1.2) {$FE_{\triangle}^I1$};

\end{scope}

\begin{scope}[xshift=1.7cm]

\node at (0,0.9) {\tiny $1^q$};
\node at (0.9,0) {\tiny $23$};
\node at (0,-0.9) {\tiny $1^q$};
\node at (-0.9,0) {\tiny $23$};

\end{scope}

\end{scope}

\end{tikzpicture}
\caption{Flip modification $FE_{\triangle}^I1$ for $h\bar{h}a$.}
\label{tiling_hha14A}
\end{figure}

We remark that, on the left of Figure \ref{tiling_hha14A}, we actually choose the hemisphere tiling ${\mc H}$ on the left of $l_1$ and the right of $l_2$ to be the middle of Figure \ref{tiling_hha7A}. In fact, the $h$-edge $\overline{12}_1$ of \circled{2} determines (as indicated by the thick gray arrows) all the $h$-edges of the tiles corresponding to the $1^q$ part of $1_2\cdots=1^q23$. This implies that the hemisphere tiling ${\mc H}$ can only be the middle of Figure \ref{tiling_hha7A}, and cannot be the left one.

On the right of Figure \ref{tiling_hha14A}, we interpret the tiling just obtained. On the upper right of Figure \ref{tiling_hha14A}, we show that the earth map tiling $E_{\triangle}^I1$ is also a union of two ${\mc H}$. On the lower right of Figure \ref{tiling_hha14A}, we compare the common boundary of the two ${\mc H}$ and the corner combinations along the common boundary, the first for the tiling on the left of Figure \ref{tiling_hha14A}, and the second for the earth map tiling $E_{\triangle}^I1$. We find the two are related by the flip of interior ${\mc H}$ with respect to the gray line. Therefore the tiling on the left of Figure \ref{tiling_hha14A} is the flip modification $FE_{\triangle}^I1$ (see Figures 20 of \cite{cly}).

The argument for $\text{AVC}
=\{2^23^2,1^q23\}$ and $h\bar{h}a$ also applies to $h\bar{h}r$, $rra$ and $rr{\color{red} r'}$. We get tilings in Figure \ref{tiling_hha14B} similar to the left of Figure \ref{tiling_hha14A}. Then we may interpret the structure of the tilings similar to the right of Figure \ref{tiling_hha14A}. We find that the first tiling in Figure \ref{tiling_hha14B}, for $h\bar{h}r$, is still the flip modification $FE_{\triangle}^I1$ described in Figure \ref{tiling_hha14A}. However, the second the third tilings in Figure \ref{tiling_hha14B}, for $rra$ and $rr{\color{red} r'}$, is the rotation modification $RE_{\triangle}^I1$ instead of the flip modification. The rotation modification is described on the right of Figure \ref{tiling_hha14B}, where the structure of the earth map tiling $E_{\triangle}^I1$ is given. We need to exchange $1^q$ and $23$ in the interior ${\mc H}$. The direction of the $r$-edge implies that a flip cannot be applied because it would change $r$ on the boundary to $r^{-1}$. Therefore the exchange of $1^q$ and $23$ can only be described as the rotation modification $RE_{\triangle}^I1$. 

\begin{figure}[htp]
\centering
\begin{tikzpicture}[>=latex]



\foreach \c in {1,-1}
{
\begin{scope}
[decoration={markings,mark=at position 0.5 with {
\draw[white, thick] (0.2,0)--(-0.2,0);
\draw (0.2,0)--(0,0.1)--(-0.2,0);}},  scale=\c]

\foreach \a in {1,-1}
\draw[postaction={decorate}]
	(1,0) -- ++(0,2*\a);

\foreach \a in {1,0,-1}
\draw[postaction={decorate}]
	(1,0) -- (0,\a);
				
\end{scope}	
}


\foreach \y in {-2,-1,0,1}
{
\draw[yshift=\y cm]
	(0,0) -- (0,0.3) -- (-0.1,0.4) -- (0.1,0.6) -- (0,0.7) -- (0,1);

\filldraw[fill=white] (0,0.5+\y) circle (0.05);
}


\foreach \a in {0,1,2}
\draw[xshift=3.2*\a cm]
	(1,0) -- (1.3,0)
	(-1,0) -- (-1.3,0);	

\draw
	(3.2,-2) -- ++(0,4);



\foreach \c in {1,-1}
\foreach \a in {1,2}
{
\begin{scope}
[decoration={markings,mark=at position 0.5 with {
\draw[white,thick](0.2,0)--(-0.2,0);
\draw (0.2,0)--(0.1,0.1)--(-0.1,-0.1)--(-0.2,0);}},
xshift=3.2*\a cm, scale=\c]

\foreach \a in {1,-1}
\draw[postaction={decorate}]
	(1,0) -- ++(0,2*\a);

\foreach \a in {1,0,-1}
\draw[postaction={decorate}]
	(1,0) -- (0,\a);

\end{scope}	
}

\foreach \y in {-2,-1,0,1}
\draw[xshift=6.4 cm, yshift=\y cm, red]
	(0,0) -- (0,0.3) -- (0.1,0.4) -- (-0.1,0.6) -- (0,0.7) -- (0,1);

	
\begin{scope}[xshift=9.8cm]

\draw[gray!50, ->]
	(0:0.5) arc (0:90:0.5);

\node at (-0.15,0.5) {$R$};
\node at (0,0) {${\mc H}$};
\node at (-1,1) {${\mc H}$};

\node at (0,0.9) {\tiny $1^q$};
\node at (0.9,0) {\tiny $23$};
\node at (0,-0.9) {\tiny $1^q$};
\node at (-0.9,0) {\tiny $23$};

\node at (0,1.35) {\tiny $1^q$};
\node at (1.4,0) {\tiny $23$};
\node at (0,-1.35) {\tiny $1^q$};
\node at (-1.4,0) {\tiny $23$};

\begin{scope}[decoration={markings,mark=at position 0.5 with {
\draw[white,thick](0.2,0)--(-0.2,0);
\draw (0.2,0)--(0.1,0.1)--(-0.1,-0.1)--(-0.2,0);}}]

\foreach \b in {0,1,2,3}
\draw[postaction={decorate}, rotate=90*\b]
	(1.2,0) -- (0,1.2);
		
\end{scope}

\node at (1,-1.2) {$RE_{\triangle}^I1$};

\end{scope}
	
\end{tikzpicture}
\caption{Modification of $E_{\triangle}^I1$ for $h\bar{h}r$, $rra$, $rr{\color{red} r'}$.}
\label{tiling_hha14B}
\end{figure}

\medskip

\noindent {\bf Case (2).} Earth map tiling $E_{\triangle}^J1$ and modifications.

\medskip

We know all the vertices $\text{AVC}
=\{1^223,12^{\frac{f+4}{8}}3^{\frac{f+4}{8}},2^{\frac{f}{4}}3^{\frac{f}{4}}\}$. We discuss $h\bar{h}a$-tilings. The proof for the other types are similar. 

A fan $|2\rangle 3|$ determines tiles \circled{1} and \circled{2} on the left of Figure \ref{tiling_hha5A}. Then the vertex $1_11_2\cdots=1^223$ is one fan $|2\rangle 1\rangle 1\rangle 3|$. Therefore one fan $|2\rangle 3|$ determines a timezone consisting of \circled{1}, \circled{2}, \circled{3}, \circled{4}. 

The vertex $2^{\frac{f}{4}}3^{\frac{f}{4}}$ consists of $\frac{f}{4}$ fans $|2\rangle 3|$. These fans induce $\frac{f}{4}$ timezones. The timezones fit together only in the way in Figure \ref{tiling_hha5A}. Then we get the earth map tiling $E_{\triangle}^J1$ consisting of $\frac{f}{4}$ timezones (see Figure 5 of \cite{cly}). 

\begin{figure}[htp]
\centering
\begin{tikzpicture}[>=latex]

\fill[gray!30, xshift=1.2cm]
	(0,-1) -- (0,-0.7) -- (0.1,-0.5) -- (0,-0.3) -- (0,1) -- (1.2,1) -- (1.2,-0.3) -- (1.3,-0.5) -- (1.2,-0.7) -- (1.2,-1);

\fill[gray!30, xshift=7.2cm]
	(0,-1) -- (0,-0.7) -- (0.1,-0.5) -- (0,-0.3) -- (0,0.3) -- (0.1,0.4) -- (-0.1,0.6) -- (0,0.7) -- (0,1) -- (1.2,1) -- (1.2,0.7) -- (1.1,0.6) -- (1.3,0.4) -- (1.2,0.3) -- (1.2,-0.3) -- (1.3,-0.5) -- (1.2,-0.7) -- (1.2,-1);
	

\foreach \x in {1,3,5,7}
{
\begin{scope}[xshift=0.6*\x cm]

\node at (0.3,1) {\tiny 3};
\node at (0.1,0.15) {\tiny 1};
\node at (0.5,0.15) {\tiny 2};

\node at (-0.3,1) {\tiny 2};
\node at (-0.1,0.15) {\tiny 1};
\node at (-0.5,0.15) {\tiny 3};

\node at (0.3,-1) {\tiny 2};
\node at (0.1,-0.15) {\tiny 3};
\node at (0.5,-0.15) {\tiny 1};

\node at (-0.3,-1) {\tiny 3};
\node at (-0.1,-0.15) {\tiny 2};
\node at (-0.5,-0.15) {\tiny 1};

\end{scope}
}

\node at (4.8,1.2) {\tiny $l_1$};
\node at (0,1.2) {\tiny $l_2$};

\node[inner sep=0.5, draw, shape=circle] at (1.5,0.5) {\tiny 1};
\node[inner sep=0.5, draw, shape=circle] at (2.1,0.5) {\tiny 2};
\node[inner sep=0.5, draw, shape=circle] at (1.5,-0.5) {\tiny 3};
\node[inner sep=0.5, draw, shape=circle] at (2.1,-0.5) {\tiny 4};

\foreach \b in {0,1}
{
\begin{scope}[xshift=6*\b cm]

\foreach \a in {1,3,5,7}
\draw[xshift=0.6*\a cm]
	(0,1) -- (0,0.7) -- (0.1,0.5) -- (0,0.3) -- (0,0);

\foreach \a in {0,2,4,6,8}
\draw[xshift=0.6*\a cm]
	(0,-1) -- (0,-0.7) -- (0.1,-0.5) -- (0,-0.3) -- (0,0);

\foreach \a in {0,2,4,6}
\draw[xshift=0.6*\a cm]
	(0,0) -- (0.15,0) -- (0.3,0.1) -- (0.45,0) -- (0.75,0) -- (0.9,-0.1) -- (1.05,0) -- (1.2,0);

\end{scope}	
}
	
\foreach \a in {0,2,4,6,8}
\draw
	(0.6*\a,0) -- ++(0,1);

\foreach \a in {1,3,5,7}
\draw
	(0.6*\a,0) -- ++(0,-1);

\begin{scope}[xshift=6 cm]

\foreach \a in {0,2,4,6,8}
{
\draw[xshift=0.6*\a cm]
	(0,0) -- (0,0.3) -- (0.1,0.4) -- (-0.1,0.6) -- (0,0.7) -- (0,1);
}

\foreach \a in {1,3,5,7}
{
\draw[xshift=0.6*\a cm]
	(0,0) -- (0,-0.3) -- (0.1,-0.4) -- (-0.1,-0.6) -- (0,-0.7) -- (0,-1);
\filldraw[fill=white] (0.6*\a,-0.5) circle (0.05);
}

\end{scope}
			
\end{tikzpicture}
\caption{Earth map tiling $E_{\triangle}^J1$ for $h\bar{h}a$ and $h\bar{h}r$.}
\label{tiling_hha5A}
\end{figure}

The same argument can be applied to $h\bar{h}r$, $rra$, $rr{\color{red} r'}$. We get the earth map tilings $E_{\triangle}^J1$ for $h\bar{h}r$ on the right of Figure \ref{tiling_hha5A}. We also get $E_{\triangle}^J1$ for $rra$ and $rr{\color{red} r'}$ in Figure \ref{tiling_hha5B}.

\begin{figure}[htp]
\centering
\begin{tikzpicture}[>=latex]

\fill[gray!30, xshift=1.2cm]
	(0,-1) -- (0,-0.7) -- (0.1,-0.6) -- (-0.1,-0.4) -- (0,-0.3) -- (0,1) -- (1.2,1) -- (1.2,-0.3) -- (1.1,-0.4) -- (1.3,-0.6) -- (1.2,-0.7) -- (1.2,-1);

\foreach \a in {0,1}
\foreach \b in {0,-1}
\fill[gray!30, xshift=7.2cm+0.6*\a cm, yshift=\b cm]
	(0,1) -- (0,0.7) -- (-0.1,0.6) -- (0.1,0.4) -- (0,0.3) -- (0,0) -- (0.6,0) -- (0.6,0.3) -- (0.7,0.4) -- (0.5,0.6) -- (0.6,0.7) -- (0.6,1);
	

\foreach \b in {0,1}
{
\begin{scope}[xshift=6*\b cm]

\foreach \a in {1,3,5,7}
\draw[xshift=0.6*\a cm]
	(0,1) -- (0,0.7) -- (-0.1,0.6) -- (0.1,0.4) -- (0,0.3) -- (0,0);

\foreach \a in {0,2,4,6,8}
\draw[xshift=0.6*\a cm]
	(0,-1) -- (0,-0.7) -- (0.1,-0.6) -- (-0.1,-0.4) -- (0,-0.3) -- (0,0);

\foreach \a in {0,...,7}
\draw[xshift=0.6*\a cm]
	(0,0) -- (0.1,0) -- (0.2,-0.1) -- (0.4,0.1) -- (0.5,0) -- (0.6,0);

\end{scope}	
}

\foreach \a in {0,2,4,6,8}
\draw
	(0.6*\a,0) -- ++(0,1);

\foreach \a in {1,3,5,7}
\draw
	(0.6*\a,0) -- ++(0,-1);

\begin{scope}[xshift=6 cm]

\foreach \a in {0,2,4,6,8}
{
\draw[xshift=0.6*\a cm, red]
	(0,0) -- (0,0.3) -- (0.1,0.4) -- (-0.1,0.6) -- (0,0.7) -- (0,1);
}

\foreach \a in {1,3,5,7}
\draw[xshift=0.6*\a cm, red]
	(0,0) -- (0,-0.3) -- (-0.1,-0.4) -- (0.1,-0.6) -- (0,-0.7) -- (0,-1);

\end{scope}

\end{tikzpicture}
\caption{Earth map tiling $E_{\triangle}^J1$ for $rra$ and $rr{\color{red} r'}$.}
\label{tiling_hha5B}
\end{figure}

Suppose $1^{\frac{f}{2}}$ is not a vertex. Then $\text{AVC}=\{1^223,12^{q+1}3^{q+1}\}$, $q=\frac{f-4}{8}$, are all the vertices. Since the total numbers of the corners 1 and 2 in the tiling are the same, both must appear as vertices. We continue the argument for $h\bar{h}a$.

The vertex $12^{q+1}3^{q+1}$ consists of one fan $|2\rangle 1\rangle 3|$ and $q$ fans $|2\rangle 3|$. The $q$ fans $|2\rangle 3|$ determine a partial tiling ${\mc K}$ consisting of $q$ timezones on the left of Figure \ref{tiling_hha5A}. The boundaries of ${\mc K}$ are $l_1,l_2$. We need to fill the rest of the tiling. 

On the left of Figure \ref{tiling_hha8}, we draw the partial tiling ${\mc K}$ as the left of $l_1$ and the right of $l_2$. We need to fill the region on the right of $l_1$ and the left of $l_2$. The fan $|2\rangle 1\rangle 3|$ at $12^{q+1}3^{q+1}$ determines tiles \circled{1}, \circled{2}, \circled{3} in two possible ways. We will discuss the other way later.

\begin{figure}[htp]
\centering
\begin{tikzpicture}[>=latex]


\begin{scope}[font=\tiny]

\node at (-1.3,1.9) {3};
\node at (-1.15,0.15) {2};
\node at (-1.3,-1.9) {2};
\node at (-1.15,-0.15) {1};

\node at (2.3,1.9) {2};
\node at (2.15,0.15) {3};
\node at (2.3,-1.9) {3};
\node at (2.15,-0.15) {1};

\node at (0.5,1.9) {1};
\node at (1.5,1.9) {3};
\node at (-0.5,1.9) {2};

\node at (-0.5,-1.9) {1};
\node at (0.5,-1.9) {3};
\node at (1.5,-1.9) {2};

\node at (-0.15,1.3) {1};
\node at (-0.85,0.4) {3};
\node at (0.15,1.3) {3};
\node at (0.85,0.4) {2};
\node at (1.15,0.15) {1};
\node at (1.85,0.15) {2};

\node at (-0.15,-1.3) {2};
\node at (-0.85,-0.4) {3};
\node at (0.15,-1.3) {1};
\node at (0.85,-0.4) {2};
\node at (1.15,-0.15) {3};
\node at (1.85,-0.15) {1};

\node at (-0.15,-0.85) {3};
\node at (0.1,-0.9) {1};
\node at (-0.15,-0.5) {1};
\node at (0.15,-0.5) {2};

\node at (0.07,0.95) {2};
\node at (-0.15,0.85) {1};
\node at (-0.15,0.5) {3};
\node at (0.15,0.5) {1};

\node at (-0.6,0.3) {2};
\node at (0.6,0.35) {3};
\node at (-0.6,0) {3};
\node at (0.6,0) {2};
\node at (-0.6,-0.3) {2};
\node at (0.6,-0.3) {3};

\end{scope}

\begin{scope}[xshift=6cm, font=\tiny]

\node at (-1.3,1.9) {3};
\node at (-1.15,0.15) {2};
\node at (-1.3,-1.9) {2};
\node at (-1.15,-0.15) {1};

\node at (2.3,1.9) {2};
\node at (2.15,0.15) {3};
\node at (2.3,-1.9) {3};
\node at (2.15,-0.15) {1};

\node at (0.5,1.9) {1};
\node at (1.5,1.9) {3};
\node at (-0.5,1.9) {2};

\node at (-0.5,-1.9) {3};
\node at (0.5,-1.9) {2};
\node at (1.5,-1.9) {1};

\node at (-0.85,0.15) {3};
\node at (-0.85,-0.15) {1};
\node at (-0.15,0.15) {1};
\node at (-0.15,-0.15) {2};

\node at (0.15,0.4) {3};
\node at (0.15,-0.4) {3};
\node at (0.4,0.3) {2};
\node at (0.4,0) {3};
\node at (0.4,-0.3) {2};

\node at (1.85,0.4) {2};
\node at (1.85,-0.4) {2};
\node at (1.6,0.3) {3};
\node at (1.6,0) {2};
\node at (1.6,-0.35) {3};

\node at (0.85,1.3) {2};
\node at (1.15,1.3) {1};
\node at (0.85,-1.3) {1};
\node at (1.15,-1.3) {3};

\node[inner sep=0.5, draw, shape=circle] at (1.5,-1.4) {\tiny 9};

\end{scope}

\foreach \x in {0,1}
{
\begin{scope}[xshift=6*\x cm]

\node at (-1,2.2) {\tiny $l_1$};
\node at (2,2.2) {\tiny $l_2$};

\node[inner sep=0.5, draw, shape=circle] at (-0.5,1.4) {\tiny 1};
\node[inner sep=0.5, draw, shape=circle] at (0.5,1.4) {\tiny 2};
\node[inner sep=0.5, draw, shape=circle] at (1.5,1.4) {\tiny 3};

\end{scope}
}

\node[inner sep=0.5, draw, shape=circle] at (-0.5,-1.4) {\tiny 7};
\node[inner sep=0.5, draw, shape=circle] at (0.5,-1.4) {\tiny 10};
\node[inner sep=0.5, draw, shape=circle] at (1.5,-1.4) {\tiny 9};

\node[inner sep=0.5, draw, shape=circle] at (0.17,0.73) {\tiny 5};
\node[inner sep=0.5, draw, shape=circle] at (-0.35,0.5) {\tiny 4};
\node[inner sep=0.5, draw, shape=circle] at (-0.35,-0.5) {\tiny 6};
\node[inner sep=0.5, draw, shape=circle] at (0.17,-0.73) {\tiny 8};

\foreach \x in {0,7}
\fill
	(-1+\x,0) circle (0.05)
	(1+\x,0) circle (0.05);


\begin{scope}
[decoration={markings,mark=at position 0.5 with {
\draw[white, thick] (0.2,0)--(-0.2,0);
\draw (0.2,0)--(0,-0.1)--(-0.2,0);}}]


\foreach \y in {-1,2}
\draw[postaction={decorate}]
	(\y,-2) -- ++(0,2);

\draw[postaction={decorate}]
	(1,0) -- (1,2);
	
\draw[postaction={decorate}]
	(1,0) -- (2,0);
	
\foreach \y in {-0.4,1.2,-2}
\draw[postaction={decorate}]
	(0,\y) -- ++(0,0.8);

\foreach \y in {-0.4,1.2}
{
\draw[postaction={decorate}]
	(0,\y) -- ++(0,-0.8);
\draw[postaction={decorate}]
	(0,\y) -- (-1,0);
\draw[postaction={decorate}]
	(1,0) -- (0,-\y);
}


\begin{scope}[xshift=6cm]

\foreach \y in {-1,2}
\draw[postaction={decorate}]
	(\y,-2) -- ++(0,2);

\draw[postaction={decorate}]
	(0,0) -- (0,2);
	
\draw[postaction={decorate}]
	(0,0) -- (-1,0);

\foreach \y in {-0.4,1.2,-2}
\draw[postaction={decorate}]
	(1,\y) -- ++(0,0.8);

\foreach \y in {-0.4,1.2}
{
\draw[postaction={decorate}]
	(1,\y) -- ++(0,-0.8);
\draw[postaction={decorate}]
	(0,0) -- (1,-\y);
\draw[postaction={decorate}]
	(1,\y) -- (2,0);

}
		
\end{scope}
					
\end{scope}	


\draw
	(-1,2) -- (-1,0) -- (-1.5,0)
	(2,2) -- (2,0) -- (2.5,0)
	(1,0) -- (1,-2);

\foreach \y in {0.4,-1.2}
\draw
	(0,\y) -- (-1,0)
	(0,-\y) -- (1,0);

\begin{scope}[xshift=6cm]

\draw
	(-1,2) -- (-1,0) -- (-1.5,0)
	(2,2) -- (2,0) -- (2.5,0)
	(0,0) -- (0,-2);

\foreach \y in {0.4,-1.2}
\draw
	(1,-\y) -- (0,0)
	(1,\y) -- (2,0);
		
\end{scope}

\end{tikzpicture}
\caption{Modification of $E_{\triangle}^J1$ for $h\bar{h}a$.}
\label{tiling_hha8}
\end{figure}

The vertex $1_13_2\cdots=1^223$ or $12^{q+1}3^{q+1}$. In the left picture, we assume $1_13_2\cdots=1^223$. The vertex is one fan $|2\rangle 1\rangle 1\rangle 3|$, and determines \circled{4} and \circled{5}. Then $3_12_4\cdots=12^23\cdots=12^{q+1}3^{q+1}$. The vertex consists of one fan $|2\rangle 1\rangle 3|$ and $q$ fans $|2\rangle 3|$, which determine \circled{7} and a copy of ${\mc K}$ consisting of \circled{1}, \circled{2}, \circled{4}, \circled{5}, \circled{6}, \circled{8} and the tiles between them (the picture shows the case $q=2$). Then the fans at the vertex $1_32_23_53_8\cdots=12^{q+1}3^{q+1}$ determine \circled{9} and \circled{\footnotesize 10}. This fills the region on the right of $l_1$ and the left of $l_2$.

In the right of Figure \ref{tiling_hha8}, we assume $1_13_2\cdots=12^{q+1}3^{q+1}$. Then the one fan $|2\rangle 1\rangle 3|$ and $q$ fans $|2\rangle 3|$ at this vertex, together with the $h$-edge of the tile \circled{3}, determine all the tiles on the right of $l_1$ and the left of $l_2$.

Now consider the other way of arranging the fan $|2\rangle 1\rangle 3|$ at $12^{q+1}3^{q+1}$, with the corner 3 in the fan in \circled{1} and corner 2 in \circled{3}. Then we carry out the similar argument for the two cases of $1_12_2\cdots=1^223$ or $12^{q+1}3^{q+1}$. If $1_12_2\cdots=1^223$, then we get the same \circled{4}, \circled{5}, \circled{6}, \circled{8} and the tiles between them, such that corners 2 and 3 are switched in all these tiles. This implies the same \circled{9}, \circled{\footnotesize 10}, again with corners 2 and 3 switched. Then we find \circled{7} has two $\bar{h}$-edges, and \circled{9} has two $h$-edges. Both are contradictions. The assumption $1_12_2\cdots=12^{q+1}3^{q+1}$ leads to similar contradiction. Therefore the tilings in Figure \ref{tiling_hha8} are the only ones for $\text{AVC}=\{1^223,12^{q+1}3^{q+1}\}$ and $h\bar{h}a$.

Next we interpret the tilings in Figure \ref{tiling_hha8}. At the top of Figure \ref{tiling_hha9}, we find the earth map tiling $E_{\triangle}^J1$ can be decomposed into two copies of hemisphere tiling ${\mc H}$. Although $E_{\triangle}^J1$ is also a union of two ${\mc K}$, we choose not to use this viewpoint.

\begin{figure}[htp]
\centering
\begin{tikzpicture}[>=latex]
	

\begin{scope}[yshift=2.6cm]

\fill[yellow]
	(0.1,0.4) -- (2.9,0.4) -- (2.9,0.05) -- (2.4,0.05) -- (2.4,-0.4) -- (0.6,-0.4) -- (0.6,0.05) -- (0.1,0.05) 
	(-0.1,0.4) -- (-1.9,0.4) -- (-1.9,-0.05) -- (-2.4,-0.05) -- (-2.4,-0.4) -- (0.4,-0.4) -- (0.4,-0.05) -- (-0.1,-0.05);
	
\foreach \a in {-2,...,3}
\foreach \b in {0,1}
\draw[xshift=\a cm-0.5*\b cm, yshift=-0.5*\b cm]
	(0,0) -- (0,0.15) -- (0.05,0.25) -- (0,0.35) -- (0,0.5);
	
\foreach \a in {-2,...,2}
\draw[xshift=\a cm]	
	(0,-0.5) -- (0,0) -- (0.15,0) -- (0.25,-0.05) -- (0.35,0) -- (0.5,0) -- (0.5,0.5);

\foreach \a in {-2,...,3}
\draw[xshift=\a cm]	
	(0,0) -- (-0.15,0) -- (-0.25,0.05) -- (-0.35,0) -- (-0.5,0);

\draw[<->]
	(0,0.7) -- (3,0.7);
\draw[<->]
	(0,0.7) -- (-2,0.7);

\node at (1.5,0.9) {\tiny $(23)^{q+1}$};
\node at (-1,0.9) {\tiny $(23)^q$};

\node at (-1,-0.7) {${\mc H}$};
\node at (1.5,-0.7) {${\mc H}$};
		
\end{scope}


\begin{scope}[xshift=-4.8cm]

\foreach \a in {0,1,2}
\foreach \x in {0,...,3}
{
\begin{scope}
[xshift=3.2*\x cm, rotate=120*\a,
decoration={markings,mark=at position 0.5 with {
\draw[white, thick] (0.2,0)--(-0.2,0);
\draw (0.2,0)--(0,-0.1)--(-0.2,0);}}]

\draw[postaction={decorate}]
	(30:1.2) -- (-30:1.2);

\draw[postaction={decorate}]
	(30:1.2) -- (90:1.2);
					
\end{scope}	
}


\foreach \x in {0,2,3} 
{
\begin{scope}[xshift=3.2*\x cm]

\node at (90:1.4) {\tiny $(23)^{q+1}$};
\node at (30:1.3) {\tiny 1};
\node at (150:1.3) {\tiny 1};
\node at (210:1.4) {\tiny 123};
\node at (-30:1.4) {\tiny 123};
\node at (-90:1.4) {\tiny $(23)^q$};

\end{scope}	
}

\node at (90:0.9) {\tiny $(23)^q$};
\node at (30:0.9) {\tiny 123};
\node at (150:0.9) {\tiny 123};
\node at (210:1) {\tiny 1};
\node at (-30:1) {\tiny 1};
\node at (-85:0.85) {\tiny $(23)^{q+1}$};

\begin{scope}[xshift=6.4cm, rotate=120]

\node[rotate=30] at (90:0.8) {\tiny $(23)^q$};
\node at (30:0.9) {\tiny 123};
\node at (150:0.95) {\tiny 123};
\node at (210:1) {\tiny 1};
\node at (-30:1) {\tiny 1};
\node[rotate=30] at (-90:0.7) {\tiny $(23)^{q+1}$};

\end{scope}

\begin{scope}[xshift=9.6cm, rotate=-120]

\node[rotate=-30] at (90:0.8) {\tiny $(23)^q$};
\node at (30:0.95) {\tiny 123};
\node at (150:0.9) {\tiny 123};
\node at (210:1) {\tiny 1};
\node at (-30:1) {\tiny 1};
\node[rotate=-30] at (-90:0.7) {\tiny $(23)^{q+1}$};

\end{scope}

\node at (0,0) {${\mc H}$};
\node at (130:1.5) {${\mc H}$};

\begin{scope}[xshift=3.2cm]

\foreach \a in {0,1,2}
{
\node at (90+120*\a:1) {\tiny $\theta$};
\node at (90+120*\a:1.35) {\tiny $\bar{\theta}$};
\node at (-90+120*\a:1.35) {\tiny $\theta$};
\node at (-90+120*\a:1) {\tiny $\bar{\theta}$};
}

\draw[->, gray!50]
	(0:0.4) arc (0:120:0.4);
\draw[->, gray!50]
	(0:0.6) arc (0:240:0.6);	

\node at (-0.35,0.2) {$R$};
\node at (-0.15,-0.5) {$R$};

\end{scope}

\end{scope}

\end{tikzpicture}
\caption{Rotation modification $RE_{\triangle}^J1$ for $h\bar{h}a$ and $h\bar{h}r$.}
\label{tiling_hha9}
\end{figure}
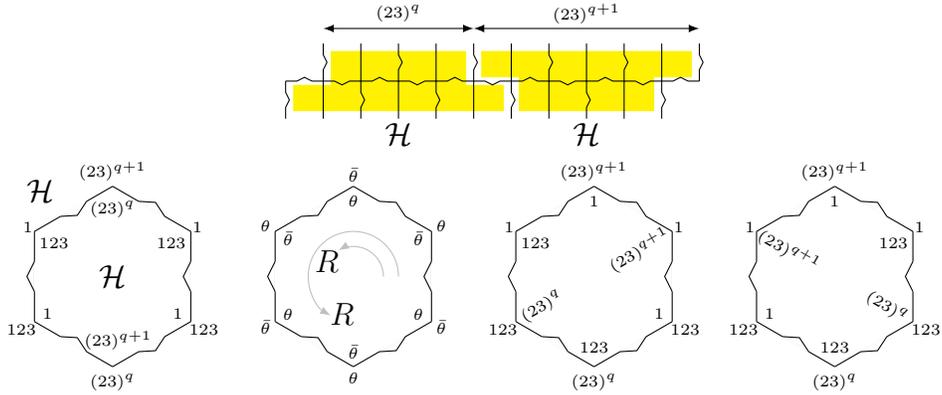

The bottom row consists of four pictures. The first is the same earth map tiling $E_{\triangle}^J1$, with two ${\mc H}$ being the interior and the exterior of a circle. We indicate the edges and corner combinations along the circle between the two hemispheres. In the second picture, we indicate the angle values of the corner combinations, with $\theta=(1-\frac{4}{f})\pi$ and $\bar{\theta}=2\pi-\theta=(1+\frac{4}{f})\pi$. Then it is clear that we may rotate the inner ${\mc H}$ by $\frac{2}{3}\pi$ or $\frac{4}{3}\pi$, and still get a tiling. These are the rotation modifications $RE_{\triangle}^J1$ of the earth map tiling $E_{\triangle}^J1$. The corner combinations along the circle in the rotation modifications $RE_{\triangle}^J1$ are indicated in the third and fourth pictures.

On the left of Figure \ref{tiling_hha8}, the left of $l_1$, the right of $l_2$, \circled{1} and \circled{3} form one hemisphere tiling ${\mc H}$. The rest of the tilings, consisting of \circled{2}, \circled{4}, \circled{6}, \circled{7}, \circled{8}, \circled{9}, \circled{\footnotesize 10} and the tiles between them, form another hemisphere tiling ${\mc H}$. If we regard the first ${\mc H}$ as outside the circle, and the second ${\mc H}$ as inside the circle, then the tiling is the rotation modification by $\frac{2}{3}\pi$. Similarly, the right of Figure \ref{tiling_hha8} is the rotation modification by $\frac{4}{3}\pi$. By exchanging the interior ${\mc H}$ and the exterior ${\mc H}$, we fine the two rotation modifications are actually the same tiling.

We just completed the argument that the $h\bar{h}a$-tilings for (1) are the earth map tiling $E_{\triangle}^J1$ and the rotation modification $RE_{\triangle}^J1$. The same argument can be applied to $h\bar{h}r$, $rra$, $rr{\color{red} r'}$, and we get similar conclusions. All have the usual earth map tiling $E_{\triangle}^J1$ given by Figures \ref{tiling_hha5A} and \ref{tiling_hha5B}. The two rotation modifications for $h\bar{h}r$ are given by Figure \ref{tiling_hha9}, and are still the same. 

The two rotation modifications for $rra$ and $rr{\color{red} r'}$ are given by Figure \ref{tiling_hha10}. Since the exchange of the interior ${\mc H}$ and the exterior ${\mc H}$ would change $r$ in the boundary circle to $r^{-1}$, we cannot apply the exchange, and the two rotation modifications are actually different tilings for $rra$ and $rr{\color{red} r'}$.

\begin{figure}[htp]
\centering
\begin{tikzpicture}[>=latex]

\foreach \a in {0,1,2}
\foreach \x in {0,...,3}
{
\begin{scope}[xshift=3.2*\x cm, rotate=120*\a, decoration={markings,mark=at position 0.5 with {
\draw[white,thick](0.2,0)--(-0.2,0);
\draw (0.2,0)--(0.1,0.1)--(-0.1,-0.1)--(-0.2,0);}}]

\draw[postaction={decorate}]
	(30:1.2) -- (-30:1.2);

\draw[postaction={decorate}]
	(30:1.2) -- (90:1.2);
					
\end{scope}	
}


\foreach \x in {0,2,3} 
{
\begin{scope}[xshift=3.2*\x cm]

\node at (90:1.4) {\tiny $(23)^{q+1}$};
\node at (30:1.3) {\tiny 1};
\node at (150:1.3) {\tiny 1};
\node at (210:1.4) {\tiny 123};
\node at (-30:1.4) {\tiny 123};
\node at (-90:1.4) {\tiny $(23)^q$};

\end{scope}	
}

\node at (90:0.9) {\tiny $(23)^q$};
\node at (30:0.9) {\tiny 123};
\node at (150:0.9) {\tiny 123};
\node at (210:1) {\tiny 1};
\node at (-30:1) {\tiny 1};
\node at (-85:0.85) {\tiny $(23)^{q+1}$};

\begin{scope}[xshift=6.4cm, rotate=120]

\node[rotate=30] at (90:0.8) {\tiny $(23)^q$};
\node at (30:0.9) {\tiny 123};
\node at (150:0.95) {\tiny 123};
\node at (210:1) {\tiny 1};
\node at (-30:1) {\tiny 1};
\node[rotate=30] at (-90:0.7) {\tiny $(23)^{q+1}$};

\end{scope}

\begin{scope}[xshift=9.6cm, rotate=-120]

\node[rotate=-30] at (90:0.8) {\tiny $(23)^q$};
\node at (30:0.95) {\tiny 123};
\node at (150:0.9) {\tiny 123};
\node at (210:1) {\tiny 1};
\node at (-30:1) {\tiny 1};
\node[rotate=-30] at (-90:0.7) {\tiny $(23)^{q+1}$};

\end{scope}

\node at (0,0) {${\mc H}$};
\node at (130:1.5) {${\mc H}$};

\begin{scope}[xshift=3.2cm]

\foreach \a in {0,1,2}
{
\node at (90+120*\a:1) {\tiny $\theta$};
\node at (90+120*\a:1.35) {\tiny $\bar{\theta}$};
\node at (-90+120*\a:1.35) {\tiny $\theta$};
\node at (-90+120*\a:1) {\tiny $\bar{\theta}$};
}

\draw[->, gray!50]
	(0:0.4) arc (0:120:0.4);
\draw[->, gray!50]
	(0:0.6) arc (0:240:0.6);	

\node at (-0.35,0.2) {$R$};
\node at (-0.15,-0.5) {$R$};

\end{scope}

\end{tikzpicture}
\caption{Rotation modifications $RE_{\triangle}^J1$ for $rra$ and $rr{\color{red} r'}$.}
\label{tiling_hha10}
\end{figure}

\medskip

\noindent {\bf Case (3).} Earth map tiling $E_{\triangle}2$ and modifications.

\medskip

We know all the vertices $\text{AVC}
=\{12^23^2,1^{\frac{f+4}{8}}23,1^{\frac{f}{4}}\}$. We discuss $h\bar{h}a$-tilings. The proof for the other types are similar.

Two adjacent corners $1$ at a vertex form $\langle 1\langle 1\langle$. This determines \circled{1} and \circled{2} in Figure \ref{tiling_hha6}. The vertex $2_23_1\cdots=12^23^2$ or $1^{\frac{f+4}{8}}23$. Since $1^{\frac{f+4}{8}}23$ is one fan $|3\langle 1\langle \cdots\langle 1\langle 2|$, which is incompatible with the $h$-edge between the corners $2_2$ and $3_1$, we know $2_23_1\cdots=12^23^2$. This is a combination of the fan $|3_1\langle 2_2|$ and a fan $|3\langle 1\langle 2|$. The fan $|3\langle 1\langle 2|$ determines \circled{3}, \circled{4}, \circled{5} in two ways, given by the respective pictures. Then there is a tile \circled{6} sharing the $a$-edge with \circled{4}. Again \circled{6} can be placed in two ways, given by the respective pictures. 

\begin{figure}[htp]
\centering
\begin{tikzpicture}[>=latex]

\begin{scope}[gray!50, line width=3]

\draw[<-]
	(-2.7,0.5) -- (2.7,0.5);
\draw[<-]
	(-2.5,-0.5) -- (2.5,-0.5);
\draw[->]
	(-2.2,-1.5) -- (2.2,-1.5);

\begin{scope}[xshift=6.5cm]

\draw[<-]
	(-2.7,0.5) -- (2.7,0.5);
\draw[->]
	(-2.5,-0.5) -- (2.5,-0.5);
\draw[<-]
	(-2.2,-1.5) -- (2.2,-1.5);

\end{scope}
		
\end{scope}


\foreach \x in {-2,...,2}
{
\begin{scope}[xshift=\x cm, font=\tiny]

\node at (0,-0.7) {1};
\node at (0.3,-0.15) {3};
\node at (-0.3,-0.15) {2};

\end{scope}
}

\foreach \x in {-2,...,2}
{
\begin{scope}[xshift=6.5cm+\x cm, font=\tiny]

\node at (0,-0.7) {1};
\node at (0.3,-0.15) {2};
\node at (-0.3,-0.15) {3};

\end{scope}
}

\foreach \x in {-2,...,1}
{
\begin{scope}[xshift=\x cm, yshift=-1cm, font=\tiny]

\node at (0.5,-1) {1};
\node at (0.15,-0.15) {3};
\node at (0.85,-0.15) {2};

\node at (0.5,0.7) {1};
\node at (0.2,0.15) {3};
\node at (0.8,0.15) {2};

\end{scope}
}

\foreach \x in {-2,...,1}
{
\begin{scope}[xshift=6.5 cm + \x cm, yshift=-1cm, font=\tiny]

\node at (0.5,-1) {1};
\node at (0.15,-0.15) {2};
\node at (0.85,-0.15) {3};

\node at (0.5,0.7) {1};
\node at (0.2,0.15) {2};
\node at (0.8,0.15) {3};

\end{scope}
}

\foreach \u in {0,1}
{
\begin{scope}[xshift=6.5*\u cm, font=\tiny]

\foreach \x in {-2,...,2}
{
\begin{scope}[xshift=\x cm]

\node at (0,1) {1};
\node at (-0.35,0.15) {2};
\node at (0.35,0.15) {3};

\end{scope}
}

\node[inner sep=0.5, draw, shape=circle] at (-1,0.5) {1};
\node[inner sep=0.5, draw, shape=circle] at (0,0.5) {2};
\node[inner sep=0.5, draw, shape=circle] at (-1,-0.4) {3};
\node[inner sep=0.5, draw, shape=circle] at (-0.5,-0.6) {4};
\node[inner sep=0.5, draw, shape=circle] at (0,-0.4) {5};
\node[inner sep=0.5, draw, shape=circle] at (-0.5,-1.5) {6};

\draw
	(-2.5,0) -- (2.5,0)
	(-2,-1) -- (2,-1);

\end{scope}
}

\node at (9,1.2) {\tiny $l_1$};
\node at (4,1.2) {\tiny $l_2$};


\begin{scope}
[decoration={markings,mark=at position 0.5 with {
\draw[white, thick] (0.2,0)--(-0.2,0);
\draw (0.2,0)--(0,0.1)--(-0.2,0);}}]

\foreach \x in {-2,...,3}
\foreach \u in {0,1}
\draw[postaction={decorate}, xshift=\x cm+6.5*\u cm]
	(-0.5,0) -- (-0.5,1);

\foreach \x in {-2,...,2}
{
\begin{scope}[xshift=\x cm]

\draw[postaction={decorate}]
	(0,-1) -- (-0.5,0);

\draw[postaction={decorate}]
	(0,-1) -- (0.5,0);
		
\end{scope}
}

\foreach \x in {-2,...,2}
\draw[postaction={decorate}, xshift=\x cm]
	(0,-1) -- (0,-2);

\begin{scope}[xshift=6.5cm]

\foreach \x in {-2,...,2}
{
\begin{scope}[xshift=\x cm]

\draw[postaction={decorate}]
	(-0.5,0) -- (0,-1);
\draw[postaction={decorate}]
	(0.5,0) -- (0,-1);
		
\end{scope}
}

\foreach \x in {-2,...,2}
\draw[postaction={decorate}, xshift=\x cm]
	(0,-2) -- (0,-1);

\end{scope}

\end{scope}	

\end{tikzpicture}
\caption{Earth map tiling $E_{\triangle}2$ for $h\bar{h}a$.}
\label{tiling_hha6}
\end{figure}

More generally, $q+1$ consecutive corners 1 at a vertex form $\langle 1\langle 1\langle \cdots\langle 1\langle 1\langle$. The sequence contains $q$ pairs $\langle 1\langle 1\langle$, and each pair determines six tiles \circled{1}, \circled{2}, \circled{3}, \circled{4}, \circled{5}, \circled{6} as in Figure \ref{tiling_hha6}. For all the sextuples to be compatible, we get the partial tiling in Figure \ref{tiling_hha6}, with $1^{q+1}$ at one end and $1^q$ at the other end ($q=4$ in the picture). The partial tiling has three layers, and each layer has a direction (indicated by gray arrows) that we can independently choose. Due to the independent choice of the directions of the three layers, there are actually three distinct versions of $E_{\triangle}2$ for $h\bar{h}a$.

The argument above also applies to $h\bar{h}r$, $rra$, $rr{\color{red} r'}$. The difference is that, compared with $h\bar{h}a$, the $r$-edge in $h\bar{h}r$ and the ${\color{red} r'}$-edge in $rr{\color{red} r'}$ impose extra constraint on the tiling. As a result, we only have one version of $E_{\triangle}2$ for $h\bar{h}r$ and $rr{\color{red} r'}$, given by the left and right of Figure \ref{tiling_hha11}. On the other hand, the situation for $rra$ is similar to $h\bar{h}a$. Each layer consists of either all $r$, or all $r^{-1}$, and the choices can be independent. There are three distinct versions of $E_{\triangle}2$ for $rra$. See the middle two of Figure \ref{tiling_hha11}.

\begin{figure}[htp]
\centering
\begin{tikzpicture}[>=latex, scale=0.5]


\begin{scope}
[decoration={markings,mark=at position 0.5 with {
\draw[white, thick] (0.1,0)--(-0.1,0);
\draw (0.1,0)--(0,0.05)--(-0.1,0);}},
xshift=-6.5cm]

\foreach \x in {-2,...,3}
\draw[postaction={decorate}, xshift=\x cm]
	(-0.5,0) -- (-0.5,1);

\foreach \x in {-2,...,2}
{
\begin{scope}[xshift=\x cm]

\draw[postaction={decorate}]
	(-0.5,0) -- (0,-1);

\draw[postaction={decorate}]
	(0.5,0) -- (0,-1);

\draw[postaction={decorate}]
	(0,-2) -- (0,-1);
			
\end{scope}
}

\end{scope}	


\begin{scope}
[decoration={markings,mark=at position 0.5 with {
\draw[white,thick](0.1,0)--(-0.1,0);
\draw (0.1,0)--(0.05,0.05)--(-0.05,-0.05)--(-0.1,0);}}]

\foreach \x in {-2,...,2}
\draw[postaction={decorate}, xshift=-6.5cm+\x cm]
	(-0.5,0) -- (0.5,0);

\foreach \x in {-2,...,1}
\draw[postaction={decorate}, xshift=-6.5cm+\x cm]
	(0,-1) -- (1,-1);


\foreach \x in {-2,...,3}
\foreach \u in {0,1,2}
\draw[postaction={decorate}, xshift=\x cm+6.5*\u cm]
	(-0.5,0) -- (-0.5,1);

\foreach \x in {-2,...,2}
\foreach \u in {0,2}
{
\draw[postaction={decorate}]
	(-0.5+\x+6.5*\u,0) -- ++(0.5,-1);

\draw[postaction={decorate}]
	(0.5+\x+6.5*\u,0) -- ++(-0.5,-1);

\draw[postaction={decorate}]
	(\x+6.5*\u,-1) -- ++(0,-1);		
}

\foreach \x in {-2,...,3}
\foreach \u in {0,1,2}
\draw[postaction={decorate}, xshift=\x cm+6.5*\u cm]
	(-0.5,0) -- (-0.5,1);

\foreach \x in {-2,...,2}
\draw[postaction={decorate}, xshift=13cm+\x cm, red]
	(-0.5,0) -- (0.5,0);

\foreach \x in {-2,...,1}
\draw[postaction={decorate}, xshift=13cm+\x cm, red]
	(0,-1) -- (1,-1);
	
\end{scope}


\begin{scope}
[decoration={markings,mark=at position 0.5 with {
\draw[white,thick](0.1,0)--(-0.1,0);
\draw (0.1,0)--(0.05,-0.05)--(-0.05,0.05)--(-0.1,0);
\filldraw[fill=white] (0,0) circle (0.03); }},
xshift=6.5cm]

\foreach \x in {-2,...,2}
{
\draw[postaction={decorate}]
	(-0.5+\x,0) -- ++(0.5,-1);

\draw[postaction={decorate}]
	(0.5+\x,0) -- ++(-0.5,-1);

\draw[postaction={decorate}]
	(\x,-1) -- ++(0,-1);		
}

\end{scope}


\foreach \u in {0,1}
\draw[xshift=6.5*\u cm]
	(-2.5,0) -- (2.5,0)
	(-2,-1) -- (2,-1);

\end{tikzpicture}
\caption{Earth map tiling $E_{\triangle}2$ for $h\bar{h}r$, $rra$, $rr{\color{red} r'}$.}
\label{tiling_hha11}
\end{figure}

Suppose $1^{\frac{f}{4}}$ is not a vertex. Then $\text{AVC}=\{12^23^2,1^{q+1}23\}$, $q=\frac{f-4}{8}$, are all the vertices. Since the total numbers of the corners 1 and 2 in the tiling are the same, both must appear as vertices. We continue the argument for $h\bar{h}a$.

The vertex $1^{q+1}23$ is one fan $|3\langle 1\langle \dots \langle 1\langle 2|$. The $1^{q+1}$ part of the vertex induces a partial tiling ${\mc H}$ with $1^q$ at the other end. There are three versions of ${\mc H}$, and we first argue for the version on the right of Figure \ref{tiling_hha6}. In Figure \ref{tiling_hha12A}, this ${\mc H}$ is the left of $l_1$ and the right of $l_2$. We also highlight the $a$-edges by thick lines. We need to fill the region on the right of $l_1$ and the left of $l_2$. 

The $23$ part of the vertex $1^{q+1}23$ determines \circled{1} and \circled{2} in this region. Then $1_1\cdots=12^23^2,1^{q+1}23$. The left of Figure \ref{tiling_hha12A} is the case $1_1\cdots=12^23^2$. The vertex $12^23^2$ consists of one fan $|3\langle 2|$ and one fan $|3\langle 1\langle 2|$. By \circled{1} and the tiling ${\mc H}$ on the left of $l_1$, this information about fans determine \circled{3} and \circled{4}. Then $1_4\cdots=1^223\cdots=1^{q+1}23$. The $1^{q+1}$ part of this vertex determines another partial tiling ${\mc H}$ that consists of \circled{1}, \circled{2}, \circled{3}, \circled{4}, \circled{5}, \circled{6}, \circled{7} and the tiles between them, and two tiles on the left of $l_1$ (the picture shows the case $q=3$). The two ends $1^{q+1}$ and $1^q$ are indicated by $\bullet$. Then $1_21_7\cdots=1^{q+1}23$, and the $1^{q+1}$ part of this vertex determines \circled{8} and \circled{9}. 

\begin{figure}[htp]
\centering
\begin{tikzpicture}[>=latex]

\draw[<-, line width=3, gray!50]
	(-2,1.2) to[out=0, in=90]
	(0,-0.7) to[out=-90, in=180]
	(2,-1.2);

\draw[->, line width=3, gray!50]
	(-2,0) to[out=0, in=90]
	(-0.8,-0.6) to[out=-90, in=0]
	(-2,-1.2);

\draw[<-, line width=3, gray!50]
	(2,0) to[out=180, in=-90]
	(0.8,0.6) to[out=90, in=180]
	(2,1.2);
	
\begin{scope}[xshift=6cm]

\draw[->, line width=3, gray!50]
	(2,1.2) to[out=180, in=90]
	(0,-0.7) to[out=-90, in=0]
	(-2,-1.2);

\draw[->, line width=3, gray!50]
	(-2,0) to[out=0, in=-90]
	(-0.8,0.6) to[out=90, in=0]
	(-2,1.2);

\draw[<-, line width=3, gray!50]
	(2,0) to[out=180, in=90]
	(0.8,-0.6) to[out=-90, in=180]
	(2,-1.2);

\end{scope}

\begin{scope}[font=\tiny]

\node at (-0.75,1.7) {2};
\node at (0.75,1.7) {3};
\node at (-0.15,1.1) {3};
\node at (0.15,1.1) {2};
\node at (-0.25,0.8) {1};
\node at (0.25,0.8) {3};
\node at (0,0.75) {2};

\node at (-1,-1.7) {2};
\node at (0,-1.7) {3};
\node at (1,-1.7) {1};

\node at (-1.35,0.8) {1};
\node at (1.35,0.8) {1};
\node at (-1.2,0.55) {2};
\node at (1.2,0.55) {1};
\node at (-1.35,0.35) {3};
\node at (1.2,0.3) {1};
\node at (1.4,0.25) {1};
\node at (-1.35,-0.35) {1};
\node at (-1.2,-0.57) {1};
\node at (1.35,-0.5) {2};
\node at (-1.35,-0.8) {1};
\node at (1.35,-0.8) {3};

\node at (2,1.7) {1};
\node at (-2,1.7) {1};
\node at (-1.65,0.75) {3};
\node at (1.65,0.75) {2};
\node at (-1.65,0.45) {2};
\node at (1.65,0.45) {3};
\node at (-1.6,-0.3) {1};
\node at (1.6,-0.3) {1};
\node at (-1.8,-0.45) {2};
\node at (1.8,-0.45) {3};
\node at (-1.65,-0.75) {3};
\node at (1.65,-0.75) {2};
\node at (2,-1.7) {1};
\node at (-2,-1.7) {1};

\node at (-1.5,2) {$l_1$};
\node at (1.5,2) {$l_2$};
\node at (4.5,2) {$l_1$};
\node at (7.5,2) {$l_2$};

\node[inner sep=0.5, draw, shape=circle] at (-0.75,1.3) {1};
\node[inner sep=0.5, draw, shape=circle] at (0.75,1.3) {2};
\node[inner sep=0.5, draw, shape=circle] at (-0.75,0.5) {3};
\node[inner sep=0.5, draw, shape=circle] at (-1,0) {4};
\node[inner sep=0.5, draw, shape=circle] at (-1,-1.3) {5};
\node[inner sep=0.5, draw, shape=circle] at (0,-1.3) {6};
\node[inner sep=0.5, draw, shape=circle] at (0.8,-0.1) {7};
\node[inner sep=0.5, draw, shape=circle] at (1.1,-0.5) {8};
\node[inner sep=0.5, draw, shape=circle] at (1,-1.3) {9};

\end{scope}

\fill
	(-1.5,-0.6) circle (0.05)
	(1.5,0.6) circle (0.05);

\fill[xshift=6 cm]
	(-1.5,0.6) circle (0.05)
	(1.5,-0.6) circle (0.05);

\begin{scope}[xshift=6cm, font=\tiny]

\node at (-0.75,1.7) {2};
\node at (0.75,1.7) {3};
\node at (-0.15,1.1) {3};
\node at (0.15,1.1) {2};
\node at (-0.25,0.8) {2};
\node at (0.25,0.8) {1};
\node at (0,0.75) {3};

\node at (-1,-1.7) {3};
\node at (0,-1.7) {2};
\node at (1,-1.7) {1};

\node at (-1.35,0.8) {1};
\node at (1.35,0.8) {1};
\node at (-1.2,0.55) {1};
\node at (1.2,0.55) {3};
\node at (1.35,0.35) {2};
\node at (-1.2,0.3) {1};
\node at (-1.4,0.25) {1};
\node at (1.35,-0.35) {1};
\node at (1.2,-0.57) {1};
\node at (1.35,-0.8) {1};
\node at (-1.35,-0.5) {3};
\node at (-1.35,-0.8) {2};

\node at (2,1.7) {1};
\node at (-2,1.7) {1};
\node at (-1.65,0.75) {3};
\node at (1.65,0.75) {2};
\node at (-1.65,0.45) {2};
\node at (1.65,0.45) {3};
\node at (-1.6,-0.3) {1};
\node at (1.6,-0.3) {1};
\node at (-1.8,-0.45) {2};
\node at (1.8,-0.45) {3};
\node at (-1.65,-0.75) {3};
\node at (1.65,-0.75) {2};
\node at (2,-1.7) {1};
\node at (-2,-1.7) {1};

\node at (-1.5,2) {$l_1$};
\node at (1.5,2) {$l_2$};

\node[inner sep=0.5, draw, shape=circle] at (-0.75,1.3) {1};
\node[inner sep=0.5, draw, shape=circle] at (0.75,1.3) {2};

\end{scope}


\begin{scope}
[decoration={markings,mark=at position 0.5 with {
\draw[white, thick] (0.2,0)--(-0.2,0);
\draw (0.2,0)--(0,0.1)--(-0.2,0);}}]

\foreach \u in {0,1}
{
\begin{scope}[xshift=6*\u cm]

\foreach \x in {-1,1}
{
\draw[postaction={decorate}]
	(1.5*\x,0.6) -- ++(0,-1.2);

\draw[postaction={decorate}]
	(1.5*\x,0.6) -- ++(0,1.2);

\draw[postaction={decorate}]
	(1.5*\x,-1.8) -- ++(0,1.2);
		
\draw[postaction={decorate}]
	(-2.5,0.6) -- (-1.5,-0.6);
\draw[postaction={decorate}]
	(2.5,0.6) -- (1.5,-0.6);
	
}

\end{scope}
}

\foreach \x/\u in {1/0, -1/1}
{
\begin{scope}[xshift=6*\u cm, xscale=\x]

\foreach \y in {0,-1}
\draw[postaction={decorate}]
	(-0.5,\y) -- (0.5,\y);
\draw[postaction={decorate}]
	(-0.5,-1) -- (0.5,0);
\draw[postaction={decorate}]
	(0.5,-1.8) -- (0.5,-1);
\draw[postaction={decorate}]
	(-1.5,0.6) -- (0,1);	
\draw[postaction={decorate}]
	(-0.5,0) -- (0,1);
\draw[postaction={decorate}]
	(-0.5,0) -- (-1.5,-0.6);
\draw[postaction={decorate}]
	(-0.5,-1) -- (-1.5,-0.6);
	
\foreach \a/\b in {0/1, 0.5/0, 0.5/-1}
\draw[postaction={decorate}]
	(1.5,0.6) -- (\a,\b);

\end{scope}
}
					
\end{scope}	


\foreach \x/\u in {1/0, -1/1}
{
\begin{scope}[xshift=6*\u cm, xscale=\x]

\draw[thick]
	(0,1.8) -- (0,1) -- (0.5,0) -- (0.5,-1) -- (1.5,-0.6) -- (2.5,-0.6)
	(1.5,0.6) -- (2.5,0.6)
	(-2.5,0.6) -- (-1.5,0.6) -- (-0.5,0) -- (-0.5,-1.8)
	(-2.5,-0.6) -- (-1.5,-0.6);

\end{scope}
}

\end{tikzpicture}
\caption{Modification of $E_{\triangle}2$ for $h\bar{h}a$.}
\label{tiling_hha12A}
\end{figure}

The boundary between the layers in two ${\mc H}$ in the left picture are the $a$-edges. In fact, these layers together form one strip, and all $h$ in this strip should have the same direction. The compatible directions are indicated by the thick gray arrows, and imply that the three layers of both ${\mc H}$ must be alternating. This is the version on the right of Figure \ref{tiling_hha6}, and is the reason we use this version for our argument. Any other version would lead to contradiction in directions. 

We still need to consider the case that the vertex $1_1\cdots=1^{q+1}23$. This is the right of Figure \ref{tiling_hha12A}. The $1^{q+1}$ part of the vertex induces a partial earth tiling ${\mc H}$ that fills the region on the right of $l_1$ and the left of $l_2$. Again we need to check the compatibility of the directions of the layers. We find they must be alternating, as indicated by the thick gray arrows.

We may apply the same argument to $h\bar{h}r$, $rra$, $rr{\color{red} r'}$, for the updated $\text{AVC}=\{12^23^2,1^{q+1}23\}$. We get similar tilings. Specifically, tilings for $h\bar{h}r$ are obtained by replacing all $a$-edges in Figure \ref{tiling_hha12A} to $r$-edges (not $r^{-1}$-edges). For $rra$, we get the tilings in Figure \ref{tiling_hha12B}, completely similar to Figure \ref{tiling_hha12A}. We indicate the $a$-edges by thick lines, which also serve as the boundary between the layers of two ${\mc H}$. These layers together again form one strip. Therefore there are only $r$-edges, and no $r^{-1}$-edges. For $rr{\color{red} r'}$, the tilings are obtained by replacing the $a$-edges in Figure \ref{tiling_hha12B} by ${\color{red} r'}$-edges.

\begin{figure}[htp]
\centering
\begin{tikzpicture}[>=latex]

\fill
	(-1.5,-0.6) circle (0.05)
	(1.5,0.6) circle (0.05);

\fill[xshift=6 cm]
	(-1.5,0.6) circle (0.05)
	(1.5,-0.6) circle (0.05);
		

\begin{scope}
[decoration={markings,mark=at position 0.5 with {
\draw[white, thick] (0.2,0)--(-0.2,0);
\draw (0.2,0)--(0.1,0.1)--(-0.1,-0.1)--(-0.2,0);}}]

\foreach \u in {0,1}
{
\begin{scope}[xshift=6*\u cm]

\foreach \x in {-1,1}
{
\draw[postaction={decorate}]
	(1.5*\x,0.6) -- ++(0,-1.2);

\draw[postaction={decorate}]
	(1.5*\x,0.6) -- ++(0,1.2);

\draw[postaction={decorate}]
	(1.5*\x,-1.8) -- ++(0,1.2);
		
\draw[postaction={decorate}]
	(-2.5,0.6) -- (-1.5,-0.6);
\draw[postaction={decorate}]
	(2.5,0.6) -- (1.5,-0.6);
	
}

\end{scope}
}

\foreach \x/\u in {1/0, -1/1}
{
\begin{scope}[xshift=6*\u cm, xscale=\x]

\foreach \y in {0,-1}
\draw[postaction={decorate}]
	(-0.5,\y) -- (0.5,\y);
\draw[postaction={decorate}]
	(-0.5,-1) -- (0.5,0);
\draw[postaction={decorate}]
	(0.5,-1.8) -- (0.5,-1);
\draw[postaction={decorate}]
	(-1.5,0.6) -- (0,1);	
\draw[postaction={decorate}]
	(-0.5,0) -- (0,1);
\draw[postaction={decorate}]
	(-0.5,0) -- (-1.5,-0.6);
\draw[postaction={decorate}]
	(-0.5,-1) -- (-1.5,-0.6);
	
\foreach \a/\b in {0/1, 0.5/0, 0.5/-1}
\draw[postaction={decorate}]
	(1.5,0.6) -- (\a,\b);

\end{scope}
}
					
\end{scope}	


\foreach \x/\u in {1/0, -1/1}
{
\begin{scope}[xshift=6*\u cm, xscale=\x]

\draw[thick]
	(0,1.8) -- (0,1) -- (0.5,0) -- (0.5,-1) -- (1.5,-0.6) -- (2.5,-0.6)
	(1.5,0.6) -- (2.5,0.6)
	(-2.5,0.6) -- (-1.5,0.6) -- (-0.5,0) -- (-0.5,-1.8)
	(-2.5,-0.6) -- (-1.5,-0.6);

\end{scope}
}

\end{tikzpicture}
\caption{Modification of $E_{\triangle}2$ for $rra$.}
\label{tiling_hha12B}
\end{figure}

Finally, we interpret the tilings in Figures \ref{tiling_hha12A} and \ref{tiling_hha12B}. The tilings are the unions of two partial tilings ${\mc H}$, one on the left of $l_1$ and the right of $l_2$, and another on the right of $l_1$ and the left of $l_2$. Therefore ${\mc H}$ is actually a hemisphere tiling. In Figure \ref{tiling_hha13}, we draw the union of two tilings similar to the right of Figure \ref{tiling_hha10}. The left two correspond the two tilings in Figure \ref{tiling_hha12A} for $h\bar{h}a$ (and $h\bar{h}r$), and the right two correspond the two tilings in Figure \ref{tiling_hha12B} for $rra$ (and $rr{\color{red} r'}$).

\begin{figure}[htp]
\centering
\begin{tikzpicture}[>=latex]


\foreach \a in {0,1,2}
\foreach \x in {0,1}
{
\begin{scope}
[xshift=3.2*\x cm, rotate=120*\a,
decoration={markings,mark=at position 0.5 with {
\draw[white, thick] (0.2,0)--(-0.2,0);
\draw (0.2,0)--(0,0.1)--(-0.2,0);}}]

\draw[postaction={decorate}]
	(30:1.2) -- (-30:1.2);

\draw[postaction={decorate}]
	(30:1.2) -- (90:1.2);
					
\end{scope}	
}


\foreach \x in {2,3}
\foreach \a in {0,...,5}
{
\begin{scope}[xshift=3.2*\x cm, rotate=60*\a, decoration={markings,mark=at position 0.5 with {
\draw[white,thick](0.2,0)--(-0.2,0);
\draw (0.2,0)--(0.1,0.1)--(-0.1,-0.1)--(-0.2,0);}}]

\draw[postaction={decorate}]
	(30:1.2) -- (90:1.2);
		
\end{scope}
}

\foreach \x in {0,...,3}
{
\begin{scope}[xshift=3.2*\x cm, font=\tiny]

\node at (90:1.4) {$1^{q+1}$};
\node at (30:1.4) {23};
\node at (150:1.4) {23};
\node at (210:1.4) {123};
\node at (-30:1.4) {123};
\node at (-90:1.4) {$1^q$};

\end{scope}
}

\foreach \x in {0,2}
{
\begin{scope}[xshift=3.2*\x cm, rotate=120, font=\tiny]

\node[rotate=30] at (90:0.95) {$1^q$};
\node at (30:0.9) {123};
\node at (150:0.95) {123};
\node at (210:1) {23};
\node at (-30:1) {23};
\node[rotate=30] at (-90:0.85) {$1^{q+1}$};

\end{scope}
}

\foreach \x in {1,3}
{
\begin{scope}[xshift=3.2*\x cm, rotate=-120, font=\tiny]

\node[rotate=-30] at (90:0.95) {$1^q$};
\node at (30:0.95) {123};
\node at (150:0.9) {123};
\node at (210:1) {23};
\node at (-30:1) {23};
\node[rotate=-30] at (-90:0.85) {$1^{q+1}$};

\end{scope}
}

\end{tikzpicture}
\caption{Rotation modifications $RE_{\triangle}2$.}
\label{tiling_hha13}
\end{figure}

The standard earth map tiling $E_{\triangle}2$ is described by the similar pictures where $1^q$ and $1^{q+1}$ are matched, and $23$ and $123$ are matched. The first and third of Figure \ref{tiling_hha13} are obtained by rotating the interior ${\mc H}$ of $E_{\triangle}2$ by $\frac{2}{3}\pi$. The second and fourth of Figure \ref{tiling_hha13} are obtained by rotating by $\frac{4}{3}\pi$. Therefore tilings for $\text{AVC}=\{12^23^2,1^{q+1}23\}$ are the two rotation modifications. 

By exchanging the interior and exterior hemispheres ${\mc H}$, we find the two rotations are equivalent tilings for $h\bar{h}a$ and $h\bar{h}r$. However, we cannot do the exchange for $rra$ and $rr{\color{red} r'}$ because this changes $r$ on the boundary circle to $r^{-1}$. Therefore the two rotations are distinct tilings for $rra$ and $rr{\color{red} r'}$.
\end{proof}

\begin{proposition}
Tilings of the sphere by congruent $raa$-triangles are the following:
\begin{itemize}
\item Tetrahedron $P_4$.
\item Triangular subdivisions $T_{\triangle}P_n$ of all five Platonic solids.
\item Simple triangular subdivisions $S_{\triangle}P_6$ of the cube.
\item Earth map tilings $E_{\triangle}^I1$, $E_{\triangle}^J1$, $E_{\triangle}2$, and their rotation modifications $RE_{\triangle}^I1$, $RE_{\triangle}^J1$, $RE_{\triangle}2$. 
\end{itemize}
\end{proposition}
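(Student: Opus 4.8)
The plan is to follow the strategy of the previous proposition, since $raa$ shares its crucial feature: the edge $\overline{23}$ --- here the $r$-edge --- is distinguished from the other two, which here are both straight arcs $a$ of the same length. Thus the companion-pair picture of Figure \ref{pair} applies: the $r$-edges of any $raa$-tiling match the tiles into companion pairs, each pair forming the (unique) quadrilateral with four equal straight sides and opposite angles $[1]$ and $[2]+[3]$, so that every $raa$-tiling is a diagonal subdivision of a tiling by this rhombus. First I would decompose each vertex into fans with respect to the $r$-edges and, via the analogue of Figure \ref{fan}, determine the possible fans. Here one must be careful: the two non-$r$ edges are straight arcs of equal length, hence mutually indistinguishable as curves, so the directional argument that excludes fans $21\cdots12$ and $31\cdots13$ for $rr{\color{red} r'}$ and $rra$ is not available. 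I would check --- using that corners $2,3$ (carrying the $\overline{23}$-edge) cannot occur in the interior of a fan, together with the congruence constraints on how $a$-edges of the prototile can be shared --- that every vertex is still of the form $1^k2^l3^l$, so that Table \ref{avc} remains the complete anglewise vertex combination list for $raa$. The classification then splits into the cases $f=4$; $f=6,8,10,12,16,20,24,36,60$; and the families (1), (2), (3), exactly as before, using that $f$ is even, that there is a vertex of degree $3,4$ or $5$, and Lemma \ref{anglesum}.

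Then I would re-run the cases. For $f=4$ the only vertex is $123$ and the tiling is the tetrahedron $P_4$. In the cases whose only ``$1$-only'' vertex is $1^k$ ($k=3,4,5$), the tiles around such a vertex form a neighbourhood tiling $N(1^k)$, realizable because the companion quadrilateral is symmetric, so the $N(1^k)$-tiling is a Platonic solid and the triangular tiling is the triangular subdivision $T_{\triangle}P_n$ (essentially unique, the $r$-edges of the ambient Platonic solid forbidding a flip of a single face). For $f=12$ one has $[1]=[2]+[3]=\tfrac{2}{3}\pi$, the companion quadrilateral is a spherical square, the companion pairs form a quadrilateral tiling with all vertices of degree $3$ --- the cube --- and the triangular tilings are the simple triangular subdivisions $S_{\triangle}P_6$. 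For $f=36$ the companion-pair/quadrilateral argument based on Figure 34 of \cite{cly} gives a contradiction. For the families (1), (2), (3), the runs of consecutive $1$s at a vertex produce the earth-map timezone structures $E_{\triangle}^I1$, $E_{\triangle}^J1$, $E_{\triangle}2$, and the complementary hemisphere is filled by the same partial-tiling bookkeeping as in the previous proof.

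The main obstacle, I expect, is controlling the extra freedom coming from the fully symmetric straight $a$-edges, and showing it yields nothing outside the list --- both in the fan step above and in the identification of the modifications. Because $\overline{23}=r$, the tiles along each modification circle present $r$-edges there, so a reflection across such a circle would turn them into $r^{-1}$-edges: no flip modification is admissible for $raa$, while the rotations (by $\tfrac{2}{3}\pi$ and $\tfrac{4}{3}\pi$ for $E_{\triangle}^J1$ and $E_{\triangle}2$, and the corresponding rotation for $E_{\triangle}^I1$) are. This is expected to give exactly $RE_{\triangle}^I1$, $RE_{\triangle}^J1$, $RE_{\triangle}2$, with the two rotations of $E_{\triangle}^J1$ and of $E_{\triangle}2$ remaining distinct because the hemisphere exchange that identifies them for $h\bar{h}r$ and $h\bar{h}a$ reverses the $r$-edges on the boundary circle. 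A further delicate point is that in the vertex families with $[2]+[3]=\pi$ the companion rhombus degenerates to a symmetric $2$-gon with two arcs of equal length, which tiles as $E_{\triangle}^I1$, $E_{\triangle}^J1$, $E_{\triangle}2$ and their rotations, but not as $E_{\triangle}1$ or $FE_{\triangle}1$ --- these requiring the genuinely unequal arcs of a $ra{\color{red} a'}$-triangle. The systematic exclusion of all other hemisphere fillings is obtained by transcribing the $h$-/$\bar{h}$-edge contradiction arguments of the previous proof into conditions on $a$-edge adjacency and $r$-edge direction; this bookkeeping, though routine, is where the length of the proof lies.
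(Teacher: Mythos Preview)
Your approach differs from the paper's: rather than re-running the fan analysis of the previous proposition, the paper observes that the companion quadrilateral $aaaa$ is a \emph{straight-edged} rhombus and imports wholesale the classification of rhombus tilings from \cite{cly} (Proposition~23 there). Every $raa$-tiling is then a diagonal subdivision of one of those rhombus tilings (plus a separate treatment of the degree-$2$ case, which yields $P_4$, $E_{\triangle}^I1$, and its modification). This bypasses the fan bookkeeping entirely, which is a genuine simplification since the $a$-edges carry no orientation information.

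Your fan route can be made to work, but two points need correction. First, the reason every vertex is $1^k2^l3^l$ is \emph{not} any constraint on how $a$-edges are shared --- there is none, since $\overline{12}$ and $\overline{13}$ are identical straight arcs and fans $|2\,1\cdots1\,2|$, $|3\,1\cdots1\,3|$ are locally admissible. The constraint comes from the $r$-edges: across each $r$-edge the companion pairing forces one corner $2$ and one corner $3$, and summing around the vertex gives equal counts. Second, and more seriously, your analysis of the modification circles is backwards. For $raa$ the edges $\overline{12},\overline{13}$ are the $a$-edges, so the hemisphere boundaries in all three earth-map families consist of straight arcs, not $r$-edges. A flip across such a boundary is therefore perfectly admissible; it sends interior $r$-edges to $r^{-1}$, but since each rhombus may be cut by either diagonal independently, this is absorbed by the free $r\leftrightarrow r^{-1}$ exchange. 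The upshot (as the paper states) is that flip and rotation \emph{coincide} for $raa$, and the two rotations of $E_{\triangle}^J1$ and $E_{\triangle}2$ are identified by the hemisphere exchange --- the opposite of your claim that they remain distinct.
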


\begin{proof}
The eighth of Figure \ref{pair} shows a companion pair gives the quadrilateral $aaaa$. This is rhombus with straight edges. Therefore we may adopt the argument in Section 4 of \cite{cly}. 

The argument has two parts. The first part is the classification of all rhombus tilings under the assumption that all vertices have degree $\ge 3$. By Proposition 23 of \cite{cly}, rhombus tilings are the quadricentric subdivisions $C_{\square}P_n$ of Platonic solids, the earth map tiling $E_{\square}^R1$ and the flip modification $FE_{\square}^R1$. Then the $raa$-tilings are simple triangular subdivisions of these rhombus tilings, which means using $r$ or $r^{-1}$ to divide each rhombus into two $raa$-triangles. Since two divisions are different, we may get many different versions of $raa$-tilings from the same rhombus tiling.

The simple triangular subdivisions of $C_{\square}P_n$ are the simple triangular subdivisions $S_{\square}P_6$ of the square and the triangular subdivisions $T_{\triangle}P_n$ of Platonic solids, $n=6,8,12,20$. The middle and right of Figure \ref{tiling_raa1} show examples of $S_{\square}P_6$ and $T_{\triangle}P_n$. We can independently exchange any $r$-edge and $r^{-1}$-edge.

\begin{figure}[htp]
\centering
\begin{tikzpicture}[>=latex,scale=1]

\draw[xshift=-3.5cm]
	(0,0) -- (-30:1) to[out=80, in=-20] 
	(90:1) to[out=200, in=100] 
	(210:1) -- (0,0)
	(0,1) -- (0,0.7) -- (-0.1,0.6) -- (0.1,0.4) -- (0,0.3) -- (0,0);

\foreach \a in {0,1,2,3}
\draw[rotate=90*\a]
	(0.4,-0.4) -- (0.4,0.4) -- (1,1) -- (1,-1);

\foreach \a in {0,1,2,3}
\draw[xshift=3.5cm, rotate=90*\a]
	(0,0) -- (0.4,0.4) -- (0.7,0) -- (0.4,-0.4) -- (0,0)
	(1,-1) -- (0.7,0) -- (1,1) -- (1.3,1.3);


\begin{scope}
[decoration={markings,mark=at position 0.5 with {
\draw[white, thick] (0.12,0)--(-0.12,0);
\draw (0.12,0)--(0.06,0.06)--(-0.06,-0.06)--(-0.12,0);
}}]

\draw[postaction={decorate}]
	(0.4,0.4) -- (-0.4,-0.4);
\draw[postaction={decorate}]
	(0.4,0.4) -- (1,-1);
\draw[postaction={decorate}]
	(-0.4,0.4) -- (1,1);

\begin{scope}[xshift=3.5cm]

\foreach \a in {0,1,2}
{
\draw[postaction={decorate}, rotate=90*\a]
	(0.4,0.4) -- (0.4,-0.4);
\draw[postaction={decorate}, rotate=90*\a]
	(1,1) -- (-1,1);
}

\foreach \a in {1,-1}
\draw[postaction={decorate}, scale=\a]
	(0.4,0.4) -- (1,1);
	
\end{scope}
		
\end{scope}


\begin{scope}
[decoration={markings,mark=at position 0.5 with {
\draw[white, thick] (0.2,0)--(-0.2,0);
\draw (0.2,0)--(0.1,-0.1)--(-0.1,0.1)--(-0.2,0);
\filldraw[fill=white] (0,0) circle (0.03);
}}]

\foreach \a in {1,-1}
\draw[postaction={decorate}, xshift=-3.5cm]
	(210:1) to[out=-40, in=220] (-30:1);

\end{scope}

\begin{scope}
[decoration={markings,mark=at position 0.5 with {
\draw[white, thick] (0.12,0)--(-0.12,0);
\draw (0.12,0)--(0.06,-0.06)--(-0.06,0.06)--(-0.12,0);
\filldraw[fill=white] (0,0) circle (0.03);
}}]

\draw[postaction={decorate}]
	(0.4,-0.4) -- (-1,-1);
\draw[postaction={decorate}]
	(-0.4,0.4) -- (-1,-1);
\draw[postaction={decorate}]
	(1,1) -- (1.3,1.3);

\begin{scope}[xshift=3.5cm]

\draw[postaction={decorate}]
	(-0.4,-0.4) -- (0.4,-0.4);
\draw[postaction={decorate}]
	(1,1) -- (1,-1);
	
\foreach \a in {1,-1}
\draw[postaction={decorate}, scale=\a]
	(-0.4,0.4) -- (-1,1);
	
\end{scope}
		
\end{scope}


\draw
	(-1,-1) -- (-1.3,-1.3);

\node at (0,-1.5) {$S_{\triangle}P_6$};
\node at (3.5,-1.5) {$T_{\triangle}P_6$};
\node at (-3.5,-1.5) {$P_4$};
	
\end{tikzpicture}
\caption{Platonic type tilings $P_4$, $S_{\triangle}P_6$, $T_{\triangle}P_n$ for $raa$.}
\label{tiling_raa1}
\end{figure}

The simple triangular subdivisions of $E_{\square}^R1$ are the earth map tilings $E_{\triangle}^J1$ and $E_{\triangle}2$ in the middle and right of Figure \ref{tiling_raa2}. Again, any $r$ and $r^{-1}$ can be independently exchanged.

\begin{figure}[htp]
\centering
\begin{tikzpicture}[>=latex]


\foreach \a in {0,...,3}
\draw[xshift=0.8*\a cm]
	(0,1) -- (0,-1)
	(0.8,1) -- (0.8,-1);

\foreach \a in {0,2,3}
\draw[xshift=0.8*\a cm]	
	(0,0) -- (0.2,0) -- (0.3,-0.1) -- (0.5,0.1) -- (0.6,0) -- (0.8,0);

\draw[xshift=0.8 cm]	
	(0,0) -- (0.2,0) -- (0.3,0.1) -- (0.5,-0.1) -- (0.6,0) -- (0.8,0);
\filldraw[fill=white]
	(1.2,0) circle (0.05);

\node at (1.6,-1.4) {$E_{\triangle}^I1$};


\begin{scope}[xshift=4.5cm]

\draw
	(0,1) -- (0,0) -- (3.2,0) -- (3.2,1)
	(0.8,0) -- (0.8,-1)
	(1.6,0) -- (1.6,1)
	(2.4,0) -- (2.4,-1);

\draw[xshift=0.8 cm]
	(0,1) -- (0,0.7) -- (-0.1,0.6) -- (0.1,0.4) -- (0,0.3) -- (0,0);

\draw[xshift=2.4 cm]
	(0,1) -- (0,0.7) -- (0.1,0.6) -- (-0.1,0.4) -- (0,0.3) -- (0,0);
	
\foreach \a in {2,4}
\draw[xshift=0.8*\a cm]
	(0,-1) -- (0,-0.7) -- (0.1,-0.6) -- (-0.1,-0.4) -- (0,-0.3) -- (0,0);

\draw
	(0,-1) -- (0,-0.7) -- (-0.1,-0.6) -- (0.1,-0.4) -- (0,-0.3) -- (0,0);
	
\filldraw[fill=white]
	(2.4,0.5) circle (0.05)
	(0,-0.5) circle (0.05);

\node at (1.6,-1.4) {$E_{\triangle}^J1$};

\end{scope}


\begin{scope}[xshift=9cm]

\foreach \a in {0,...,4}
\draw[xshift=0.8*\a cm]
	(0.4,1) -- (0.4,0.4) -- (0,-0.4) -- (0,-1);
	
\foreach \a in {0,1,2,3}
\draw[xshift=0.8*\a cm]
	(0.4,0.4) -- (0.8,-0.4);

\foreach \a in {0,1,3}
\draw[shift={(0.4cm+0.8*\a cm,0.4cm)}]	
	(0,0) -- (0.2,0) -- (0.3,-0.1) -- (0.5,0.1) -- (0.6,0) -- (0.8,0);

\foreach \a in {1,3}
\draw[shift={(0.8*\a cm,-0.4cm)}]	
	(0,0) -- (0.2,0) -- (0.3,-0.1) -- (0.5,0.1) -- (0.6,0) -- (0.8,0);
	
\draw[shift={(2 cm,0.4cm)}]	
	(0,0) -- (0.2,0) -- (0.3,0.1) -- (0.5,-0.1) -- (0.6,0) -- (0.8,0);

\foreach \a in {0,2}
\draw[shift={(0.8*\a cm,-0.4cm)}]	
	(0,0) -- (0.2,0) -- (0.3,0.1) -- (0.5,-0.1) -- (0.6,0) -- (0.8,0);

\filldraw[fill=white]
	(2.4,0.4) circle (0.05)
	(0.4,-0.4) circle (0.05)
	(2,-0.4) circle (0.05);

\node at (1.6,-1.4) {$E_{\triangle}2$};
		
\end{scope}

\end{tikzpicture}
\caption{Earth map tilings $E_{\triangle}^I1$, $E_{\triangle}^J1$, $E_{\triangle}2$ for $raa$.}
\label{tiling_raa2}
\end{figure}

The second part of the argument in Section 4 of \cite{cly} is the case that some vertices in a triangular tiling are changed to degree 2 vertices in the corresponding rhombus tiling. By the same argument in \cite{cly}, we know the tiling is either the tetrahedron $P_4$ on the left of Figure \ref{tiling_raa1}, or the earth map tiling $E_{\triangle}^I1$ on the left of Figure \ref{tiling_raa2}, or the flip modification $FE_{\triangle}^I1$. The flip modification is actually the same as the flip modification $FE_{\triangle}^I1$ in Figure \ref{tiling_hha14A}, and the rotation modification $RE_{\triangle}^I1$ in Figure \ref{tiling_hha14B}. We need to distinguish between the flip $F$ and the rotation $R$ in the earlier tilings due to different symmetry properties of $h$ and $r$. For $raa$, the boundary circle consists of straight lines, and the flip is actually the same as the rotation (up to the free exchange of $r$ and $r^{-1}$ in the interiors of hemisphere tilings). See the left of Figure \ref{tiling_rra3}, where we choose to use $R$ instead of $F$. 

\begin{figure}[htp]
\centering
\begin{tikzpicture}[>=latex]
	
\foreach \a in {0,...,3}
\draw
	(90*\a:1.2) -- (90+90*\a:1.2);

\foreach \a in {0,...,5}
\foreach \x in {1,2}
\draw[xshift=3.5*\x cm]
	(-30+60*\a:1.2) -- (30+60*\a:1.2);


\node at (0,0.9) {\tiny $1^q$};
\node at (0.9,0) {\tiny $23$};
\node at (0,-0.9) {\tiny $1^q$};
\node at (-0.9,0) {\tiny $23$};

\node at (0,1.35) {\tiny $1^q$};
\node at (1.4,0) {\tiny $23$};
\node at (0,-1.35) {\tiny $1^q$};
\node at (-1.4,0) {\tiny $23$};

\draw[gray!50, ->]
	(0:0.5) arc (0:90:0.5);

\node at (-0.15,0.5) {$R$};

\node at (0,-1.8) {$RE_{\triangle}^I1$};


\begin{scope}[xshift=3.5 cm]

\node at (90:0.9) {\tiny $(23)^q$};
\node at (30:0.9) {\tiny 123};
\node at (150:0.9) {\tiny 123};
\node at (210:1) {\tiny 1};
\node at (-30:1) {\tiny 1};
\node at (-85:0.85) {\tiny $(23)^{q+1}$};

\node at (90:1.4) {\tiny $(23)^{q+1}$};
\node at (30:1.3) {\tiny 1};
\node at (150:1.3) {\tiny 1};
\node at (210:1.4) {\tiny 123};
\node at (-30:1.4) {\tiny 123};
\node at (-90:1.4) {\tiny $(23)^q$};

\draw[->, gray!50]
	(0:0.4) arc (0:120:0.4);
\draw[->, gray!50]
	(0:0.6) arc (0:240:0.6);	

\node at (-0.35,0.2) {$R$};
\node at (-0.15,-0.5) {$R$};

\node at (0,-1.8) {$RE_{\triangle}^J1$};

\end{scope}


\begin{scope}[xshift=7 cm]

\node at (90:1) {\tiny $1^q$};
\node at (30:0.9) {\tiny 123};
\node at (150:0.9) {\tiny 123};
\node at (210:1) {\tiny 23};
\node at (-30:1) {\tiny 23};
\node at (-85:0.9) {\tiny $1^{q+1}$};

\node at (90:1.4) {\tiny $1^{q+1}$};
\node at (30:1.4) {\tiny 23};
\node at (150:1.4) {\tiny 23};
\node at (210:1.4) {\tiny 123};
\node at (-30:1.4) {\tiny 123};
\node at (-90:1.4) {\tiny $1^q$};

\draw[->, gray!50]
	(0:0.4) arc (0:120:0.4);
\draw[->, gray!50]
	(0:0.6) arc (0:240:0.6);	

\node at (-0.35,0.2) {$R$};
\node at (-0.15,-0.5) {$R$};

\node at (0,-1.8) {$RE_{\triangle}2$};

\end{scope}

\end{tikzpicture}
\caption{Rotation modifications $RE_{\triangle}^I1$, $RE_{\triangle}^J1$, $RE_{\triangle}2$ for $raa$.}
\label{tiling_rra3}
\end{figure}

The simple triangular subdivisions of the flip modification $FE_{\square}^R1$ are the flip modifications $FE_{\triangle}^J1$ and $FE_{\triangle}2$. They are illustrated by the middle and right of Figure \ref{tiling_rra3}, and are the same as the rotations in Figures \ref{tiling_hha9}, \ref{tiling_hha10} and \ref{tiling_hha13}. Again the flip is actually the same as the rotation (up to the free exchange of $r$ and $r^{-1}$), and we choose to use $R$ instead of $F$ in Figure \ref{tiling_rra3}. 
\end{proof}

   \bigskip
\subsection*{Acknowledgments}
This work was started in 2023 as an Undergraduate Research Project in the Tiling Studio of Zhejiang Normal University, guided by the third and the last author. After the other authors graduated in 2024, they continued the project until a full classification was obtained.  The bird prototile in Figure \ref{bird} was designed by Luoning Zhu, another junior student joining the team later specially for interesting drawings. The authors would like to thank her for several beautiful drawings during the project.

\end{document}